\documentclass[11pt, reqno]{amsart}
\allowdisplaybreaks[3]
\usepackage[left=3cm,right=3cm,top=2cm,bottom=2cm]{geometry}

%Document class option draft [11pt,regno,draft] is preferable if the paper does not contain graphics. Using this option all overfulls are marked with a thick black bar making them visible. If the paper contains graphics this option will produce blank placeholder boxes instead the graphics.

\usepackage{amsthm}
\usepackage{amscd}
\usepackage{amsmath}
\usepackage{amsfonts}
\usepackage{amssymb}
\usepackage{graphicx}
\usepackage{amsopn}
\usepackage[usenames]{color}
\usepackage{tikz}
\usetikzlibrary{arrows,shapes,snakes,automata,backgrounds,petri}
\usepackage{textcomp}
\usepackage[all]{xy}

\theoremstyle{theorem}
\newtheorem{theorem}{Theorem}
\newtheorem{corollary}[theorem]{Corollary}
\newtheorem{lemma}[theorem]{Lemma}
\newtheorem{prop}[theorem]{Proposition}

\theoremstyle{definition}
\newtheorem{definition}[theorem]{Definition}

\newtheorem{fact}[theorem]{Fact}
\newtheorem{remark}[theorem]{Remark}
\newtheorem{Question}[theorem]{Question}
\newtheorem{construction}[theorem]{Construction}

\def\D{\mathcal{D}}
\def\M{\mathcal{M}}
\def\R{\mathcal{R}}
\def\X{\mathcal{X}}
\def\C{\mathcal{C}}
\def\A{\mathcal{A}}
\def\I{\mathcal{I}}
\def\K{\mathcal{K}}
\def\V{\mathcal{V}}
\def\P{\mathcal{P}}
\def\N{\mathcal{N}}
\def\T{\mathcal{T}}
\def\U{{\rm U}}

\def\B{\mathcal{B}}
\def\S{\Sigma}
\def\Mod{{\rm Mod}}

\def\Ind{{\rm Ind}}

\def\PMod{{\rm PMod}}
\def\Stab{{\rm Stab}}
\def\Orb{{\rm Orb}}
\def\PB{{\rm PB}}

\def\id{{\rm id}}
\def\Z{\mathbb{Z}}
\def\Q{\mathbb{Q}}

\def\Surger{{\rm Surger}}
\def\Drain{{\rm Drain}}
\def\Homeo{{\rm Homeo}}
\def\H{{\rm H}}
\def\Sp{{\rm Sp}}
\def\SL{{\rm SL}}

\def\U{{\rm U}}

\newcommand{\rk}{\mathop{\mathrm{rk}}}
\newcommand{\cd}{\mathop{\mathrm{cd}}}
\newcommand{\Mat}{\mathop{\mathrm{Mat}}}
\newcommand{\sign}{\mathop{\mathrm{sign}}}

\numberwithin{theorem}{section}

\begin{document}

\title{On the structure of the top homology group of the Johnson kernel}
\thanks{I was partially supported by the HSE University Basic Research Program and by the Simons Foundation. \smallskip \\
2010 Mathematics Subject Classification. 20F34 (Primary); 20F36, 57M07, 20J05 (Secondary)}

\address{National Research University Higher School of Economics, Russian Federation}
\address{Skolkovo Institute of Science and Technology, Skolkovo, Russia}
\email{spiridonovia@ya.ru}
\author{Igor A. Spiridonov}

\keywords{}

\thispagestyle{empty}

\vspace{0.5cm}

\begin{abstract}
	
The Johnson kernel is the subgroup $\mathcal{K}_g$ of the mapping class group ${\rm Mod}(\Sigma_{g})$ of a genus $g$ oriented closed surface $\Sigma_{g}$ generated by all Dehn twists about separating curves. In this paper we study the structure of the top homology group ${\rm H}_{2g-3}(\mathcal{K}_g, \mathbb{Z})$. For any collection of $2g-3$ disjoint separating curves on $\Sigma_{g}$ one can construct the corresponding abelian cycle in the group ${\rm H}_{2g-3}(\mathcal{K}_g, \mathbb{Z})$; such abelian cycles will be called simple. In this paper we describe the structure of $\mathbb{Z}[{\rm Mod}(\Sigma_{g})/ \mathcal{K}_g]$-module on the subgroup of ${\rm H}_{2g-3}(\mathcal{K}_g, \mathbb{Z})$ generated by all simple abelian cycles and find all relations between them.
\end{abstract}
\maketitle

\section{Introduction}

Let $\S_{g}$ be a compact oriented genus $g$ surface. Let $\Mod(\S_{g}) = \pi_{0}(\Homeo^{+}(\S_{g}))$ be the \textit{mapping class group} of $\S_{g}$, where $\Homeo^{+}(\S_{g})$ is the group of orientation-preserving homeomorphisms of $\S_{g}$. The group $\Mod(\S_{g})$ acts on $\H = \H_1(\S_g, \Z)$. This action preserves the algebraic intersection form, so we have the representation $\Mod(\S_{g}) \rightarrow \Sp(2g, \Z)$, which is well-known to be surjective. The kernel $\I_g$ of this representation is known as the \textit{Torelli group}. This can be written as the short exact sequence 
\begin{equation} \label{exact1}
1 \rightarrow \I_g \rightarrow \Mod(\S_{g}) \rightarrow \Sp(2g, \Z) \rightarrow 1.
\end{equation}

The \textit{Johnson kernel} $\K_g$ is the subgroup of $\I_g$ generated by all Dehn twists about separating curves. Johnson \cite{Johnson2} proved that the group $\K_g$ also can be defined as the kernel of the surjective \textit{Johnson homomorphism} $\tau: \I_g \rightarrow \wedge^{3} \H / \H$,  where the inclusion $\H \hookrightarrow \wedge^{3} \H$ is given by $x \mapsto x \wedge \Omega$ and $\Omega \in \wedge^2\H$ is the inverse tensor of the algebraic intersection form. Therefore, we have the short exact sequence
\begin{equation} \label{exact2}
1 \rightarrow \K_g \rightarrow \I_g \rightarrow \wedge^{3} \H / \H \rightarrow 1.
\end{equation}

Denote by $\mathcal{G}_g$ the quotient group $\Mod(\S_g) / \K_g$. The exact sequences (\ref{exact1}) and (\ref{exact2}) imply that $\mathcal{G}_g$ can be presented as the following extension 
$$
1 \rightarrow \wedge^{3} \H / \H \rightarrow \mathcal{G}_g \rightarrow \Sp(2g, \Z) \rightarrow 1
$$
of the symplectic group by the free abelian group $\wedge^{3} \H / \H$. The groups $\H_*(\K_g, \Z)$ has the natural structure of $\mathcal{G}_g$-module.

In the case $g=1$ the representation $\Mod(\S_{1}) \rightarrow \Sp(2, \Z) = \SL(2, \Z)$ is an isomorphism, so the group $\I_1$ is trivial. Mess \cite{Mess} proved that the group $\I_2 = \K_2$ is a free group with countable number of generators. Therefore, below we assume that $g \geq 3$ unless explicitly stated otherwise.

A natural problem is to study the homology of the group $\K_g$ for $g \geq 3$. The rational homology group $\H_{1}(\K_g, \Q)$ was shown to be finitely generated for $g \geq 4$ by Dimca and Papadima \cite{Dimca2}. This group was computed explicitly for $g \geq 6$ by Morita, Sakasai, and Suzuki \cite{Morita} using the description due to Dimca, Hain, and Papadima \cite{Dimca1}. Recently Ershov and Sue He \cite{Ershov} proved that $\K_g$ is finitely generated in the case 
$g \geq 12$.
This result was extended to any genus $g \geq 4$ by Church, Ershov, and Putman \cite{Church2}. This implies that the group $\H_1(\K_g, \Z)$ is finitely generated, provided that $g \geq 4$.
It is still unknown whether $\K_3$ and $\H_1(\K_3, \Z)$ are finitely generated.

Bestvina, Bux, and Margalit \cite{Bestvina} computed the cohomological dimension of the Johnson kernel $\cd(\K_g) = 2g-3$. Gaifullin \cite{Gaifullin_J} proved that the top homology group $\H_{2g-3}(\K_g, \Z)$ contains a free $\Z[\wedge^{3} \H / \H]$-module of infinite rank. In particular, the group $\H_{2g-3}(\K_g, \Z)$ is not finitely generated.

Recall that for $n$ pairwise commuting elements $h_{1}, \dots, h_{n}$ of the group $G$ one can construct the \textit{abelian cycle} $\A(h_{1}, \dots, h_{n}) \in \H_{n}(G, \Z)$ defined as follows. Consider the homomorphism $\phi: \Z^{n} \rightarrow G$ that maps the generator of the $i^{\rm th}$ factor to $h_{i}$. Then $\A(h_{1}, \dots, h_{n}) = \phi_{*}(\mu_{n})$, where $\mu_{n}$ is the standard generator of $\H_{n}(\Z^{n}, \Z)$.

By a \textit{curve} we always mean an essential simple closed curve on $\S_{g}$. By an (oriented) \textit{multicurve} we mean a finite union of pairwise disjoint and nonisotopic (oriented) curves on $\S_{g}$. An \textit{ordered multicurve} is a multicurve with a fixed order on its components. Usually we will not distinguish between a curve or a multicurve and its isotopy class.
We denote by $T_{\gamma}$ the left Dehn twist about a curve $\gamma$.

\begin{definition}
    An \textit{S-multicurve} is an ordered multicurve consisting of $2g-3$ separating components.
\end{definition}

\begin{figure}[h]
	\scalebox{3.5}{
		\begin{tikzpicture}
	
		\draw[] (0, -1) to [out=180, in=337.5] (-0.574, -1.385);
		\draw[] (-0.574, -1.385) to [out=157.5, in=315] (-0.707, -0.707);

		\draw[] (-0.707, -0.707) to [out=135, in=292.5] (-1.385, -0.574);
		\draw[] (-1.385, -0.574) to [out=112.5, in=270] (-1, 0);

		\draw[] (-1, 0) to [out=90, in=247.5] (-1.385, 0.574);
		\draw[] (-1.385, 0.574) to [out=67.5, in=235] (-0.707, 0.707);

		\draw[] (-0.707, 0.707) to [out=45, in=202.5] (-0.574, 1.385);
		\draw[] (-0.574, 1.385) to [out=22.5, in=180] (0, 1);

		\draw[dotted][] (0, 1) to [out=0, in=135] (0.707, 0.707);

		\draw[] (0.707, 0.707) to [out=315, in=112.5] (1.385, 0.574);
		\draw[] (1.385, 0.574) to [out=292.5, in=90] (1, 0);

		\draw[] (1, 0) to [out=270, in=67.5] (1.385, -0.574);
		\draw[] (1.385, -0.574) to [out=247.5, in=45] (0.707, -0.707);

		\draw[] (0.707, -0.707) to [out=235, in=22.5] (0.574, -1.385);
		\draw[] (0.574, -1.385) to [out=202.5, in=0] (0, -1);

		\draw[] (1.108, 0.459) circle (0.1);
		\draw[] (-1.108, -0.459) circle (0.1);
		\draw[] (-1.108, 0.459) circle (0.1);
		\draw[] (1.108, -0.459) circle (0.1);

		\draw[] (-0.459, -1.108) circle (0.1);
		\draw[] (-0.459, 1.108) circle (0.1);
		\draw[] (0.459, -1.108) circle (0.1);

		\draw[red][very thin] (-0.1, -1.02) to [out=145, in=350] (-0.65, -0.8);
		\draw[red][very thin][dashed] (-0.1, -1.02) to [out=170, in=325] (-0.65, -0.8);
		
		\draw[red][very thin] (-0.8, -0.65) to [out=100, in=305] (-1.02, -0.1);
		\draw[red][very thin][dashed] (-0.8, -0.65) to [out=125, in=280] (-1.02, -0.1);
		
		\draw[red][very thin] (-1.02, 0.1) to [out=55, in=280] (-0.8, 0.65);
		\draw[red][very thin][dashed] (-1.02, 0.1) to [out=90, in=235] (-0.8, 0.65);
		
		\draw[red][very thin] (-0.65, 0.8) to [out=10, in=215] (-0.1, 1.02);
		\draw[red][very thin][dashed] (-0.65, 0.8) to [out=35, in=190] (-0.1, 1.02);

		\draw[red][very thin] (0.8, 0.65) to [out=280, in=125] (1.02, 0.1);
		\draw[red][very thin][dashed] (0.8, 0.65) to [out=305, in=100] (1.02, 0.1);
		
		\draw[red][very thin] (1.02, -0.1) to [out=235, in=80] (0.8, -0.65);
		\draw[red][very thin][dashed] (1.02, -0.1) to [out=260, in=55] (0.8, -0.65);
		
		\draw[red][very thin] (0.67, -0.8) to [out=190, in=35] (0.1, -1.02);
		\draw[red][very thin][dashed] (0.67, -0.8) to [out=215, in=10] (0.1, -1.02);

		\draw[blue][very thin] (-0.07, -1.01) to [out=125, in=325] (-1, 0);
		
		\draw[blue][very thin] (-0.035, -1.005) to [out=102.5, in=302.5] (-0.707, 0.707);
		
		\draw[blue][very thin] (0, -1) to [out=80, in=280] (0, 1);
		
		\draw[blue][very thin] (0.035, -1.005) to [out=57.5, in=257.5] (0.707, 0.707);
		
		\draw[blue][very thin] (0.07, -1.01) to [out=35, in=235] (1, 0);

		\draw[blue][very thin][dashed] (-0.07, -1.01) to [out=145, in=305] (-1, 0);
		
		\draw[blue][very thin][dashed] (-0.035, -1.005) to [out=122.5, in=282.5] (-0.707, 0.707);
		
		\draw[blue][very thin][dashed] (0, -1) to [out=100, in=260] (0, 1);

		\draw[blue][very thin][dashed] (0.035, -1.005) to [out=77.5, in=237.5] (0.707, 0.707);
		
		\draw[blue][very thin][dashed] (0.07, -1.01) to [out=55, in=215] (1, 0);
		
		\node[blue][scale=0.3] at (-0.8, -0.05) {$\epsilon_{2}$};
		\node[blue][scale=0.3] at (-0.5, 0.5) {$\epsilon_{3}$};
		\node[blue][scale=0.3] at (0.12, 0.65) {$\epsilon_{4}$};
		\node[blue][scale=0.3] at (0.5, 0.6) {$\epsilon_{g-3}$};
		\node[blue][scale=0.3] at (0.78, -0.05) {$\epsilon_{g-2}$};
		
		\node[red][scale=0.3] at (-0.57, -0.96) {$\delta_{1}$};
		\node[red][scale=0.3] at (-0.96, -0.57) {$\delta_{2}$};
		\node[red][scale=0.3] at (-0.96, 0.57) {$\delta_{3}$};
		\node[red][scale=0.3] at (-0.57, 0.96) {$\delta_{4}$};
		
		\node[red][scale=0.3] at (1.06, 0.31) {$\delta_{g-2}$};
		\node[red][scale=0.3] at (1.09, -0.29) {$\delta_{g-1}$};
		\node[red][scale=0.3] at (0.57, -0.96) {$\delta_{g}$};
		
		\end{tikzpicture}}
	\caption{Surface $\S_{g}$.}
	\label{S-curve_1}
\end{figure}

For example, the multicurve $\delta_{1} \cup \dots \cup \delta_{g} \cup \epsilon_{2} \cup \dots \cup \epsilon_{g-2}$ in Fig. \ref{S-curve_1} is an S-multicurve.
To an S-multicurve $M = \gamma_{1} \cup \dots \cup \gamma_{2g-3}$ we assign the abelian cycle $\A(M) = \A(T_{\gamma_{1}}, \dots, T_{\gamma_{2g-3}}) \in \H_{2g-3}(\K_g, \Z)$. If an S-multicurve $M'$ is obtained from $M$ by a permutation $\pi$ of its components, then $\A(M') = (\sign \pi) \A(M)$.
An easy computation of the Euler characteristic implies that any S-multicurve separates $\S_g$ into $g$ one-punctured tori and $g-3$ three punctured spheres. Throughout the paper we assume that the components of any S-multicurve are ordered such that the curves with numbers $1, \dots, g$ bound one-punctured tori from $\S_g$.

Abelian cycles of the form $\A(M) \in \H_{2g-3}(\K_g, \Z)$ 
for some S-multicurve $M$ will be called \textit{simple abelian cycles}.
Denote by $\A_g \subseteq \H_{2g-3}(\K_g, \Z)$ the subgroup generated by all simple abelian cycles. The author does not know the answer to the following question, which is an interesting problem itself. 

\begin{Question}
	Is the inclusion  $\A_g \subseteq \H_{2g-3}(\K_g, \Z)$ strict for some $g \geq 3$?
\end{Question}

The natural problem is to study the structure of the group $\A_g$. 
Let us identify $\S_{g}$ with the surface shown in the Fig. \ref{S-curve_1} and fix the multicurve $\Delta = \delta_{1} \cup \dots \cup \delta_{g} \cup \epsilon_{2} \cup \dots \cup \epsilon_{g-2}$ on $\S_{g}$. Denote by $\P_g \subseteq \A_g$ the subgroup generated by all simple abelian cycles $\A(M)$, where $M = \delta_1 \cup  \dots \cup \delta_g \cup \epsilon'_2 \cup \dots \cup \epsilon'_{g-2}$ for some separating curves $\epsilon'_2, \dots, \epsilon'_{g-2}$.

By an \textit{unordered symplectic splitting} we mean an orthogonal (w.r.t. the intersection form) decomposition of $\H$ into a direct sum of $g$ subgroups of rank $2$.
Denote $N=\delta_1 \cup \dots \cup \delta_g$ and for each $i$ let $X_i$ be the one-punctured tori bounded by $\delta_i$. Denote $V_i = \H_{1}(X_i, \Z) \subset \H$. Consider the corresponding unordered symplectic splitting $\H = \oplus_i V_i$. The group $\Sp(2g, \Z)$ acts on the set of all unordered symplectic splittings. Denote by $\mathcal{H}_g = \SL(2, \Z)^{\times g} \rtimes S_{g}$ the stabilizer of the unordered splitting $\V = \{V_1, \dots, V_n\}$ in $\Sp(2g, \Z)$, where $S_{g}$ is the symmetric group.
 
The group $\Stab_{\Mod(\S_g)} (N)$ preserves the splitting $\V$, therefore the image of the natural homomorphism $\Stab_{\Mod(\S_g)} (N) \to \Sp(2g, \Z)$ coincides with $\mathcal{H}_g$.
Consider the corresponding mapping $\eta: \Stab_{\Mod(\S_g)} (N) \twoheadrightarrow \mathcal{H}_g$. We check in Proposition \ref{prop} that $\ker (\eta) \subseteq \K_g$, so 
we have the commutative diagram 
\begin{equation}\label{maincd}
\scalebox{1}{
\xymatrix{
	1 \ar[r] & \K_g \ar[r] & \Mod(\S_g) \ar[r] & \mathcal{G}_g \ar@{->>}[rd] \ar[r] & 1\\
	&&&& \Sp(2g, \Z)  \\
	1 \ar[r] & \Stab_{\K_g} (N) \ar[r] \ar@{^{(}->}[uu] & \Stab_{\Mod(\S_g)} (N) \ar[r]^(0.7){\eta} \ar@{^{(}->}[uu] & \mathcal{H}_g \ar[r] \ar@{^{(}-->}[uu] \ar@{^{(}->}[ru] & 1
}
}
\end{equation}
Therefore we have the maps $\mathcal{H}_g \to \mathcal{G}_g$. Since the inclusion $\mathcal{H}_g \hookrightarrow \Sp(2g, \Z)$ is passing through $
\mathcal{G}_g$, then we have the inclusion $\mathcal{H}_g  \hookrightarrow \mathcal{G}_g$.
The second row of (\ref{maincd}) implies that there is the action of the group $\mathcal{H}_g = \SL(2, \Z)^{\times g} \rtimes S_{g}$ on $\P_g$. The action of $\SL(2, \Z)^{\times g}$ is trivial, therefore $\P_g$ is an $S_g$-module.
The first part of the main result is as follows.

\begin{theorem} \label{main1}
	There is an isomorphism of $\mathcal{G}_g$-modules $$\A_g \cong \Ind^{\mathcal{G}_g}_{\SL(2, \Z)^{\times g} \rtimes S_{g}} \P_{g}.$$
\end{theorem}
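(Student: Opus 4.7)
The plan is to construct mutually inverse $\mathcal{G}$-equivariant maps between $\Ind^{\mathcal{G}}_{\mathcal{H}} \P_g$ and $\A_g$. Since the second row of (\ref{maincd}) exhibits $\mathcal{H}$ as a subgroup of $\mathcal{G}$ whose action on $\A_g$ preserves $\P_g$, the inclusion $\P_g \hookrightarrow \A_g$ is $\mathcal{H}$-equivariant. By Frobenius reciprocity it extends uniquely to a canonical $\mathcal{G}$-equivariant map
$$
\Phi\colon \Ind^{\mathcal{G}}_{\mathcal{H}} \P_g \longrightarrow \A_g, \qquad \Phi(g \otimes p) = g \cdot p.
$$

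Surjectivity of $\Phi$ follows from a change-of-coordinates argument. For any S-multicurve $M$ with components $\gamma_1, \dots, \gamma_{2g-3}$ ordered so that $\gamma_1, \dots, \gamma_g$ bound one-punctured tori, the ordered multicurve $\gamma_1 \cup \dots \cup \gamma_g$ has the same topological type as $N = \delta_1 \cup \dots \cup \delta_g$, so the classification of multicurves up to orientation-preserving homeomorphism provides $f \in \Mod(\S_g)$ with $f(\gamma_i) = \delta_i$ for $i = 1, \dots, g$. Then $f(M)$ is an S-multicurve with $\delta$-part $N$, hence $\A(f(M)) \in \P_g$ and $\A(M) = \Phi\bigl([f]^{-1} \otimes \A(f(M))\bigr)$.

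For injectivity, the plan is to construct an inverse $\Psi\colon \A_g \to \Ind^{\mathcal{G}}_{\mathcal{H}} \P_g$ on the generators $\A(M)$ by $\Psi(\A(M)) = [f]^{-1} \otimes \A(f(M))$, with $f$ chosen as above. Independence of the choice of $f$ holds because two admissible choices differ on the right by an element of $\Stab_{\Mod(\S_g)}(N)$, whose class in $\mathcal{G}$ lies in $\mathcal{H}$ by (\ref{maincd}); the defining relation $g h \otimes p = g \otimes h \cdot p$ of the induced module absorbs this ambiguity.

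The main obstacle is to show that $\Psi$ descends from the free abelian group on all simplest abelian cycles to $\A_g$ itself, i.e., that any relation $\sum c_i \A(M_i) = 0$ holding in $\H_{2g-3}(\K_g, \Z)$ yields $\sum c_i \Psi(\A(M_i)) = 0$ in $\Ind^{\mathcal{G}}_{\mathcal{H}} \P_g$. The natural strategy is to group the $M_i$ according to the coset of $\Psi(\A(M_i))$ and show that such a relation decomposes coset-by-coset. This in turn is equivalent to the statement that the sum $\A_g = \sum_{[g] \in \mathcal{G}/\mathcal{H}} g \cdot \P_g$ is direct, or equivalently that simplest abelian cycles whose $\delta$-parts lie in distinct $\mathcal{H}$-orbits on ordered $g$-tuples of separating curves give linearly independent families of classes in $\H_{2g-3}(\K_g, \Z)$. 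Establishing this independence is the heart of the argument, and is natural to approach either via Gaifullin's embedding of free $\Z[\wedge^3 \H / \H]$-submodules of the top homology, or via top-dimensional cells of the Bestvina--Bux--Margalit CW model for $B\K_g$, where maximal curve systems with distinct $\delta$-parts index disjoint families of top cells. Granting this directness, the identities $\Phi \circ \Psi = \id$ and $\Psi \circ \Phi = \id$ follow on generators by inspection.
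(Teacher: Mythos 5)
Your reduction coincides with the paper's: your surjectivity argument is exactly Proposition \ref{AC}, and your observation that everything hinges on the directness of the sum $\A_g=\sum_{s}\hat{f}_s\cdot\P_g$ is exactly the paper's Proposition \ref{th2}. But the proposal stops precisely where the paper's real work begins --- you write ``granting this directness'' --- and neither of the two routes you sketch would deliver it. Gaifullin's theorem makes only the cyclic piece generated by the single class $\A_{\T_0}$ free over $\Z[\wedge^{3}\H/\H]$; since $\P_g$ has rank $(g-2)!$, freeness of the translates of one generator says nothing about linear independence of the full families $\hat{f}_s\cdot\P_g$ in $\H_{2g-3}(\K_g,\Z)$. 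And the Bestvina--Bux--Margalit complex of cycles $\B_g(x)$ is not a CW model for $B\K_g$: the $\K_g$-action has enormous stabilizers, so there are no ``top cells indexing classes.'' The classes in question live in the vertex column $E^1_{0,2g-3}=\bigoplus_{\sigma}\H_{2g-3}(\Stab_{\K_g}(\sigma),\Z)$ of the equivariant-homology spectral sequence, and injectivity is extracted from the dimension bound $\dim(\sigma)+\cd(\Stab_{\K_g}(\sigma))\leq 2g-3$, which kills every differential into that corner, giving $E^1_{0,2g-3}=E^{\infty}_{0,2g-3}$ (Proposition \ref{prop1}).

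More seriously, even that machinery cannot by itself separate all the translates, because $\B_g(x)$ is built from nonseparating curves, while every component of $N$ is separating, hence null-homologous and invisible to $\B_g(x)$. The paper must pass from $\hat{u}\hat{h}_r\cdot N$ to auxiliary nonseparating multicurves $A_r$, and these depend only on the symplectic splitting: the complex of cycles therefore distinguishes only the $\Sp(2g,\Z)/\mathcal{H}$-direction (Lemma \ref{lemma}, which already needs the delicate choice of the homology class $x$ in Proposition \ref{prop_dec}), whereas for fixed $r$ all translates by $u\in\wedge^{3}\H/\H$ land in the same stabilizer $\Stab_{\K_g}(A_r)$. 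Separating those (Lemma \ref{4lemma}) forces the paper to construct a genuinely new object --- the complex of relative cycles $\B_{0,2g}$ of arcs on the $2g$-punctured sphere --- and to prove its contractibility (Theorem \ref{contr}), the no-rotations property of the $\PMod(\S_{0,2g})$-action, and the stabilizer dimension bound (Theorem \ref{stabdim}), finishing with a Johnson-homomorphism computation to show the relevant arc systems lie in distinct orbits. This entire layer, which occupies Sections 4 and 5 and is the technical heart of the result, is absent from your proposal; so the step you yourself correctly identify as ``the heart of the argument'' remains unproved (and, as a minor point, your $\Psi$ also needs the sign bookkeeping for reorderings of components, which you suppress).
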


In order to describe the $S_g$-module $\P_{g}$ we need to introduce some notation. 
Denote by $\mathbf{T}_{g}$ the set of trees $\T$ with the following properties.\\
(1) $\T$ has $g$ leaves (vertices of degree one) marked by $1, \dots, g$. \\
(2) Degrees of all other vertices of $\T$ equal $3$.\\
We consider such trees up to an isomorphism preserving marking of the leaves. One can prove that $|\mathbf{T}_{g}| = 1 \cdot 3 \cdot 5 \cdot \dots \cdot (2g-5)$. For example, $\mathbf{T}_{3}$ consists of a single element. 

For each S-multicurve $M$ we consider the \textit{dual tree} $\T(M)$ i.e. the graph that has a vertex for each connected component of $\S_{g} \setminus M$ and two vertices are adjacent if and only if the corresponding connected components are adjacent to each other. Since each component of $M$ is separating it follows that $\T(M)$ is a tree. The tree $\T(M)$ has $g$ leaves corresponding to one-punctured tori; degrees of all other vertices equal $3$. By definition components of $M$ are ordered, so we also have the order on the set of curves that bound one-punctured tori on $\S_{g}$. Each leaf of $\T(M)$ corresponds to a component of $M$ with a number from $1$ to $g$, therefore the leaves of $\T(M)$ are numbered from $1$ to $g$. Hence, $\T(M)$ is an element of $\mathbf{T}_{g}$. For example, for the multicurve $\Delta$ in Fig. \ref{S-curve_1}, its dual tree $\T_{0} = \T(\Delta)$ is shown in Fig. \ref{Tree}.

\begin{figure}[h]
	\scalebox{1.5}{
		\begin{tikzpicture}
		\draw[] (0, 0) to (1, 0);
		\draw[] (1, 0) to (2, 0);
		\draw[][dotted] (2, 0) to (3, 0);
		\draw[] (3, 0) to (4, 0);
		\draw[] (4, 0) to (5, 0);
		
		\draw[] (1, 0) to (1, 1);
		\draw[] (2, 0) to (2, 1);
		\draw[] (3, 0) to (3, 1);
		\draw[] (4, 0) to (4, 1);
		
		\fill[black]  (0, 0) circle [radius=2pt];
		\fill[black]  (1, 0) circle [radius=2pt];
		\fill[black]  (2, 0) circle [radius=2pt];
		\fill[black]  (3, 0) circle [radius=2pt];
		\fill[black]  (4, 0) circle [radius=2pt];
		\fill[black]  (5, 0) circle [radius=2pt];
		\fill[black]  (1, 1) circle [radius=2pt];
		\fill[black]  (2, 1) circle [radius=2pt];
		\fill[black]  (3, 1) circle [radius=2pt];
		\fill[black]  (4, 1) circle [radius=2pt];
		
		\node[scale = 0.7][left] at (0, 0) {$1$};
		\node[scale = 0.7][above] at (1, 1) {$2$};
		\node[scale = 0.7][above] at (2, 1) {$3$};
		\node[scale = 0.7][above] at (3, 1) {$g-2$};
		\node[scale = 0.7][above] at (4, 1) {$g-1$};
		\node[scale = 0.7][right] at (5, 0) {$g$};
		
		\end{tikzpicture}}
	\caption{Dual tree $\T_{0} = \T(\Delta)$ to the S-multicurve $\Delta$.}
	\label{Tree}
\end{figure}
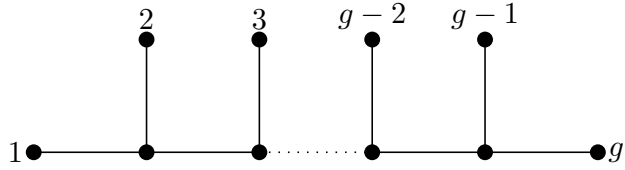

Recall that we have the fixed curves $\delta_{1}, \dots, \delta_{g}$ on $\S_{g}$ as in Fig. \ref{S-curve_1}.
For each $\T \in \mathbf{T}_{g}$ we can find a multicurve $\xi_{2} \cup \dots \cup \xi_{g-2}$ disjoint from $\delta_{1}, \dots, \delta_{g}$ and consisting of separating components such that $\T$ is the dual tree to the multicurve $\Delta_{\T} = \delta_{1}\cup \dots \cup \delta_{g} \cup \xi_{2} \cup \dots \cup \xi_{g-2}$. Such a multicurve $\xi_{2} \cup \dots \cup \xi_{g-2}$ is not unique, but we will prove that all such multicurves $\delta_{1}\cup \dots \cup \delta_{g} \cup \xi_{2} \cup \dots \cup \xi_{g-2}$ belong to the same $\K_g$-orbit, see Proposition \ref{correct}. Therefore the simple abelian cycle $\A_{\T} = \A(\Delta_{\T}) \in \H_{2g-3}(\K_g, \Z)$ is defined uniquely up to a sign. The sign of $\A_\T$ depends on the ordering of the curves $\xi_{2}, \dots, \xi_{g-2}$.

Let $h_{1} = 1, h_{2}, h_{3},  \dots \in \Sp(2g, \Z)$ be representatives of all left cosets $\Sp(2g, \Z) / \mathcal{H}_g$ and let $\hat{h}_1, \hat{h}_2, \hat{h}_3 \dots \in \Mod(\S_{g})$ be mapping classes that go to $h_1, h_2, h_3 \dots$ under the natural surjetive homomorphism $\Mod(\S_{g}) \twoheadrightarrow \Sp(2g, \Z)$. Gaifullin \cite[Theorem 1.3]{Gaifullin_J} proved that the abelian cycles 
$$\hat{h}_s\cdot \A_{\T_{0}}, \; \; \; s = 1, 2, 3, \dots$$
form a basis of a free $\Z[\wedge^{3} \H / \H]$-submodule of $\H_{2g-3}(\K_g, \Z)$. In particular, these simple abelian cycles are nonzero and generate a free abelian group.

\begin{definition}
	A triple of trees $\{\T_{1}, \T_{2}, \T_{3}\} \subseteq \mathbf{T}_{g}$ is called \textit{cyclic} if they differ only as shown in Fig. \ref{cyclic} (upper and lower vertices in Fig. \ref{cyclic} can be either leaves or not). 
\end{definition}

\begin{figure}[h]
	\scalebox{1.5}{
		\begin{tikzpicture}
		\coordinate (A1) at (0,0);
		\coordinate (B1) at (0,2);
		\coordinate (C1) at (1.5,2);
		\coordinate (D1) at (1.5,0);
		\coordinate (E1) at (0.5,1);
		\coordinate (F1) at (1,1);
		\fill[black]  (A1) circle [radius=2pt];
		\fill[black]  (B1) circle [radius=2pt];
		\fill[black]  (C1) circle [radius=2pt];
		\fill[black]  (D1) circle [radius=2pt];
		\fill[black]  (E1) circle [radius=2pt];
		\fill[black]  (F1) circle [radius=2pt];
		\draw{(A1) -- (E1)};
		\draw{(B1) -- (E1)};
		\draw{(C1) -- (F1)};
		\draw{(D1) -- (F1)};
		\draw{(E1) -- (F1)};
		\coordinate (K1) at (-0.25,-0.25);
		\coordinate (L1) at (0.25,-0.25);
		\coordinate (M1) at (-0.25,2.25);
		\coordinate (N1) at (0.25,2.25);
		\coordinate (O1) at (1.25,2.25);
		\coordinate (P1) at (1.75,2.25);
		\coordinate (R1) at (1.25,-0.25);
		\coordinate (S1) at (1.75,-0.25);
		\draw[dashed]{(A1) -- (K1)};
		\draw[dashed]{(A1) -- (L1)};
		\draw[dashed]{(B1) -- (M1)};
		\draw[dashed]{(B1) -- (N1)};
		\draw[dashed]{(C1) -- (O1)};
		\draw[dashed]{(C1) -- (P1)};
		\draw[dashed]{(D1) -- (R1)};
		\draw[dashed]{(D1) -- (S1)};
		
		\coordinate (A2) at (3,0);
		\coordinate (B2) at (3,2);
		\coordinate (C2) at (4.5,2);
		\coordinate (D2) at (4.5,0);
		\coordinate (E2) at (3.5,1);
		\coordinate (F2) at (4,1);
		\fill[black]  (A2) circle [radius=2pt];
		\fill[black]  (B2) circle [radius=2pt];
		\fill[black]  (C2) circle [radius=2pt];
		\fill[black]  (D2) circle [radius=2pt];
		\fill[black]  (E2) circle [radius=2pt];
		\fill[black]  (F2) circle [radius=2pt];
		\draw{(A2) -- (E2)};
		\draw{(B2) -- (F2)};
		\draw{(C2) -- (F2)};
		\draw{(D2) -- (E2)};
		\draw{(E2) -- (F2)};
		
		\coordinate (K2) at (2.75,-0.25);
		\coordinate (L2) at (3.25,-0.25);
		\coordinate (M2) at (2.75,2.25);
		\coordinate (N2) at (3.25,2.25);
		\coordinate (O2) at (4.25,2.25);
		\coordinate (P2) at (4.75,2.25);
		\coordinate (R2) at (4.25,-0.25);
		\coordinate (S2) at (4.75,-0.25);
		\draw[dashed]{(A2) -- (K2)};
		\draw[dashed]{(A2) -- (L2)};
		\draw[dashed]{(B2) -- (M2)};
		\draw[dashed]{(B2) -- (N2)};
		\draw[dashed]{(C2) -- (O2)};
		\draw[dashed]{(C2) -- (P2)};
		\draw[dashed]{(D2) -- (R2)};
		\draw[dashed]{(D2) -- (S2)};
		
		\coordinate (A3) at (6,0);
		\coordinate (B3) at (6,2);
		\coordinate (C3) at (7.5,2);
		\coordinate (D3) at (7.5,0);
		\coordinate (E3) at (6.5,1);
		\coordinate (F3) at (7,1);
		\fill[black]  (A3) circle [radius=2pt];
		\fill[black]  (B3) circle [radius=2pt];
		\fill[black]  (C3) circle [radius=2pt];
		\fill[black]  (D3) circle [radius=2pt];
		\fill[black]  (E3) circle [radius=2pt];
		\fill[black]  (F3) circle [radius=2pt];
		\draw{(A3) -- (E3)};
		\draw{(B3) -- (F3)};
		\draw{(C3) -- (E3)};
		\draw{(D3) -- (F3)};
		\draw{(E3) -- (F3)};
		
		\coordinate (K3) at (5.75,-0.25);
		\coordinate (L3) at (6.25,-0.25);
		\coordinate (M3) at (5.75,2.25);
		\coordinate (N3) at (6.25,2.25);
		\coordinate (O3) at (7.25,2.25);
		\coordinate (P3) at (7.75,2.25);
		\coordinate (R3) at (7.25,-0.25);
		\coordinate (S3) at (7.75,-0.25);
		\draw[dashed]{(A3) -- (K3)};
		\draw[dashed]{(A3) -- (L3)};
		\draw[dashed]{(B3) -- (M3)};
		\draw[dashed]{(B3) -- (N3)};
		\draw[dashed]{(C3) -- (O3)};
		\draw[dashed]{(C3) -- (P3)};
		\draw[dashed]{(D3) -- (R3)};
		\draw[dashed]{(D3) -- (S3)};

		\end{tikzpicture}}
	\caption{Cyclic triple of trees.}
	\label{cyclic}
\end{figure}
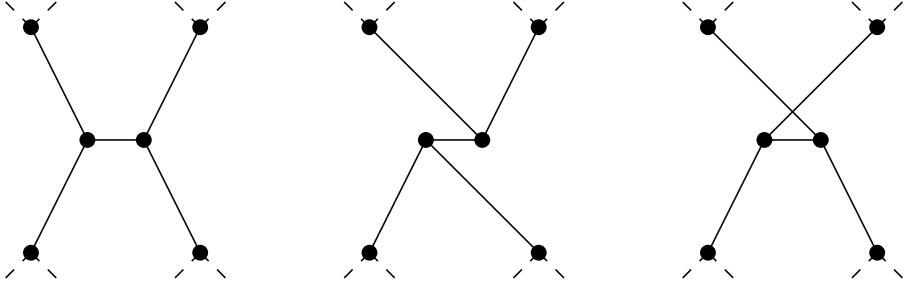

\begin{theorem} \label{main2}
	The abelian group $\P_{g}$ has a presentation where the generators are
	$\{\A_{\T} \; | \; \T \in \mathbf{T}_{g}\}$
	and the relations are
	\begin{equation}\label{relat}
	\{\A_{\T_1} + \A_{\T_2} + \A_{\T_3} = 0 \; | \; \{\T_{1}, \T_{2}, \T_{3}\} \mbox{ is a cyclic triple} \}.
	\end{equation}
\end{theorem}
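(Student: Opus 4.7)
The plan is to prove Theorem \ref{main2} in two stages. First I would establish that every cyclic triple relation holds in $\P_g$. Second I would show that, conversely, these relations exhaust all relations among the $\A_\T$.

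For the first stage, given a cyclic triple $\{\T_1, \T_2, \T_3\}$, I would construct three S-multicurves $M_1, M_2, M_3$ realizing these trees that share $\delta_1, \dots, \delta_g$ and all but one of the remaining $\xi$-components; the differing components live inside a common subsurface $\Sigma' \subset \S_g$ homeomorphic to a 4-holed sphere whose four boundary curves lie in $M_1 \cap M_2 \cap M_3$. The three differing curves $c^{(1)}, c^{(2)}, c^{(3)}$ are then the three essential separating simple closed curves inside $\Sigma'$. The core of the argument is to exploit the lantern relation $T_{c^{(1)}} T_{c^{(2)}} T_{c^{(3)}} = \prod_{j=1}^{4} T_{\partial_j}$ in $\Sigma'$. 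Passing to the abelianization of the subgroup of $\K_g$ generated by all Dehn twists about curves in $M_1 \cup M_2 \cup M_3$, this relation becomes $[T_{c^{(1)}}] + [T_{c^{(2)}}] + [T_{c^{(3)}}] = \sum_j [T_{\partial_j}]$. When one takes the exterior product with the top class coming from the $2g-4$ common Dehn twists, the right-hand side vanishes because each $\partial_j$ already contributes to that top class, while the left-hand side wedged with the top class produces $\A_{\T_1} + \A_{\T_2} + \A_{\T_3} = 0$ in $\H_{2g-3}(\K_g, \Z)$, once orderings of the $\xi_i$ have been chosen compatibly.

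For the second stage, let $Q_g$ denote the abelian group with the presentation stated in the theorem. By the first stage there is a well-defined surjection $\Phi: Q_g \twoheadrightarrow \P_g$ sending $[\T] \mapsto \A_\T$; I need to show $\Phi$ is injective. My plan is to extract a retraction $\Psi: \P_g \to Q_g$ from a chain-level computation of $\H_{2g-3}(\K_g, \Z)$ based on the complex studied by Bestvina, Bux, and Margalit: on the $\P_g$-component, the top chains are naturally indexed by $\mathbf{T}_g$, and the boundary maps from the $(2g-4)$-chains, corresponding to S-multicurves through $N$ whose dual tree has one internal vertex of valence four, should yield exactly the cyclic-triple sums. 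The resulting $\Psi$ would be inverse to $\Phi$, completing the proof.

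The main obstacle is verifying that the $(2g-4)$-boundaries in the second stage produce \emph{only} cyclic-triple relations on the $\P_g$-component, and do not leak into or from other parts of $\H_{2g-3}(\K_g, \Z)$. As a more concrete alternative one may run a rank comparison: use Gaifullin's lower bound on the $\Z[\wedge^3 \H / \H]$-span of the $\Mod(\S_g)$-orbit of $\A_{\T_0}$ together with Theorem \ref{main1} to pin down $\rk(\P_g \otimes \Q)$, and match it against a direct combinatorial computation of the rank of $Q_g \otimes \Q$ starting from $|\mathbf{T}_g| = (2g-5)!!$ generators modulo cyclic-triple relations. Either approach involves nontrivial bookkeeping, but the chain-level route is more conceptual and would additionally clarify the geometric origin of the cyclic relations.
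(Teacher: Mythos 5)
Your Stage 1 is essentially sound: the lantern relation $T_{c^{(1)}}T_{c^{(2)}}T_{c^{(3)}}=T_{\partial_1}T_{\partial_2}T_{\partial_3}T_{\partial_4}$, combined with additivity of abelian cycles in one slot (cross product of $\H_1$ of the centralizer of the $2g-4$ common twists with $\mu_{2g-4}$) and the vanishing of abelian cycles with a repeated entry (which is what actually kills the terms $\A(T_{\partial_j}, \dots)$, since each $\partial_j$ is one of the common components), does yield the cyclic-triple relations. The paper itself notes this lantern route in a remark, although its actual proof of the relations (Lemma \ref{cyc}) goes through Arnold's relations via the determinant identity $\det X_{k,\T_1}+\det X_{k,\T_2}+\det X_{k,\T_3}=0$. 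But the paper is explicit that existence of the relations is the easy part; the content of Theorem \ref{main2} is completeness, and there your Stage 2 has a genuine gap.

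The gap is twofold. First, your chain-level plan does not match the structure of the Bestvina--Bux--Margalit complex: its cells are supported on oriented multicurves of \emph{nonseparating} curves carrying basic $1$-cycles for a fixed class $x\in\H$, so an S-multicurve (all components separating) supports no cell at all; there are no ``top chains indexed by $\mathbf{T}_g$,'' and the cyclic relations are not boundaries in that complex. In the paper all relations among $\{\A_\T\}$ hold already inside the top homology of a \emph{single} stabilizer: $\Stab_{\K_g}(N)\cong\Z^{g-1}\times\PB_{g-1}$ (Lemma \ref{lemma22}), and $\H_{g-2}(\PB_{g-1},\Z)$ is computed completely against Arnold's monomial basis $W_k$ via $\langle W_k,\hat\A_\T\rangle=\pm\det X_{k,\T}$, with balanced trees giving the dual basis (Theorem \ref{cyctree}); this is Proposition \ref{th1}, and it is the engine you have no substitute for. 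The complex of cycles enters only to show $i_1:\H_{2g-3}(\Stab_{\K_g}(N),\Z)\to\H_{2g-3}(\K_g,\Z)$ is injective, and even that does not follow from Proposition \ref{prop1} alone, since $N$ is not a cell: the paper routes through nonseparating multicurves and a new ``complex of relative cycles'' on $\S_{0,2g}$, whose contractibility and stabilizer dimension bound (Theorems \ref{contr}, \ref{stabdim}) are substantial pieces of work culminating in Proposition \ref{prop2} and Lemma \ref{4lemma}. Second, your fallback rank comparison cannot close this: Gaifullin's theorem concerns only the translates of the single class $\A_{\T_0}$, giving a rank-one-per-coset lower bound, far short of the needed $\rk\P_g=(g-2)!$ (Theorem \ref{main3}); and Theorem \ref{main1} does not determine $\rk\P_g$ at all, while its proof in the paper rests on the same Proposition \ref{th2} you are trying to avoid, so invoking it would be circular. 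The only part of Stage 2 that goes through as you sketch it is the purely combinatorial upper bound, i.e.\ that cyclic moves reduce every tree to balanced ones (the paper's Lemma \ref{cyc2}); the matching lower bound is exactly the missing Arnold computation plus injectivity.
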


\begin{remark}
	Recall that the signs of the simple abelian cycles $\A_T$ depend on the order of the components of the corresponding S-multicurve. 
	If $\{\T_{1}, \T_{2}, \T_{3}\}$ is a cyclic triple, then the corresponding S-multicurves $\Delta_{\T_1}$, $\Delta_{\T_2}$, $\Delta_{\T_3}$ differ by only one component. In formula (\ref{relat}) we mean that the components of these three S-multicurves are ordered such that the orderings coincide at $2g-4$ positions.
\end{remark}

\begin{remark}
	The ''hard part'' of  Theorem \ref{main2} is the fact that any relation between simple abelian cycles follows from the relations (\ref{relat}). However, the existence of such relations is not hard.
	For example, one can deduce the relation (\ref{relat}) from the Lantern relation \cite[Proposition 5.1]{Primer}. Our proof is based on Arnold's relations in the cohomology of the pure braid group.
\end{remark}

Also we find an explicit basis of $\P_{g}$. For each tree $\T \in \mathbf{T}_{g}$ we say that the leaf with number $g$ is the \textit{root}, so $\T$ is a \textit{rooted tree}. In this case for each vertex the set of its \textit{descendant leaves} is well defined.

\begin{definition} \label{balance}
	Let $\T \in \mathbf{T}_{g}$. A vertex of $\T$ of degree $3$ is called \textit{balanced} if the paths from it to the two descendant leaves with the two smallest numbers have no common edges. The tree $\T$ is called \textit{balanced} is all its vertices of degree $3$ are balanced. The set of all balanced trees is denoted by $\mathbf{T}^{b}_{g} \subseteq \mathbf{T}_{g}$.
\end{definition}

An example of a balanced tree in the case $g=7$ is shown in Fig. \ref{bTree}.

\begin{figure}[h]
	\scalebox{1.5}{
		\begin{tikzpicture}
		\draw[] (0, 4) to (0, 3);
		\draw[] (-2, 2) to (0, 3);
		\draw[] (2, 2) to (0, 3);
		\draw[] (-2, 2) to (-3, 1);
		\draw[] (2, 2) to (3, 1);
		\draw[] (-2, 2) to (-1, 1);
		\draw[] (2, 2) to (1, 1);
		\draw[] (-3, 1) to (-4, 0);
		\draw[] (-3, 1) to (-2, 0);
		\draw[] (3, 1) to (4, 0);
		\draw[] (3, 1) to (2, 0);
		
		\fill[black]  (0, 4) circle [radius=2pt];
		\fill[black]  (0, 3) circle [radius=2pt];
		\fill[black]  (2, 2) circle [radius=2pt];
		\fill[black]  (-2, 2) circle [radius=2pt];
		\fill[black]  (-3, 1) circle [radius=2pt];
		\fill[black]  (-1, 1) circle [radius=2pt];
		\fill[black]  (1, 1) circle [radius=2pt];
		\fill[black]  (3, 1) circle [radius=2pt];
		\fill[black]  (-4, 0) circle [radius=2pt];
		\fill[black]  (-2, 0) circle [radius=2pt];
		\fill[black]  (2, 0) circle [radius=2pt];
		\fill[black]  (4, 0) circle [radius=2pt];
		
		\node[scale = 0.7][left] at (-4, 0) {$3$};
		\node[scale = 0.7][right] at (-2, 0) {$5$};
		\node[scale = 0.7][right] at (-1, 1) {$2$};
		\node[scale = 0.7][left] at (1, 1) {$4$};
		\node[scale = 0.7][left] at (2, 0) {$6$};
		\node[scale = 0.7][right] at (4, 0) {$1$};
		\node[scale = 0.7][right] at (0, 4) {$7$};
		
		\end{tikzpicture}}
	\caption{An example of a balanced tree in the case $g=7$.}
	\label{bTree}
\end{figure}
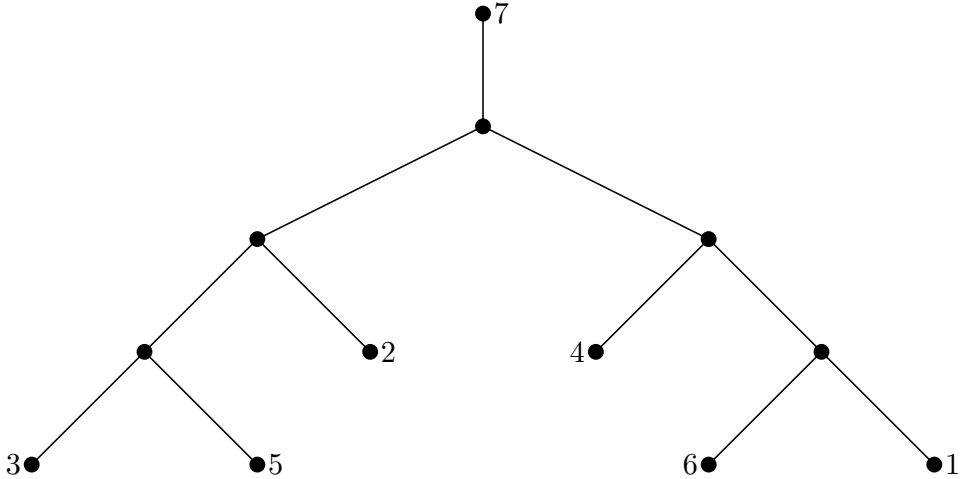

\begin{theorem} \label{main3}
	The simple abelian cycles
	$\{\A_{\T} \; | \;  \T \in \mathbf{T}^{b}_{g}\}$ form a basis of $\P_{g}$. We have $\rk \P_g = |\mathbf{T}^{b}_{g}| = (g-2)!$.
\end{theorem}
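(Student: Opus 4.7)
By Theorem \ref{main2} the group $\P_g$ has the presentation with generators $\{\A_\T : \T \in \mathbf{T}_g\}$ and relations given by cyclic triples (\ref{relat}). The plan is to prove, in this combinatorial model, three statements: (a) every $\A_\T$ lies in the $\Z$-span of $\{\A_{\T'} : \T' \in \mathbf{T}^b_g\}$; (b) $|\mathbf{T}^b_g| = (g-2)!$; and (c) the balanced classes are linearly independent in $\P_g$. Together these give the basis statement and the rank computation.

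For (a), root every tree at the leaf $g$, so that every internal vertex $v$ has two child-subtrees. For an internal vertex $v$ let $m_1(v) < m_2(v)$ denote the two smallest descendant leaf labels; $v$ is unbalanced exactly when both lie in the same child of $v$. Pick an unbalanced $v$ at which $m_2(v)$ is maximal: the heavy child $u$ of $v$ contains $m_1$ and $m_2$, while the light child $w$ has only larger descendant labels. Applying the cyclic triple relation at the interior edge $(u,v)$ yields $\A_\T = -\A_{\T'} - \A_{\T''}$, where $\T',\T''$ are obtained by swapping $w$ with one of the two children of $u$. A direct check on a lexicographic complexity (e.g.\ the multiset of the values $m_2(v)$ over the remaining bad vertices) shows that both $\T'$ and $\T''$ are strictly simpler than $\T$, so the induction terminates at a balanced representative. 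For (b), let $b_n$ denote the number of balanced rooted binary trees with leaves labeled $1,\dots,n$. At the root the balance condition is that leaves $1$ and $2$ fall in different child-subtrees; splitting the remaining leaves $\{3,\dots,n\}$ between the two sides, where the child containing $1$ has total size $k$, gives
\begin{equation*}
b_n = \sum_{k=1}^{n-1} \binom{n-2}{k-1}\, b_k\, b_{n-k},
\end{equation*}
and a one-line induction using $(k-1)!\,(n-k-1)!\,\binom{n-2}{k-1} = (n-2)!$ yields $b_n = (n-1)\cdot(n-2)! = (n-1)!$, hence $|\mathbf{T}^b_g| = b_{g-1} = (g-2)!$.

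The main obstacle is step (c): a priori the cyclic relations could collapse the span of the balanced trees, so after (a) and (b) we only know that $\rk \P_g \leq (g-2)!$. To match this bound from below, and following the remark indicating that the author's proof runs through Arnold's relations, the plan is to construct a homomorphism $\Phi \colon \P_g \to \H^{g-2}(P_{g-1}, \Z)$ into the top cohomology of the pure braid group on $g-1$ strands. Well-definedness on $\P_g$ reduces, by Theorem \ref{main2}, to checking that each cyclic triple relation is sent to an Arnold three-term relation among the generators $\omega_{ij}$. It then suffices to exhibit an explicit bijection between $\mathbf{T}^b_g$ and a no-broken-circuits basis of $\H^{g-2}(P_{g-1}, \Z)$, of cardinality $(g-2)!$, and to verify that $\Phi(\A_\T)$ coincides up to sign with the corresponding basis monomial. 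Linear independence of an nbc basis is the classical Brieskorn--Arnold result, so injectivity of $\Phi$ on the balanced span follows; combined with (a) and (b), this yields the theorem.
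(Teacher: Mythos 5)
Your steps (a) and (b) are essentially sound. Step (a) reproduces the paper's rewriting argument (Lemma \ref{cyc2}), though your termination measure is only asserted; the paper's choice of an unbalanced vertex of \emph{maximal height} makes the check easy, since the heavy child is then balanced, hence $v$ becomes balanced in both $\T'$ and $\T''$ and no unbalanced vertex of larger height is created. Step (b) is a correct alternative to the paper's count of $|\mathbf{T}^b_g|$ (the paper removes the leaf $q_{g-1}$, whose sibling must be a leaf by balancedness, giving $|\mathbf{T}^b_g|=(g-2)\cdot|\mathbf{T}^b_{g-1}|$); your recursion with $\binom{n-2}{k-1}(k-1)!(n-k-1)!=(n-2)!$ is fine, and taking Theorem \ref{main2} as input is not circular, since its proof does not use Theorem \ref{main3}.

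The genuine gap is step (c), and it is not merely unfinished: as stated it cannot be executed. You propose $\Phi(\A_\T)=\pm(\text{a monomial in the }w_{i,j})$ with each cyclic triple going to an Arnold three-term relation, i.e.\ the three generators of a triple map to the three terms of one relation. This fails already for $g=4$: the three trees with inner curve enclosing $\{q_2,q_3\}$, $\{q_1,q_3\}$, $\{q_1,q_2\}$ form a cyclic triple in which the first two are balanced and the third is not; if $\Phi$ sends the two balanced generators to distinct basis monomials up to sign (as your injectivity argument requires), the relation forces the image of the third generator to be $\mp$ the \emph{sum} of two distinct basis monomials, which is not $\pm$ a monomial. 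This is no accident: by Corollary \ref{cor3}, $\hat{\A}_\T=(-1)^{\binom{g-2}{2}}\sum_{k\in\mathbf{K}_g}\det(X_{k,\T})D_k$, and unbalanced trees genuinely have several nonzero coordinates, so any well-defined $\Phi$ is forced to be the full expansion $\A_\T\mapsto(-1)^{\binom{g-2}{2}}\sum_{k\in\mathbf{K}_g}\det(X_{k,\T})W_k$ (the paper works dually, pairing abelian cycles against Arnold's basis in Lemma \ref{pair}). Once $\Phi$ is defined this way, vanishing on cyclic triples is not a formal consequence of Arnold's relations but the determinant identity of Lemma \ref{cyc} (the three matrices differ in one column, and the summed column is proportional to the column of the enclosing curve $\xi_t$), and the evaluation $\Phi(\A_{\T_k})=\pm W_k$ on balanced trees is Theorem \ref{cyctree}, proved via the unitriangularity of $X_{k,\T_k}$ (Lemma \ref{l1}), the vanishing column of $X_{k',\T_k}$ (Lemma \ref{l2}), and the bijection $k\mapsto\T_k$ of Construction \ref{constr}. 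Everything you deferred is exactly this computation; without it your argument yields only $\rk\P_g\leq(g-2)!$. Note also that the paper's own proof of Theorem \ref{main3} bypasses your steps (a) and (c) entirely: it identifies $\P_g\cong\H_{g-2}(\PB_{g-1},\Z)$ via Proposition \ref{th2} and obtains from Corollary \ref{gen} that the balanced classes are, up to sign, the dual basis to Arnold's basis, which gives spanning, independence, and the rank simultaneously.
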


Theorems \ref{main1}, \ref{main2} and \ref{main3} provide a complete description of the group $\A_g$.

The author would like to thank his advisor Alexander A. Gaifullin for stating the problem, useful discussions and constant attention to this work. The author is a winner of the all-Russia mathematical August Moebius contest of graduate and undergraduate student papers and thanks the jury for the high praise of his work.

\section{Preliminaries and sketch of proof}

\subsection{Mapping class group of a surface with punctures and boundary components} Let $\S$ be an oriented surface, possibly with punctures and boundary components. We do not assume that $\S$ is connected. However, we require $\H_*(\S, \Q)$ be a finite dimensional vector space. The mapping class group of $\S$ is defined as $\Mod(\S) = \pi_{0}(\Homeo^{+}(\S, \partial\S))$, where $\Homeo^{+}(\S, \partial \S)$ is the group of orientation-preserving homeomorphisms of $\S$ that restrict to the identity on $\partial \S$. 
By $\PMod(\S) \subseteq \Mod(\S)$ we denote the \textit{pure mapping class group} of $\S$, i.e. the subgroup consisting of those elements fixing each of the punctures and each of the connected components. We have the exact sequence
\begin{equation} \label{Pmod}
1 \rightarrow \PMod(\S_{g, n}^b) \rightarrow \Mod(\S_{g, n}^b) \rightarrow S_n \rightarrow 1,
\end{equation}
where by $\S_{g, n}^b$ we denote the connected genus $g$ surface with $n$ punctures and $b$ boundary components.
For example, the pure mapping class group of the disk with $n$ punctures is precisely the \textit{pure braid group} $\PB_n = \PMod(\S_{0, n}^1)$.

\subsection{The Birman-Lubotzky–McCarthy exact sequence}

Let $M$ be a multicurve on $\S_g$. Then there is the following Birman-Lubotzky–McCarthy exact sequence (see \cite[Lemma 2.1]{Lubotzky})
\begin{equation} \label{LB}
1 \rightarrow G(M) \rightarrow \Stab_{\Mod(\S_{g})}(M) \rightarrow \Mod(\S_{g} \setminus M) \rightarrow 1,
\end{equation}
where $G(M)$ is the group generated by Dehn twists about the components of $M$.

Take $N = \delta_1 \cup \dots \cup \delta_g$ as in Fig. \ref{S-curve_1} and consider the group $\Stab_{\Mod(\S_{g})}(N)$. We have $\S_{g} \setminus N = \S_{0, g} \sqcup X_1 \sqcup \dots \sqcup X_g$, where $X_i$  is the one-punctured torus bounded by $\delta_i$. Since $\Mod(X_i) \cong \SL(2, \Z)$,  (\ref{Pmod}) and (\ref{LB}) imply the existence of the following commutative diagram.

\begin{equation} \label{LB0}
\scalebox{1}{
	\xymatrix{	
		&\ker \eta \ar@{-->>}[rr] \ar@{^{(}->}[rd] &&\PMod(\S_{0, g}) \ar@{^{(}->}[d] &\\
		1 \ar[r] & G(N) \ar@{^{(}-->}[u] \ar[r] & \Stab_{\Mod(\S_{g})}(N) \ar[r] \ar@{->>}[rd]_{\eta} &  \SL(2, \Z)^{\times g} \rtimes \Mod(\S_{0, g}) \ar[r] \ar@{->>}[d]  & 1\\
		&&& \SL(2, \Z)^{\times g} \rtimes S_{g}&
}}
\end{equation}
The diagram (\ref{LB0}) yields the exact sequence
\begin{equation} \label{LB3}
1 \rightarrow G(N) \rightarrow \ker \eta \rightarrow \PMod(\S_{0, g}) \rightarrow 1.
\end{equation}

\begin{prop} \label{prop}
	The following sequence is exact.
	\begin{equation}\label{exact}
	\scalebox{1}{
		\xymatrix{
			1 \ar[r] & \Stab_{\K_g} (N) \ar[r]  & \Stab_{\Mod(\S_g)} (N) \ar[r]^{\eta}  &  \SL(2, \Z)^{\times g} \rtimes S_{g}  \ar[r]  & 1.
		}
	}
	\end{equation}
\end{prop}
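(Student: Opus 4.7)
The nontrivial content of the proposition is the identification $\ker \eta = \Stab_{\K_g}(N)$; the surjectivity of $\eta$ onto $\mathcal{H} = \SL(2, \Z)^{\times g} \rtimes S_g$ is already established in the discussion immediately preceding the proposition and visible in the diagram (\ref{LB0}). My plan is therefore to prove both inclusions $\ker \eta \subseteq \Stab_{\K_g}(N)$ and $\Stab_{\K_g}(N) \subseteq \ker \eta$.

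The easy inclusion $\Stab_{\K_g}(N) \subseteq \ker \eta$ follows because $\K_g \subseteq \I_g$ acts trivially on $\H = \H_1(\S_g, \Z)$; in particular any element of $\Stab_{\K_g}(N)$ preserves each $V_i$ as the identity and does not permute the $V_i$, so its image in $\mathcal{H}$ is trivial.

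For the main inclusion $\ker \eta \subseteq \K_g$ (which, combined with the tautological $\ker \eta \subseteq \Stab_{\Mod(\S_g)}(N)$, yields $\ker \eta \subseteq \Stab_{\K_g}(N)$), the plan is to exploit the exact sequence (\ref{LB3}). Since $\ker \eta$ is generated by $G(N)$ together with an arbitrary set of lifts to $\ker \eta$ of generators of $\PMod(\S_{0,g})$, it suffices to place both in $\K_g$. The first is immediate: $G(N)$ is generated by the Dehn twists $T_{\delta_i}$, and each $\delta_i$ is separating by construction. For the second, $\PMod(\S_{0,g})$ is generated by Dehn twists about essential simple closed curves $c \subset \S_{0,g}$ (any such twist automatically fixes every puncture), and every essential simple closed curve on a punctured sphere is separating. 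Viewing such a $c$ inside $\S_g \setminus N \subset \S_g$, the curve $c$ remains separating in $\S_g$: it bounds in $\S_{0,g}$ a subsurface containing a subset $S$ of the punctures, and regluing the one-holed tori $X_i$ produces a compact subsurface of $\S_g$ with boundary $c$ containing exactly $\{X_i\}_{i \in S}$. Hence the canonical lift of $T_c$ to $\ker \eta$ is a Dehn twist about a separating curve of $\S_g$, so it lies in $\K_g$.

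The main obstacle I anticipate is the final step, verifying that generators of $\PMod(\S_{0,g})$ lift to Dehn twists about curves that are still separating in the larger surface $\S_g$; but as indicated above this reduces to the elementary regluing observation, together with the standard fact that the mapping class group of a punctured sphere is generated by Dehn twists. With both inclusions in hand, exactness of (\ref{exact}) at all three positions follows.
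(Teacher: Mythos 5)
Your proposal is correct and takes essentially the same approach as the paper: both prove $\Stab_{\K_g}(N)\subseteq\ker\eta$ trivially and then deduce $\ker\eta\subseteq\K_g$ from the exact sequence (\ref{LB3}) together with the fact that $G(N)$ and $\PMod(\S_{0,g})$ are generated by Dehn twists about separating curves. The only difference is that you spell out what the paper leaves implicit, namely that a simple closed curve in $\S_{0,g}$ remains separating after regluing the one-holed tori and that its twist lifts to the corresponding twist in $\S_g$, which is exactly the right justification.
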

\begin{proof}
	First let us show that $\Stab_{\K_g} (N) \subseteq \ker \eta$. Indeed, any element $\phi \in \Stab_{\K_g} (N)$ stabilises each component of $N$, so it also stabilises each $X_i$. Since $\K_g$ is contained in the Torelli group, it follows that the restriction of $\phi$ to $\Mod(X_i) \cong \SL(2, \Z) \subset \Sp(2g, \Z)$ is trivial for all $i$. 
	
	Let us prove the opposite inclusion.
    The groups $G(M)$ and $\PMod(\S_{g,n})$ are generated by Dehn twists about separating curves. The exact sequence (\ref{LB3}) implies that the same it true for $\ker \eta$, therefore $\ker \eta \subseteq \K_g$. Hence $\ker \eta \subseteq \Stab_{\K_g} (N)$. 
\end{proof}

\begin{lemma}\label{lemma22}
	There is an isomorphism
	\begin{equation} \label{Stab}
	\Stab_{\K_g}(N) \cong \Z^{g-1} \times \PB_{g-1}.
	\end{equation}
\end{lemma}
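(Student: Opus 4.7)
The plan is to identify $\Stab_{\K_g}(N)$ with the mapping class group of the planar subsurface of $\S_g$ bounded by $N$, and then to decompose that group as a direct product by capping all but one of its boundary components.

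Let $Y \subset \S_g$ denote the closure of the genus-$0$ component of $\S_g \setminus N$, so that $Y$ is a sphere with $g$ boundary components $\delta_1, \dots, \delta_g$. Extension by the identity on each $X_i$ gives a homomorphism $\iota \colon \Mod(Y) \to \Stab_{\K_g}(N)$, and $\iota$ is injective since no component of $\S_g \setminus Y$ is a disk or annulus. Capping each boundary circle of $Y$ with a once-punctured disk yields the exact sequence
$$1 \to \Z^g \to \Mod(Y) \to \PMod(\S_{0, g}) \to 1,$$
whose kernel is generated by the boundary Dehn twists $T_{\delta_1}, \dots, T_{\delta_g}$. Under $\iota$ this sequence maps onto the sequence (\ref{LB3}); the induced map is the identity on $\PMod(\S_{0,g})$ and identifies the two $\Z^g$ kernels via $T_{\delta_i} \leftrightarrow T_{\delta_i}$. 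The five lemma therefore yields $\iota \colon \Mod(Y) \xrightarrow{\sim} \Stab_{\K_g}(N)$.

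Next, I view $Y$ as a disk from which $g-1$ smaller open disks have been removed, with outer boundary $\delta_g$ and inner boundaries $\delta_1, \dots, \delta_{g-1}$. Capping only the $g-1$ inner boundaries with once-punctured disks transforms $Y$ into $\S_{0, g-1}^1$, producing a second exact sequence
$$1 \to \Z^{g-1} \to \Mod(Y) \to \PB_{g-1} \to 1,$$
whose kernel is $\langle T_{\delta_1}, \dots, T_{\delta_{g-1}}\rangle$.

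The remaining task is to split this extension as a direct product. Given $\phi \in \PB_{g-1}$, I choose a representative homeomorphism of $\S_{0, g-1}^1$ that is the identity in fixed small disk neighborhoods of the $g-1$ punctures; reopening those punctures into the inner boundaries of $Y$ produces a lift to $\Mod(Y)$, and uniqueness of collar neighborhoods shows that this lift is well defined up to isotopy. This defines a section $s \colon \PB_{g-1} \to \Mod(Y)$; by taking Dehn twist representatives for $T_{\delta_i}$ supported in those same collar neighborhoods, the image of $s$ commutes with every $T_{\delta_i}$. Thus $\Mod(Y) \cong \Z^{g-1} \times \PB_{g-1}$. The technical heart of the argument is this final splitting step, though the collar-neighborhood construction is standard.
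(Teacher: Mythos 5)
Your proof is correct, and it is organized differently from the paper's. The paper never introduces $\Mod(Y) = \Mod(\S_0^g)$: it constructs in one step a map $\psi\colon \Z^{g-1}\times\PB_{g-1}\to\Stab_{\K_g}(N)$, sending the generators of $\Z^{g-1}$ to $T_{\delta_1},\dots,T_{\delta_{g-1}}$ and realizing $\PB_{g-1}$ as a ``pure braid group on $g-1$ handles,'' and then applies the five lemma to the sequence (\ref{LB3}); to match the kernel $G(N)\cong\Z^g$ with a subgroup of $\Z^{g-1}\times\PB_{g-1}$ it needs Fact \ref{factPB}, that the center of $\PB_{g-1}$ is generated by the boundary twist and that the capping sequence $1\to\Z\to\PB_{g-1}\to\PMod(\S_{0,g})\to 1$ splits. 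You instead interpose the subsurface mapping class group: first $\Stab_{\K_g}(N)\cong\Mod(\S_0^g)$ via a five-lemma comparison of the full boundary-capping sequence with (\ref{LB3}), then $\Mod(\S_0^g)\cong\Z^{g-1}\times\PB_{g-1}$ by capping only the inner boundaries. This modular route avoids Fact \ref{factPB} and isolates the general statement $\Mod(\S_0^k)\cong\Z^{k-1}\times\PB_{k-1}$, which the paper itself states and uses later (Section 5); in exchange, the paper's single-diagram argument is shorter and never has to invoke injectivity of a subsurface inclusion (your separate injectivity claim for $\iota$ is in fact redundant once the five lemma is applied). One small omission: you should justify that the image of $\iota$ lies in $\K_g$ at all; this follows since extension-by-identity classes visibly lie in $\ker\eta$, which equals $\Stab_{\K_g}(N)$ by Proposition \ref{prop}.

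A caution about the step you rightly call the technical heart: ``uniqueness of collar neighborhoods'' does not give well-definedness of your section $s$. Two representatives of the same class in $\PB_{g-1}$, both equal to the identity on the fixed disk neighborhoods of the punctures, are joined by an isotopy that may rotate those disks a nonzero net number of times, and reopening then changes the class in $\Mod(Y)$ by twists about the $\delta_i$ --- this rotation ambiguity is exactly what the central kernel $\Z^{g-1}$ of your second sequence records, and it is invisible to collars. The honest justification, which the paper itself invokes in Section 5, is that the tangent bundle of the disk is trivial: the global framing allows you to demand that representatives, and the isotopies between them, move the marked disks rigidly by translations without rotation; equivalently, $\Mod(\S_0^g)$ is the fundamental group of the framed configuration space of $g-1$ points in the open disk, which is a trivial $(S^1)^{g-1}$-bundle over the ordinary configuration space, whence the direct product. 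The paper's own proof glosses the identical point with the phrase ``we assume that the handles do not rotate,'' so your argument is at a comparable level of rigor, but the appeal to collars should be replaced by the framing argument.
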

\begin{proof}
	We need the following fact.
	\begin{fact} \cite[Section 9.3]{Primer} \label{factPB}
	The center of the group $\PB_{g-1}$ is the infinite cyclic group, which is generated by the Dehn twist about the boundary curve. Moreover, we have the split exact sequence
	\begin{equation*}
	\scalebox{1}{
		\xymatrix{
			1 \ar[r] & \Z \ar[r]^(0.4){j_1} & \PB_{g-1} \ar[r] &  \PMod(\S_{0, g}) \ar[r] & 1,
	}}
	\end{equation*}
	where $j_1$ is the inclusion of the center of $\PB_{g-1}$.
	\end{fact}
	Consider the obvious map $$j: \Z^g \cong \Z^{g-1} \times \Z \hookrightarrow \Z^{g-1} \times \PB_{g-1},$$
	where the restriction of $j$ on the first factor is the identity isomorphism and the restriction of $j$ on the second factor is $j_1$.
	Fact \ref{factPB} and the exactness of (\ref{LB3}) implies that in order to finish the proof of Lemma \ref{lemma22} we need to construct the map $\psi: \Z^{g-1} \times \PB_{g-1} \to \Stab_{\K_g} (N)$ such that the following diagram commutes.
	\begin{equation*}\label{5lemma}
	\scalebox{1}{
		\xymatrix{
			1 \ar[r] & \Z^g \ar[r] & \Z^{g-1} \times \PB_{g-1} \ar[r] \ar@{-->}[d]^{\psi} &  \PMod(\S_{0, g}) \ar[r] & 1\\
			1 \ar[r] & G(N) \ar[r] \ar@{=}[u] & \Stab_{\K_g} (N) \ar[r]  & \PMod(\S_{0, g})  \ar[r] \ar@{=}[u]  & 1.
		}
	}
	\end{equation*}

\begin{figure}[h]
	\scalebox{1.8}{
		\begin{tikzpicture}
        \draw[] (2, 0) arc (0:-180:2);
        \draw[] (0, -1.5) circle (0.2);
		
		\draw[][red] (-2, 0) to [out=-40, in=-140] (2, 0);
		\draw[][red] (-2, 0) to [out=40, in=140] (2, 0);
		\node[red][scale=0.7] at (-0.8, -0.82) {$\delta_{g}$};

		\draw[][red] (-1.2, 0) to [out=-40, in=-140] (-0.7, 0);
		\draw[dashed][red] (-1.2, 0) to [out=40, in=140] (-0.7, 0);
		\draw[] (-1.2, 0) to (-1.2, 0.2);
		\draw[] (-0.7, 0) to (-0.7, 0.2);
		\draw[] (-1.2, 0.2) arc (180:0:0.25);
		\draw[] (-0.95, 0.3) circle (0.1);
		\node[red][scale=0.7] at (-0.95, -0.28) {$\delta_{1}$};
		
		\draw[][red] (-0.6, 0) to [out=-40, in=-140] (-0.1, 0);
		\draw[dashed][red] (-0.6, 0) to [out=40, in=140] (-0.1, 0);
		\draw[] (-0.6, 0) to (-0.6, 0.2);
		\draw[] (-0.1, 0) to (-0.1, 0.2);
		\draw[] (-0.6, 0.2) arc (180:0:0.25);
		\draw[] (-0.35, 0.3) circle (0.1);
		\node[red][scale=0.7] at (-0.35, -0.28) {$\delta_{2}$};
		
		\draw[dotted][thick][red] (0.1, 0) to (0.6, 0);
		
		\draw[][red] (0.7, 0) to [out=-40, in=-140] (1.2, 0);
		\draw[dashed][red] (0.7, 0) to [out=40, in=140] (1.2, 0);
		\draw[] (0.7, 0) to (0.7, 0.2);
		\draw[] (1.2, 0) to (1.2, 0.2);
		\draw[] (0.7, 0.2) arc (180:0:0.25);
		\draw[] (0.95, 0.3) circle (0.1);
		\node[red][scale=0.7] at (0.95, -0.28) {$\delta_{g-1}$};

		\end{tikzpicture}}
	\caption{}
	\label{iso}
\end{figure}

We define $\psi$ as follows. The generator of the $i^{\rm th}$ factor of $\Z^{g-1}$ maps to $T_{\delta_i}$. In order to define the restriction of $\psi$ on the factor $\PB_{g-1}$ let us identify $\S_g$ with the surface shown in Fig \ref{iso}. We have the disk bounded by $\delta_g$ with $g-1$ handles bounded by $\delta_1, \dots, \delta_{g-1}$. We can replace  all these handles by punctures and identify the group $\PB_{g-1}$ with the corresponding group $\PMod(\S^1_{0, g-1})$. Then we extend the mapping classes in $\PMod(\S^1_{0, g-1})$ to the handles so that the handles do not rotate.

Since the pure braid group is generated by Dehn twists about separating curves it follows that the image of $\psi$ is contained in $\K_g$. The 5-lemma completes the proof of Lemma \ref{lemma22}.
\end{proof}

\subsection{Simple abelian cycles}

Recall that for an S-multicurve $M = \gamma_{1} \cup \dots \cup \gamma_{2g-3}$ on $\S_g$ there is the corresponding simple abelian cycle $\A(M) = \A(T_{\gamma_{1}}, \dots, T_{\gamma_{2g-3}}) \in \H_{2g-3}(\K_g, \Z)$. We have already constructed the simple abelian cycles $\A_{\T} = \A(\Delta_\T)$ for $\T \in \mathbf{T}_{g}$.
\begin{prop} \label{correct}
	Let $\Delta_\T = \delta_{1} \cup \dots \cup \delta_{g} \cup \xi_{2} \cup \dots \cup \xi_{g-2}$ and $\Delta'_\T = \delta_{1} \cup \dots \cup \delta_{g} \cup \xi'_{2} \cup \dots \cup \xi'_{g-2}$ be two S-multicurves with the same dual tree $\T$. Then $\Delta_{\T}$ and $\Delta'_\T$ belong to the same $\K_g$-orbit (up to a permutation of the components).
    In particular, the simple abelian cycles $\{\A_{\T} \; | \; \T \in \mathbf{T}_g\}$ are well defined.
\end{prop}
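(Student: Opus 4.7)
The plan is to reduce the problem, via the Birman--Lubotzky--McCarthy sequence, to a standard change of coordinates statement about pants decompositions of a holed sphere.

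First I would cut $\S_g$ along the common submulticurve $N = \delta_1 \cup \dots \cup \delta_g$. Since each $\xi_i$ (resp. $\xi'_i$) is separating in $\S_g$ and disjoint from $N$, it lies entirely in the $g$-holed sphere component $\S_{0,g} \subset \S_g \setminus N$, and is a separating curve there. An Euler characteristic count (as pointed out right after the definition of an S-multicurve) shows that $\xi_2 \cup \dots \cup \xi_{g-2}$ and $\xi'_2 \cup \dots \cup \xi'_{g-2}$ are each pants decompositions of $\S_{0,g}$, whose boundary components are labeled $1, \dots, g$ by which $\delta_i$ they come from. The dual tree of each of these pants decompositions is the tree $\T$ with the leaf labeled $i$ removed being identified with the boundary labeled $i$; in particular both pants decompositions have the same labeled dual tree.

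Next I would invoke the change of coordinates principle for pants decompositions: two pants decompositions of a surface with labeled boundary whose dual trees coincide (as trees with labeled leaves) are carried to one another, up to a permutation of the interior curves, by an element of the pure mapping class group $\PMod(\S_{0,g})$. This is a well-known fact, proved by induction on $g$ by choosing a common outermost pair of pants (using the labeling of the leaves of $\T$ to match them) and applying the classification of homeomorphisms of a pair of pants. Call this element $\phi \in \PMod(\S_{0,g})$; it sends $\xi_2 \cup \dots \cup \xi_{g-2}$ to $\xi'_2 \cup \dots \cup \xi'_{g-2}$ as unordered multicurves.

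Finally I would lift $\phi$ to $\K_g$. By Lemma~\ref{lemma22} (and the exact sequence (\ref{LB3}) that underlies it), the group $\Stab_{\K_g}(N)$ surjects onto $\PMod(\S_{0,g})$. Choose any lift $\widehat{\phi} \in \Stab_{\K_g}(N)$. Since $\widehat{\phi}$ fixes $N$ pointwise up to isotopy and induces $\phi$ on $\S_{0,g}$, and acts trivially on each one-punctured torus $X_i$ up to the data that was quotiented away, we have $\widehat{\phi}(\Delta_\T) = \Delta'_\T$ as unordered multicurves, i.e. up to a permutation of components. This shows $\Delta_\T$ and $\Delta'_\T$ lie in the same $\K_g$-orbit, so the abelian cycle $\A_\T = \A(\Delta_\T)$ is well defined up to the sign coming from a reordering of the curves $\xi_2, \dots, \xi_{g-2}$.

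The main obstacle is the change of coordinates step: although this is a classical fact, one must be careful that the boundary labels are respected (so we need the pure mapping class group rather than the full $\Mod(\S_{0,g})$), which is precisely why the lift into $\K_g$ is available via Lemma~\ref{lemma22} — a lift into $\Mod(\S_g)$ that only permuted the boundary components of $\S_{0,g}$ would not suffice, since it need not lie in $\K_g$.
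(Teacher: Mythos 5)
Your proof is correct and is essentially the paper's own argument: the paper likewise uses the change of coordinates principle to produce a mapping class fixing each $\delta_j$, restricting to the identity on each one-punctured torus $X_j$, and carrying $\xi_2 \cup \dots \cup \xi_{g-2}$ to $\xi'_2 \cup \dots \cup \xi'_{g-2}$ up to permutation, and then concludes via Proposition \ref{prop} that such an element lies in $\ker \eta = \Stab_{\K_g}(N) \subseteq \K_g$. Your only variation --- cutting along $N$ first and lifting $\phi \in \PMod(\S_{0,g})$ through the surjection $\Stab_{\K_g}(N) \twoheadrightarrow \PMod(\S_{0,g})$ coming from (\ref{LB3}), rather than normalizing a homeomorphism of $\S_g$ directly --- is cosmetic, since both hinge on the same identification of $\ker\eta$ with a subgroup of $\K_g$.
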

\begin{proof}
	Since $\Delta_\T$ and $\Delta'_\T$ have the same dual tree, there is an element $\phi \in \Mod(\S_{g})$ such that after a permutation of $\xi_2, \dots, \xi_{g-2}$ we have $\phi(\xi_i) = \xi'_i$ and $\phi(\delta_j) = \delta_j$ for all $i$ and $j$. Also, we can assume that $\phi|_{X_j} = \id$ for all $1 \leq j \leq g$. Then $\phi \in \ker \eta$ (see the exact sequence (\ref{exact})), so $\phi \in \K_g$. Therefore all such S-multicurves $\Delta_{\T}$ belong to the same $\K_g$-orbit, so the simple abelian cycle $\A_{\T} = \A(\Delta_{\T}) \in \H_{2g-3}(\K(\S_{g}, \Z))$ is defined uniquely up to a sign.
\end{proof}

\begin{prop} \label{AC}
	 The simple abelian cycles $\{\A_{\T} \; | \; \T \in \mathbf{T}_g\}$ generate $\A_g$ as a $\Z[\mathcal{G}_g]$-module.
\end{prop}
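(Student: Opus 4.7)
The plan is to reduce an arbitrary simplest abelian cycle $\A(M)$ to one of the chosen cycles $\A_{\T}$ by applying a mapping class that converts $M$ into $\Delta_{\T(M)}$.

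Concretely, let $M = \gamma_1 \cup \dots \cup \gamma_{2g-3}$ be any S-multicurve and let $\T = \T(M) \in \mathbf{T}_g$ be its dual tree (which is indeed an element of $\mathbf{T}_g$ because of our convention that the first $g$ components bound one-punctured tori, so the leaves of $\T$ inherit a labeling by $1,\dots,g$). The first step is to invoke the change of coordinates principle for multicurves: two ordered multicurves on $\S_g$ lie in the same $\Mod(\S_g)$-orbit if and only if they have the same topological type, and the topological type of an S-multicurve is completely recorded by its ordered dual tree in $\mathbf{T}_g$. Hence there exists $\phi \in \Mod(\S_g)$ with $\phi(\Delta_{\T}) = M$ as ordered multicurves, up to a permutation of the $g-3$ non-leaf components $\xi_2,\dots,\xi_{g-2}$ that at worst introduces an overall sign.

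Next, using $\phi T_{\gamma}\phi^{-1} = T_{\phi(\gamma)}$ for any curve $\gamma$, conjugation by $\phi$ carries the commuting tuple of Dehn twists defining $\A_{\T}$ to the one defining $\A(M)$, up to the sign noted above. The conjugation action of $\Mod(\S_g)$ on $\H_*(\K_g,\Z)$ factors through $\mathcal{G} = \Mod(\S_g)/\K_g$, so if $\bar\phi \in \mathcal{G}$ denotes the class of $\phi$, we obtain
\[
\A(M) = \pm\,\bar\phi \cdot \A_{\T}
\]
inside $\H_{2g-3}(\K_g,\Z)$. Since $M$ was an arbitrary S-multicurve and $\A_g$ is by definition generated by all such $\A(M)$, this shows that every generator of $\A_g$ lies in the $\Z[\mathcal{G}]$-submodule spanned by $\{\A_\T : \T \in \mathbf{T}_g\}$, which is precisely the claim.

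The only point requiring any care is the change of coordinates step; this is standard (cf.\ the Primer on mapping class groups), and its well-definedness on the level of $\A_\T$ is already secured by Proposition~\ref{correct}, which handles the ambiguity coming from the choice of representative multicurve $\Delta_\T$ in the $\K_g$-orbit. The sign arising from re-ordering the non-leaf curves is absorbed into the coefficient $\pm 1 \in \Z \subseteq \Z[\mathcal{G}]$ and causes no trouble.
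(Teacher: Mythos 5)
Your proof is correct and follows essentially the same route as the paper: both use the change of coordinates principle to produce $\phi \in \Mod(\S_g)$ carrying $M$ to a multicurve of the form $\Delta_{\T(M)}$ (the paper maps the torus-bounding components $\gamma_j$ to $\delta_j$ and lets Proposition \ref{correct} handle the remaining components, while you transport the whole ordered multicurve at once, again citing Proposition \ref{correct} for well-definedness), and then conclude $\A(M) = \pm\,\bar\phi^{\,\pm 1} \cdot \A_{\T(M)}$ via $\phi T_\gamma \phi^{-1} = T_{\phi(\gamma)}$ and the fact that the action on $\H_{2g-3}(\K_g,\Z)$ factors through $\mathcal{G}$. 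The differences are purely presentational, so nothing further is needed.
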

\begin{proof}
	Consider an S-multicurve $M' = \gamma_{1} \cup \dots \cup \gamma_{2g-3}$. We can assume that the curves $\gamma_1, \dots, \gamma_g$ bound one-punctured tori on $\S_g$. There is an element $\phi \in \Mod(\S_{g})$ such that $\phi(\gamma_j) = \delta_j$ for all $j$. Then $\phi \cdot \A(M') = \pm \A_{\T(M')}$, so $\A(M') = \pm \phi^{-1} \cdot \A_{\T(M')}$. This implies the proposition.
\end{proof}

\subsection{Sketches of the proofs of Theorems \ref{main1}, \ref{main2} and \ref{main3}}

By Lemma \ref{lemma22} we can consider the simple abelian cycles 
$\{\A_{\T} \; | \; \T \in \mathbf{T}_g\}$ as an elements of the group $\H_{2g-3}(\Stab_{\K_g}(N), \Z) \cong \H_{2g-3}(\Z^{g-1} \times \PB_{g-1})$.
The following proposition will be proved in Section \ref{sec3}. Its proof is based on Arnold's relations in the cohomology of the pure braid group.
\begin{prop} \label{th1}
	The abelian group $\H_{2g-3}(\PB_{g-1} \times \Z^{g-1}, \Z)$ is generated by the elements $\A_\T$, where $\T \in \mathbf{T}_{g}$. All relations among this generators follows from the relations 
	$$\A_{\T_1} + \A_{\T_2} + \A_{\T_3} = 0$$
	for each cyclic triple $\{\T_{1}, \T_{2}, \T_{3}\}$.
\end{prop}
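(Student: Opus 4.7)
The plan is to begin with the K\"unneth formula. Since $\cd(\PB_{g-1}) = g-2$ and $\cd(\Z^{g-1}) = g-1$, in degree $2g-3$ only the top K\"unneth summand can survive, giving a canonical isomorphism
$$\H_{2g-3}(\PB_{g-1} \times \Z^{g-1}) \cong \H_{g-2}(\PB_{g-1}) \otimes \H_{g-1}(\Z^{g-1}).$$
The factor $\H_{g-1}(\Z^{g-1}) \cong \Z$ is generated (up to sign) by $\A(T_{\delta_1}, \ldots, T_{\delta_{g-1}})$, so under the identification of Lemma~\ref{lemma22} each $\A_\T$ corresponds to this fixed generator tensored with the auxiliary class
$$\bar{\A}_\T := \A(T_{\delta_g}, T_{\xi_2}, \ldots, T_{\xi_{g-2}}) \in \H_{g-2}(\PB_{g-1}).$$
Thus it suffices to prove the analogous generation-and-relations statement for the $\bar{\A}_\T$ inside $\H_{g-2}(\PB_{g-1})$.

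Next I would interpret the relevant Dehn twists inside $\PB_{g-1}$ itself. Viewing $\S_{0,g}$ as a disk with $g-1$ punctures, with $\delta_g$ as the boundary, the twist $T_{\delta_g}$ becomes the full twist, generating the center of $\PB_{g-1}$. Declaring the leaf labelled $g$ of $\T$ to be the root turns $\T$ into a rooted binary tree whose $g-3$ internal edges correspond bijectively to a nested family of subsets $S_2, \ldots, S_{g-2} \subseteq \{1, \ldots, g-1\}$; each $T_{\xi_i}$ is then the Dehn twist about a simple closed curve enclosing exactly the punctures in $S_i$, i.e., the ``full twist on $S_i$'' inside $\PB_{g-1}$.

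The key input is Arnold's presentation of the cohomology ring of $\PB_{g-1}$ as the exterior algebra on classes $\omega_{jk}$ ($1 \le j < k \le g-1$) modulo the three-term relations
$$\omega_{ij}\omega_{jk} - \omega_{ij}\omega_{ik} + \omega_{jk}\omega_{ik} = 0;$$
in particular $\H^{g-2}(\PB_{g-1})$ is free abelian of rank $(g-2)!$. For each cyclic triple $\{\T_{1}, \T_{2}, \T_{3}\}$ the three trees differ only at one internal edge, and after the identification of Dehn twists above one checks that the corresponding sum $\bar{\A}_{\T_1} + \bar{\A}_{\T_2} + \bar{\A}_{\T_3}$ is exactly dual (via the pairing below) to an instance of Arnold's three-term relation, hence vanishes in $\H_{g-2}(\PB_{g-1})$.

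The main obstacle, and the bulk of the remaining argument, is the converse: ruling out further relations. My approach is to set up the pairing of each $\bar{\A}_\T$ with Arnold monomials by pulling the $\omega_{jk}$ back along the abelian-subgroup inclusion $\Z^{g-2} \hookrightarrow \PB_{g-1}$ spanned by $\{T_{\delta_g}, T_{\xi_2}, \ldots, T_{\xi_{g-2}}\}$ and evaluating on the fundamental class of the torus; geometrically, each linking number between two nested curves contributes a $\pm 1$ to the pairing. On the combinatorial side I would then show that modulo the cyclic triple relations, any symbol $\A_\T$ reduces to a $\Z$-linear combination of the $(g-2)!$ balanced trees of Definition~\ref{balance}, giving an upper bound on the rank of the abstract quotient. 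Combined with the fact that $\rk \H_{g-2}(\PB_{g-1}) = (g-2)!$, this forces the surjection from the presentation onto $\H_{g-2}(\PB_{g-1})$ to be an isomorphism, completing the proof of Proposition~\ref{th1}.
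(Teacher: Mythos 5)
Your proposal is correct and follows essentially the same route as the paper's proof: your K\"unneth reduction to $\H_{g-2}(\PB_{g-1},\Z)$ is the paper's isomorphism (\ref{homology}), your pairing of the classes $\bar{\A}_\T$ against Arnold's monomial basis via enclosure/linking numbers is Lemma \ref{pair}, your verification of the cyclic-triple relation by pairing is Lemma \ref{cyc}, and your reduction of an arbitrary tree to balanced trees modulo these relations plus the rank count $(g-2)!$ is Lemma \ref{cyc2} combined with Corollary \ref{gen}. The steps you leave as sketches (the triangularity/unimodularity of the pairing matrix on balanced trees, needed both for generation and for the linear independence that makes the rank argument close, and the explicit rebalancing induction) are exactly the computations the paper carries out in Construction \ref{constr} and Lemmas \ref{l1}, \ref{l2} and \ref{cyc2}, so there is no genuine divergence.
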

The next result will be proved in Sections 4 and 5. The proof is based on the spectral sequence for the action of $\K_g$ on the contractible complex of cycles, introduced by Bestvina, Bux and Margalit in \cite{Bestvina}, and certain new complexes which will be constructed below.
\begin{prop} \label{th2}
	Let $f_{1} = 1, f_{2}, f_{3},  \dots \in \mathcal{G}_g$ be representatives of all left cosets $\mathcal{G}_g / \mathcal{H}_g$ and let $\hat{f}_1, \hat{f}_2, \hat{f}_3 \dots \in \Mod(\S_{g})$ be their lifts in $\Mod(\S_{g})$. Then the inclusions
	\begin{equation*}
	i_s: \Stab_{\K_g}(\hat{f}_s \cdot N) \hookrightarrow \K_g, \;\;\; s \in \mathbb{N}
	\end{equation*}
	induce an injective homomorphism
	\begin{equation} \label{eq0}
    \bigoplus_{s \in \mathbb{N}} \H_{2g-3}(\Stab_{\K_g}(\hat{f}_s \cdot N), \Z) \hookrightarrow \H_{2g-3}(\K_g, \Z).
	\end{equation}
\end{prop}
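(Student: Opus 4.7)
The strategy is to run the equivariant homology spectral sequence for a contractible $(2g-3)$-dimensional $\K_g$-CW complex. I would take as starting point the Bestvina--Bux--Margalit complex of cycles $\mathcal{B}$, together with the additional complexes the author refers to; these should be assembled into a contractible complex of dimension $\le 2g-3$ on which $\K_g$ acts cellularly, with the property that for each $s \in \mathbb{N}$ there is a distinguished orbit of vertices $[v_s]$ with $\Stab_{\K_g}(v_s) = \Stab_{\K_g}(\hat{f}_s \cdot N)$. The orbits $[v_s]$ are pairwise disjoint because diagram (\ref{maincd}) identifies $\K_g$-orbits of $\Mod(\S_g)$-translates of $N$ with the cosets $\mathcal{G}/\mathcal{H}$ used to index the $f_s$.

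Next, I would apply the equivariant spectral sequence
\begin{equation*}
E^1_{p,q} = \bigoplus_{[\sigma] \in \mathcal{B}_p/\K_g} \H_q(\Stab_{\K_g}(\sigma), \Z_\sigma) \;\Longrightarrow\; \H_{p+q}(\K_g, \Z).
\end{equation*}
Since the complex has dimension at most $2g-3$, the $E^1$-page vanishes for $p > 2g-3$, so at total degree $2g-3$ the column $p=0$ produces an edge homomorphism $E^{\infty}_{0, 2g-3} \hookrightarrow \H_{2g-3}(\K_g, \Z)$. The group $\bigoplus_s \H_{2g-3}(\Stab_{\K_g}(\hat{f}_s \cdot N))$ sits inside $E^1_{0, 2g-3}$ as the span of the distinguished orbits $[v_s]$, and the map in (\ref{eq0}) is by construction the restriction of the edge map to this direct summand. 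Injectivity of (\ref{eq0}) thus reduces to showing that this summand survives to $E^{\infty}$.

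The main step, and principal obstacle, is to control the differentials $d^r$ landing in this summand. These differentials originate at positions $E^r_{r, 2g-2-r}$ of total degree $2g-2$, coming from stabilizers of cells $\sigma$ of positive dimension incident to some $v_s$; geometrically such $\sigma$ correspond to multicurves strictly refining $\hat{f}_s \cdot N$. Two features should make these differentials tractable. First, because the orbits $[v_s]$ are disjoint, no differential can mix different values of $s$, so the question reduces to a single orbit and amounts to a statement about the action of $\Stab_{\K_g}(\hat{f}_s \cdot N)$ on the subcomplex of $\mathcal{B}$ containing $v_s$. Second, the explicit description $\Stab_{\K_g}(N) \cong \Z^{g-1} \times \PB_{g-1}$ from Lemma \ref{lemma22}, together with the description of $\H_{2g-3}(\Stab_{\K_g}(N))$ in terms of the cycles $\A_{\T}$ from Proposition \ref{th1}, gives an explicit basis on which one can check directly that boundaries coming from higher-dimensional cell stabilizers do not kill any class. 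The technical heart of the argument is the construction and analysis of the auxiliary complexes needed to identify the image of the differentials, which is presumably what fills Sections 4--5.
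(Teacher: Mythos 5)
There is a genuine gap, and it sits exactly where you placed your main unsupported assumption: the existence of a contractible complex of dimension at most $2g-3$ with distinguished vertices $v_s$ whose $\K_g$-stabilizers are exactly $\Stab_{\K_g}(\hat{f}_s \cdot N)$. No such complex appears in the paper, and the complex of cycles cannot serve: the vertices of $\B_g(x)$ are supports of basic $1$-cycles, hence multicurves of \emph{nonseparating} curves, while $N$ consists of separating curves and is not a cell of $\B_g(x)$ at all. The paper's actual route is different and two-staged. First (Lemma \ref{lemma}) it chooses, via Proposition \ref{prop_dec}, a homology class $x$ and auxiliary nonseparating multicurves $A_r \in \M_0(x)$ lying in pairwise distinct $\K_g$-orbits, and \emph{factors} the maps as $\H_{2g-3}(\Stab_{\K_g}(\hat{u}\cdot\hat{h}_r\cdot N),\Z) \rightarrow \H_{2g-3}(\Stab_{\K_g}(A_r),\Z) \rightarrow \H_{2g-3}(\K_g,\Z)$; this separates only the distinct symplectic splittings, not the $\U_g$-translates within one splitting, for which the stabilizers of the $A_r$ are too coarse. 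Separating those (Lemma \ref{4lemma}) requires cutting along a multicurve $B$ and constructing a genuinely new object, the complex of relative cycles $\B_{0,2g}$ on the punctured sphere, proving its contractibility (Theorem \ref{contr}), that the action is without rotations, the stabilizer bound (Theorem \ref{stabdim}), and finally showing the multiarcs $L'_s$ lie in distinct orbits via the Johnson homomorphism ($\tau(\hat{u}_1)=\tau(\hat{u}_2)$ forces $u_1=u_2$). Positing a single assembled complex with the desired vertex stabilizers is assuming precisely what Sections 4--5 must build.

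Your treatment of the differentials is also wrong in mechanism. Knowing only $\dim \leq 2g-3$ gives $E^1_{p,q}=0$ for $p>2g-3$, which leaves all the potential sources $E^r_{r,\,2g-2-r}$ with $1 \leq r \leq 2g-3$ alive; your two proposed remedies fail. Disjointness of the vertex orbits does not prevent mixing: $d^1\colon E^1_{1,2g-3}\to E^1_{0,2g-3}$ is induced by boundaries of $1$-cells, and a $1$-cell may join vertices in different orbits, so the orbit decomposition of the $p=0$ column imposes no constraint on $d^1$. And no basis computation with the cycles $\A_\T$ is needed or used: the paper's point is that the stabilizer-dimension inequality $\dim(\sigma)+\cd(\Stab(\sigma))\leq 2g-3$ (Bestvina--Bux--Margalit's Proposition 6.2 for $\B_g(x)$, and Theorem \ref{stabdim} for $\B_{0,2g}$) forces $E^1_{p,q}=0$ for \emph{all} $p+q>2g-3$, so every source of a differential into the corner $E^1_{0,2g-3}$ is the zero group and $E^1_{0,2g-3}=E^\infty_{0,2g-3}$ holds trivially (Propositions \ref{prop1} and \ref{prop2}). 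Thus the ``technical heart'' you defer to Sections 4--5 is mislocated: those sections do not identify images of differentials; they establish the dimension bounds that annihilate the differentials outright, together with the orbit-separation statements ($A_r$ via the choice of $x$, and $L'_s$ via the Johnson homomorphism) that make the direct sum over orbit representatives inject.
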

\begin{proof}[Proof of Theorem \ref{main2}]
By Proposition \ref{th2} the map $$i_1: \H_{2g-3}(\Stab_{\K_g}(N), \Z) \hookrightarrow \H_{2g-3}(\K_g, \Z)$$ is injective. Since we have $\P_{g} = i_1(\H_{2g-3}(\Stab_{\K_g}(N), \Z))$, Proposition \ref{th1} implies the required result.	
\end{proof}

\begin{proof}[Proof of Theorem \ref{main1}]
By Proposition \ref{th2} and Proposition \ref{AC} we obtain that (\ref{eq0}) induces an isomorphism
\begin{equation*}
\bigoplus_{s \in \mathbb{N}} \H_{2g-3}(\Stab_{\K_g}(\hat{f_{s}} \cdot N), \Z) \cong \A_g,
\end{equation*}
so
\begin{equation*}
\A_g = \bigoplus_{s \in \mathbb{N}} \hat{f}_{s} \cdot \P_{g}.
\end{equation*}
Therefore, by definition of induced module we have
\begin{equation*}
\hspace{1.1cm} \A_g \cong \P_{g} \otimes_{\mathcal{H}_g} \Z[\mathcal{G}_g] = \Ind^{\mathcal{G}_g}_{\mathcal{H}_g} \P_{g}.
\end{equation*}
\end{proof}
Theorem \ref{main3} will be deduced from Proposition \ref{th1} in Section \ref{sec3}.

\section{Proof of Proposition \ref{th1}} \label{sec3}

\subsection{Cohomology of the pure braid group} In order to prove Proposition \ref{th1} we conveniently consider the pure braid group on $g-1$ strands $q_1, \dots, q_{g-1}$. Let us recall Arnold's results on the structure of the ring $\H^{*}(\PB_{g-1}, \Z)$. The group $\PB_{g-1}$ has standard set of generators $a_{i, j}$ for $1 \leq i < j \leq g-1$. These elements are the Dehn twists about curves enclosing $i^{th}$ and $j^{\rm th}$ strands (see Fig. \ref{braid}). Denote by $h_{i, j} \in \H_{1}(\PB_{g-1}, \Z)$ the corresponding homology classes.  We denote by $\{w_{i, j}\}$ the dual basis of $\H^1(\PB_{g-1}, \Z)$. These cohomology classes can be interpreted as the following homomorphisms
\begin{equation} \label{braidcoh}
w_{i, j}: \PB_{g-1} \to \PB_2 \cong \Z
\end{equation}
given by forgetting all strands besides $q_i$ and $q_j$. It is convenient to put $w_{j, i} = w_{i, j}$. 

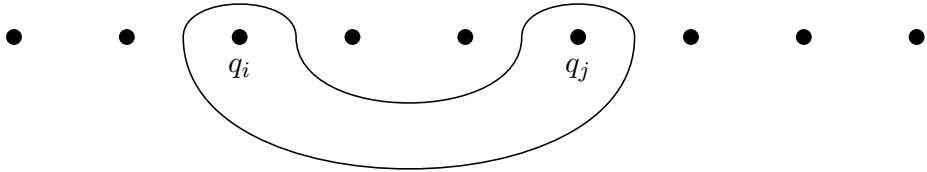
\begin{figure}[h]
	\scalebox{1.5}{
		\begin{tikzpicture}
		
		\draw[] (-2.5, 0) to [out=90, in=90] (-1.5, 0);
		\draw[] (0.5, 0) to [out=90, in=90] (1.5, 0);
		\draw[] (-1.5, 0) to [out=-90, in=-90] (0.5, 0);
		\draw[] (-2.5, 0) to [out=-90, in=-90] (1.5, 0);

		\fill[black]  (-4, 0) circle [radius=2pt];
		\fill[black]  (-3, 0) circle [radius=2pt];
		\fill[black]  (-2, 0) circle [radius=2pt];
		\fill[black]  (-1, 0) circle [radius=2pt];
		\fill[black]  (0, 0) circle [radius=2pt];
		\fill[black]  (1, 0) circle [radius=2pt];
		\fill[black]  (2, 0) circle [radius=2pt];
		\fill[black]  (3, 0) circle [radius=2pt];
		\fill[black]  (4, 0) circle [radius=2pt];

		\node[scale = 0.7][below] at (-2, -0.1) {$q_i$};
		\node[scale = 0.7][below] at (1, -0.1) {$q_j$};

		\end{tikzpicture}}
	\caption{The element $a_{i, j}$ is the Dehn twist about the shown curve.}
	\label{braid}
\end{figure}

\begin{theorem} \cite[Theorem 1]{Arnold} \label{Arnold1}
	The ring $\H^{*}(\PB_{g-1}, \Z)$ is the exterior graded algebra with ${g-1}\choose{2}$ generators $w_{i, j}$ of degree $1$, satisfying ${g-1}\choose{3}$ relations
	\begin{equation*} \label{presentanion}
	w_{k, l}w_{l, m} + w_{l, m}w_{m, k} + w_{m, k}w_{k, l} = 0
	\end{equation*} 
	for all $1 \leq k < l < m \leq n$.
\end{theorem}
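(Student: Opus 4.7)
The plan is to reduce the computation of $\H^{*}(\PB_{g-1}, \Z)$ to the topology of a hyperplane complement. First I would identify $\PB_{g-1}$ with the fundamental group of the ordered configuration space
$$F = F(\mathbb{C}, g-1) = \{(z_1, \dots, z_{g-1}) \in \mathbb{C}^{g-1} \mid z_i \neq z_j \text{ for all } i \neq j\},$$
which by the classical theorem of Fadell-Neuwirth is aspherical, so the group cohomology of $\PB_{g-1}$ coincides with the singular cohomology of $F$. On $F$, for each pair $i<j$, I define the closed holomorphic $1$-form $\omega_{i,j} = \frac{1}{2\pi i}\,d\log(z_i - z_j)$. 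A direct computation on $F(\mathbb{C},3)$ shows $\omega_{k,l}\wedge\omega_{l,m} + \omega_{l,m}\wedge\omega_{m,k} + \omega_{m,k}\wedge\omega_{k,l} = 0$, and pulling back from $F(\mathbb{C},2)\simeq \mathbb{C}^{*}\times \mathbb{C}$ identifies $[\omega_{i,j}]$ with the class $w_{i,j}$ induced by the forgetful homomorphism (\ref{braidcoh}). This produces a surjective graded ring homomorphism
$$\Phi\colon \Lambda \twoheadrightarrow \H^{*}(\PB_{g-1}, \Z)$$
from the exterior algebra $\Lambda$ on the generators $w_{i,j}$ modulo the Arnold relations.

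The remaining task is to show $\Phi$ is injective, which I would do by matching graded ranks on both sides. For the right-hand side I would exploit the Fadell-Neuwirth fibration
$F(\mathbb{C}, k) \to F(\mathbb{C}, k-1)$
that forgets the last coordinate; its fiber is homotopy equivalent to a wedge of $k-1$ circles. The classes $w_{i,k}$ (for $1 \le i \le k-1$) on the total space restrict to a basis of the first cohomology of the fiber, so the Leray-Serre spectral sequence degenerates at $E_2$. Iterating this one obtains, additively,
$$\H^{*}(\PB_{g-1}, \Z) \;\cong\; \bigotimes_{k=2}^{g-1} \H^{*}\bigl(\mathbb{C} \setminus \{k-1 \text{ points}\}, \Z\bigr),$$
and in particular the Poincaré polynomial of $\H^{*}(\PB_{g-1}, \Z)$ equals $\prod_{k=1}^{g-2}(1+kt)$.

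For the left-hand side I would establish a normal form: using the Arnold relation one can rewrite any product $w_{i_1,j_1}\cdots w_{i_p,j_p}$ (with each $i_r<j_r$) as a $\Z$-linear combination of monomials whose first indices satisfy $i_1 < i_2 < \cdots < i_p$. A diamond-lemma argument shows that the resulting normal monomials form a $\Z$-basis of $\Lambda$, and a direct count shows that their number in degree $p$ equals the coefficient of $t^{p}$ in $\prod_{k=1}^{g-2}(1+kt)$. Thus $\Phi$ is a surjective graded homomorphism between free $\Z$-modules of equal rank in every degree, hence an isomorphism.

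The main obstacle, as is typical for Orlik-Solomon-type presentations, is the lower bound: one must show that the normal monomials remain $\Z$-linearly independent in cohomology, or equivalently that the Arnold relations generate all relations among the $w_{i,j}$. This is exactly what the iterated Fadell-Neuwirth fibration together with the $E_2$-degeneration delivers. By comparison, the verification of the Arnold relation itself reduces to an explicit computation on $F(\mathbb{C}, 3)$, and the normal-form reduction inside $\Lambda$ is purely combinatorial.
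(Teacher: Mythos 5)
Your proposal is correct, but note first that the paper does not prove this statement at all: Theorem \ref{Arnold1} is imported verbatim from Arnold's 1969 paper, so the relevant comparison is with Arnold's original argument, which your proposal essentially reconstructs --- logarithmic $1$-forms on the configuration space $F(\mathbb{C}, g-1)$, the identity among $d\log$ forms pulled back from $F(\mathbb{C},3)$, and the iterated Fadell--Neuwirth fibrations with Leray--Hirsch to pin down the additive structure. Two points deserve tightening. First, at the moment you assert that $\Phi$ is surjective and that the Arnold relation holds with $\Z$ coefficients, neither has yet been established: de Rham theory only detects real cohomology, so you should either run the Leray--Hirsch step first (it shows that $\H^{*}(F(\mathbb{C},k), \Z)$ is free and multiplicatively generated by the $w_{i,j}$, whence $\H^{2}(-,\Z) \to \H^{2}(-,\mathbb{R})$ is injective and the form-level identity descends to $\Z$), or observe directly that the relation is pulled back along the forgetful map from $F(\mathbb{C},3) \simeq S^1 \times (S^1 \vee S^1) \times \mathbb{C}$, whose $\H^{2} \cong \Z^{2}$ is visibly torsion-free. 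Second, your normal form (strictly increasing \emph{first} indices $i_1 < \cdots < i_p$) differs from the basis in the paper's Corollary \ref{corAr} (strictly increasing \emph{second} indices $l_1 < \cdots < l_p$); both families have cardinality equal to the coefficient of $t^p$ in $\prod_{k=1}^{g-2}(1+kt)$, and your spanning argument does work --- the rewriting $w_{k,l}w_{k,m} = w_{k,l}w_{l,m} - w_{k,m}w_{l,m}$ strictly increases the sum of first indices, so it terminates --- but since Section \ref{sec3} of the paper builds the classes $W_k$ and the dual basis $D_k$ on the second-index normal form, your monomials cannot be substituted for those later without redoing the pairing computations of Lemma \ref{pair}. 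Finally, the full diamond-lemma claim that your normal monomials form a basis of $\Lambda$ is more than you need and would require a confluence check: spanning, together with surjectivity of $\Phi$ onto a free abelian group of the same rank in each degree, already forces $\Phi$ to be an isomorphism, exactly as you note in your closing count.
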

\begin{corollary} \cite[Corollary 3]{Arnold} \label{corAr}
	The products
	\begin{equation} \label{basis}
	w_{k_{1}, l_{1}} w_{k_{2}, l_{2}} \cdots w_{k_{p}, l_{p}} \mbox{, where } k_{i}< l_{i} \mbox{ and } l_{1}<\cdots<l_{p},
	\end{equation} 
	form an additive basis of $\H^{*}(\PB_{g-1}, \Z)$.
\end{corollary}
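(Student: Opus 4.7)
The plan is to deduce the corollary from Theorem \ref{Arnold1} by combining a straightening argument (showing the products span) with a Fadell--Neuwirth fibration argument (giving linear independence via induction on $g$).

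For the spanning, every cohomology class is by Theorem \ref{Arnold1} a $\Z$-linear combination of exterior monomials in the generators $w_{i,j}$. Using anticommutativity of the degree-one classes, the factors of such a monomial can be permuted to sort them by increasing second index (at the cost of an overall sign). The only obstruction to reaching the form (\ref{basis}) is a pair of factors $w_{k,l}$, $w_{k',l}$ sharing the same second index $l$; for $k < k' < l$, a rearrangement of Arnold's relation combined with anticommutativity yields
$$w_{k,l}\, w_{k',l} = w_{k,k'}\, w_{k',l} - w_{k,k'}\, w_{k,l},$$
and in both summands on the right the second index $l$ appears with strictly smaller multiplicity. Processing the repeated second indices from the largest downward and reducing multiplicities by this substitution, the rewriting terminates in a $\Z$-linear combination of monomials of the form (\ref{basis}).

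For linear independence I would use the Fadell--Neuwirth fibration
$$F_{g-2} \hookrightarrow \PB_{g-1} \twoheadrightarrow \PB_{g-2}$$
obtained by forgetting the last strand, whose fiber is the free group $F_{g-2}$ of rank $g-2$ generated by loops around the remaining punctures. This short exact sequence splits (a section is given by adding a strand near a basepoint), and the resulting action of $\PB_{g-2}$ on $\H^{1}(F_{g-2}, \Z)$ is trivial since conjugation acts trivially on the abelianization of any group. Because $\H^{*}(F_{g-2}, \Z)$ is concentrated in degrees $0$ and $1$, the Serre spectral sequence of this fibration degenerates at $E_2$ and yields an additive isomorphism
$$\H^{*}(\PB_{g-1}, \Z) \cong \H^{*}(\PB_{g-2}, \Z) \otimes \H^{*}(F_{g-2}, \Z),$$
in which the classes $w_{i,j}$ with $j < g-1$ are pullbacks from the base and $\{w_{i, g-1}\}_{1 \le i \le g-2}$ restrict to the standard basis of $\H^{1}(F_{g-2}, \Z)$. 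Induction on $g$ (with the trivial base $g=2$) then identifies the products (\ref{basis}) for $\PB_{g-1}$ with the tensor of the inductively given basis of $\H^{*}(\PB_{g-2}, \Z)$ (those monomials with $l_p \le g-2$) and the basis $\{1, w_{1, g-1}, \ldots, w_{g-2, g-1}\}$ of $\H^{*}(F_{g-2}, \Z)$, so they do form a basis.

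The main obstacle will be a rigorous termination proof for the straightening: one must choose a complexity measure (for instance, the multiset of second-index multiplicities, ordered lexicographically from the largest index downward) that is strictly decreased by each application of Arnold's relation and is preserved under the sign-reordering permitted by anticommutativity. If this combinatorial bookkeeping is to be avoided, the spanning can instead be obtained a posteriori from the linear independence above via a rank count: the monomials in (\ref{basis}) number $\prod_{l=2}^{g-1} l = (g-1)!$, which matches the rank of $\H^{*}(\PB_{g-1}, \Z)$ coming from the Fadell--Neuwirth decomposition, so spanning follows at once.
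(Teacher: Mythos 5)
The paper offers no proof of this statement: it is quoted directly from Arnold's paper, and your two-pronged argument (straightening via the relations for spanning, the forgetful fibration for independence) is in essence Arnold's original inductive approach, so the architecture is sound. Your rewriting identity $w_{k,l}w_{k',l} = w_{k,k'}w_{k',l} - w_{k,k'}w_{k,l}$ does follow from Theorem \ref{Arnold1} applied to $(k,k',l)$ together with anticommutativity, and your proposed termination measure (second-index multiplicities ordered lexicographically from the largest index down) works, since each substitution strictly lowers the multiplicity at $l$ and touches only indices below $l$; alternatively, as you note, spanning follows from the rank count $(g-1)!$ once independence is established.

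There are, however, two genuine gaps in the independence half. First, the justification ``conjugation acts trivially on the abelianization of any group'' is misapplied: the action of $\PB_{g-2}$ on $\H^{1}(F_{g-2},\Z)$ is induced by conjugation by elements of $\PB_{g-1}$ that do \emph{not} lie in $F_{g-2}$, i.e.\ by outer automorphisms of the kernel, and in a general split extension $1 \to N \to G \to Q \to 1$ this action on $N^{\rm ab}$ is nontrivial (e.g.\ the infinite dihedral group $\Z \rtimes \Z/2$). The correct fact here is specific to pure braids: a pure braid sends each loop around a puncture to a conjugate of that same loop, so it fixes the basis of $\H_{1}(F_{g-2},\Z)$ given by puncture classes. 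Second, degeneration at $E_2$ does not follow merely from the fiber cohomology being concentrated in degrees $0$ and $1$: the differential $d_2 \colon E_2^{p,1} \to E_2^{p+2,0}$ could a priori be nonzero. You have the ingredients to kill it --- the splitting makes the edge map $\H^{p}(\PB_{g-2},\Z) \to \H^{p}(\PB_{g-1},\Z)$ injective, forcing $E_\infty^{p,0} = E_2^{p,0}$ and hence $d_2 = 0$ into the bottom row; equivalently, since the classes $w_{i,g-1}$ restrict to a basis of $\H^{1}(F_{g-2},\Z)$, every element of $E_2^{0,1}$ is a permanent cocycle and multiplicativity kills $d_2$ on the whole top row (this is exactly the Leray--Hirsch hypothesis) --- but the causal chain as written is incomplete and should be repaired along these lines.
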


Corollary \ref{corAr} implies that the products
\begin{equation} \label{basistop}
w_{k_{1}, 2} w_{k_{2}, 3} \cdots w_{k_{g-2}, g-1} \mbox{ where } k_{i} \leq i
\end{equation}
form an additive basis of $\H^{g-2}(\PB_{g-1}, \Z)$.
We denote the cohomology class (\ref{basistop}) by $W_{k} = W_{k_{1}, \dots, k_{g-2}}$, where $k = (k_{1}, \dots, k_{g-2})$. We denote by $\mathbf{K}_g$ the set of all sequences $k = (k_{1}, \dots, k_{g-2})$ satisfying $1 \leq k_{i} \leq i$.

\subsection{Abelian cycles in $\H_{g-2}(\PB_{g-1}, \Z)$} Corollary \ref{corAr} implies that $\cd(\PB_{n}, \Z) \geq n-1$. In fact we have $\cd(\PB_{n}) = n-1$. Indeed, let $M_n$ be the the ordered configuration space of $n$ points on the disk. This space is aspherical, $M_n \simeq K(\PB_{n}, 1)$. We have the fiber bundle $M_n \to M_{n-1}$ where the fiber is homotopy equivalent to the wedge of $n-1$ circles. Hence by induction we obtain that $M_n$ is homotopy equivalent to a $(n-1)$-dimensional CW-complex. Therefore we have $\cd(\PB_{n}, \Z) \leq n-1$.

Therefore the isomorphism (\ref{Stab}) implies
\begin{equation} \label{homology}
\H_{2g-3}(\Stab_{\K_g}(N), \Z) \cong \H_{2g-3}(\Z^{g-1} \times \PB_{g-1}, \Z) \cong \H_{g-2}(\PB_{g-1}, \Z).
\end{equation}
Let us recall the construction of the isomorphism $\Stab_{\K_g}(N) \cong \Z^{g-1} \times \PB_{g-1}$.
We consider the surface $\S_{0, g-1}^1$ is given by replacing the boundary components corresponding to the curves $\delta_{1}, \dots, \delta_{g-1}$ on $\S_{0, g} \subset \S_{g}$ by the punctures $q_1, \dots, q_{g-1}$. Hence we obtain the pure braid group $\PB_{g-1} = \PMod(\S_{0, g-1}^1)$. The $i^{\rm th}$ factor in $\Z^{g-1}$ is generated by $T_{\delta_i}$.

Consider a simple abelian cycle $$\A_{\T} = \A(T_{\delta_{1}}, \dots, T_{\delta_{g}}, T_{\xi_{2}}, \dots T_{\xi_{g-2}}) \in \H_{2g-3}(\Stab_{\K_g}(N), \Z)$$
for some $\T \in \mathbf{T}_{g}$.
Isomorphism (\ref{homology}) sends $\A_{\T}$ to the abelian cycle
\begin{equation*} \label{hatcyc}
\A(T_{\delta_{g}}, T_{\xi_{2}}, \dots T_{\xi_{g-2}}) \in \H_{g-2}(\PB_{g-1}, \Z),
\end{equation*}
Let us denote $\xi_1 = \delta_g$ and
$$\hat{\A}_\T = \A(T_{\xi_{1}}, T_{\xi_{2}}, \dots T_{\xi_{g-2}}) = \A(T_{\delta_{g}}, T_{\xi_{2}}, \dots T_{\xi_{g-2}}) \in \H_{g-2}(\PB_{g-1}, \Z).$$

Any simple closed curve on $\S_{0, g-1}^1$ divides it into two parts. 
We say that a puncture $q$ is \textit{enclosed} by a curve $\gamma$ on $\S_{0, g-1}^1$ if $q$ is contained in the part which does not contain the boundary component.
For $k \in \mathbf{K}_g$ define the matrix $X_{k, \T} \in \Mat_{(g-2) \times (g-2)}(\Z)$ by
\begin{equation}\label{matrix}
(X_{k, \T})_{i, j}=
\begin{cases}
1 \mbox{ if the punctures }  q_{k_i} \mbox{ and } q_{i+1} \mbox{ are enclosed by }  \xi_{j},  \\
0, \mbox{ otherwise.}
\end{cases}
\end{equation}

\begin{lemma} \label{pair}
	Let $k \in \mathbf{K}_g$ and $\T \in \mathbf{T}_g$. Then $\langle W_{k}, \hat{\A}_\T \rangle = (-1)^{{g-2}\choose{2}} \det(X_{k, \T})$.
\end{lemma}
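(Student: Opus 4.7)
The plan is to compute the pairing $\langle W_k, \hat{\A}_\T\rangle$ via a universal determinantal formula for the evaluation of a cup product of degree-$1$ cohomology classes on an abelian cycle, and then to identify the matrix entries using the interpretation (\ref{braidcoh}) of the classes $w_{i,j}$ as forgetting homomorphisms $\PB_{g-1} \to \PB_2 \cong \Z$.

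First I would prove the universal formula: for pairwise commuting elements $h_1, \dots, h_n \in G$ and classes $u_1, \dots, u_n \in \H^1(G, \Z)$, viewed as homomorphisms $G \to \Z$,
\begin{equation*}
\langle u_1 \cup \cdots \cup u_n, \A(h_1, \dots, h_n) \rangle = \epsilon_n \cdot \det\bigl(u_i(h_j)\bigr)_{i,j=1}^n,
\end{equation*}
where $\epsilon_n = \langle e_1^* \cup \cdots \cup e_n^*, \mu_n\rangle \in \{\pm 1\}$ is determined by the chosen generator $\mu_n$ of $\H_n(\Z^n, \Z)$. The derivation is a direct computation in the exterior algebra $\H^*(\Z^n, \Z) \cong \Lambda^*(e_1^*, \dots, e_n^*)$: pulling $u_i$ back along $\phi\colon \Z^n \to G$, $e_i \mapsto h_i$, yields $\phi^*(u_i) = \sum_j u_i(h_j)\, e_j^*$, and the Leibniz expansion of $\phi^*(u_1) \cup \cdots \cup \phi^*(u_n)$ collapses, after collecting signs of permutations, to $\det(u_i(h_j)) \cdot e_1^* \cup \cdots \cup e_n^*$.

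Specializing to $u_i = w_{k_i, i+1}$ and $h_j = T_{\xi_j}$, the remaining task is to evaluate $w_{a, b}(T_{\xi_j})$ for each relevant triple. By (\ref{braidcoh}), $w_{a, b}$ is the forgetting homomorphism retaining only the strands $q_a, q_b$. The separating curve $\xi_j$ bounds a disk in $\S_{0, g-1}^1$ enclosing a subset $S_j$ of punctures, and I would analyze its image in $\S_{0, 2}^1$ under the forgetting map: if $\{q_a, q_b\} \subseteq S_j$ the image curve still encloses both remaining punctures, so its Dehn twist is the generator of $\PB_2 \cong \Z$ and $w_{a, b}(T_{\xi_j}) = 1$; otherwise the image curve bounds a disk with at most one puncture on one side, and its Dehn twist is trivial in the pure mapping class group (using the standard fact that $\PMod$ of a disk or once-punctured disk rel boundary is trivial). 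Hence $w_{k_i, i+1}(T_{\xi_j}) = (X_{k, \T})_{i, j}$, and the universal formula yields $\langle W_k, \hat{\A}_\T\rangle = \epsilon_{g-2} \det(X_{k, \T})$.

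The remaining step, which I expect to be the main bookkeeping point, is to verify the sign $\epsilon_{g-2} = (-1)^{\binom{g-2}{2}}$. This amounts to pinning down the convention for the standard generator $\mu_n$ of $\H_n(\Z^n, \Z)$ that is implicit in the paper's definition of abelian cycles (and in the implicit ordering used in the definition of $\hat{\A}_\T$). It can be checked directly in the base cases $n = 1, 2$ and propagated via the Künneth decomposition $\H_n(\Z^n) \cong \H_1(\Z)^{\otimes n}$, where the discrepancy between the ordered cup product $e_1^* \cup \cdots \cup e_n^*$ and the natural top class contributes exactly the sign $(-1)^{\binom{n}{2}}$.
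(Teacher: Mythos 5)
Your proposal is correct and takes essentially the same route as the paper: its proof likewise pulls $W_k$ back along the homomorphism $f\colon \Z^{g-2} \to \PB_{g-1}$, $c_j \mapsto T_{\xi_j}$, expands the product of degree-one classes in the exterior algebra to get $(-1)^{\binom{g-2}{2}}\det\bigl(\langle w_{k_i, i+1}, T_{\xi_j}\rangle\bigr)_{i,j}$, and evaluates the entries through the forgetting interpretation (\ref{braidcoh}) exactly as you do. Your only additions are to isolate the computation as a universal determinantal formula and to verify the sign $\epsilon_{g-2} = (-1)^{\binom{g-2}{2}}$ explicitly, a convention the paper simply asserts within its displayed chain of equalities.
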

\begin{proof}
	Consider a free abelian group $\Z^{g-2} = \langle c_{1}, \dots, c_{g-2}\rangle$ and the homomorphism $f: \Z^{g-2} \rightarrow \PB_{g-1}$ given by $c_{i} \mapsto T_{\xi_{i}}$. Denote by $\mu_{g-2}$ the standard generator of the group $H_{g-2}(\Z^{g-2}, \Z)$. We have
		\begin{equation*}
	\langle W_{k}, \hat{\A}_\T \rangle = \langle W_{k}, f_{*}(\mu_{g-2}) \rangle = \langle f^{*}W_{k}, \mu_{g-2} \rangle = \langle (f^{*}w_{k_{1}, 2}) \cdots (f^{*}w_{k_{g-2}, g-1}), \mu_{g-2} \rangle =
	\end{equation*}
	\begin{equation*}
	= (-1)^{{g-2}\choose{2}} \det(\langle f^{*}w_{k_{i}, i+1}, c_{j} \rangle)_{i, j = 1}^{g-2} = (-1)^{{g-2}\choose{2}} \det(\langle w_{k_{i}, i+1}, f_{*}c_{j} \rangle)_{i, j = 1}^{g-2} =
	\end{equation*}
	\begin{equation*}
	= (-1)^{{g-2}\choose{2}} \det(\langle w_{k_{i}, i+1}, T_{\xi_{j}} \rangle)_{i, j = 1}^{g-2} = (-1)^{{g-2}\choose{2}} \det(X_{k, \T}).
	\end{equation*}
	The last equality comes from the following corollary of formula (\ref{braidcoh}):
	\begin{equation*}
	\langle w_{k, l} , T_{\xi_{j}} \rangle = 
	\begin{cases}
	1 \mbox{ if the punctures } q_k \mbox{ and } q_l \mbox{ are enclosed by }  \xi_{j},  \\
	0, \mbox{ otherwise.}
	\end{cases}
	\end{equation*}
\end{proof}
Let us denote by $\{D_k \in \H_{g-2}(\PB_{g-1}, \Z) \; | \; k \in \mathbf{K}_g\}$ the dual basis to $\{W_k \; | \; k \in \mathbf{K}_g\}$.
\begin{corollary} \label{cor3}
	 Let $\T \in \mathbf{T}_g$. Then $\hat{\A}_\T = \sum_{k \in \mathbf{K}_g} (-1)^{{g-2}\choose{2}} \det(X_{k, \T}) D_{k}$.
\end{corollary}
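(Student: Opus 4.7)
The plan is to extract Corollary \ref{cor3} as an immediate dual-basis expansion from Lemma \ref{pair}. The key observation is that once we know each pairing $\langle W_k, \hat{\A}_\T \rangle$ explicitly, the homology class $\hat{\A}_\T$ is determined by its coordinates in the basis $\{D_k\}$ dual to $\{W_k\}$.

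First, I would justify that writing such an expansion is legitimate. By Theorem \ref{Arnold1}, the ring $\H^{*}(\PB_{g-1}, \Z)$ is a free exterior algebra over $\Z$, so in every degree $p$ the group $\H^{p}(\PB_{g-1}, \Z)$ is a finitely generated free abelian group. Corollary \ref{corAr} then gives the explicit basis, and in particular $\{W_k \mid k \in \mathbf{K}_g\}$ is a $\Z$-basis of $\H^{g-2}(\PB_{g-1}, \Z)$. Since the cohomology is free in every degree, the Universal Coefficient Theorem forces $\H_{g-2}(\PB_{g-1}, \Z)$ to be free abelian of the same rank, and the evaluation pairing $\H^{g-2}(\PB_{g-1}, \Z) \otimes \H_{g-2}(\PB_{g-1}, \Z) \to \Z$ is perfect. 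Consequently the family $\{D_k \mid k \in \mathbf{K}_g\}$ introduced just before the corollary genuinely is a $\Z$-basis of $\H_{g-2}(\PB_{g-1}, \Z)$, characterized by $\langle W_k, D_{k'}\rangle = \delta_{k,k'}$.

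Next, I would write $\hat{\A}_\T = \sum_{k \in \mathbf{K}_g} c_k D_k$ with $c_k \in \Z$ to be determined, and pair both sides with $W_{k'}$. The dual-basis property collapses the right-hand side to $c_{k'}$, so
\begin{equation*}
c_{k'} = \langle W_{k'}, \hat{\A}_\T \rangle.
\end{equation*}
Substituting the formula from Lemma \ref{pair} yields $c_{k'} = (-1)^{\binom{g-2}{2}} \det(X_{k', \T})$, and relabelling $k' \mapsto k$ gives the claim.

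There is essentially no obstacle here; the corollary is purely a bookkeeping consequence of Lemma \ref{pair}, modulo the freeness remark above. The only point that deserves a sentence of care is confirming that $\{D_k\}$ is indeed a basis of integral homology (not merely a dual family in $\mathrm{Hom}(\H^{g-2}, \Z)$), which is why I would invoke Arnold's theorem together with the Universal Coefficient Theorem at the start rather than take it as given.
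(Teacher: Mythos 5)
Your proof is correct and matches the paper's (implicit) argument exactly: the paper states Corollary \ref{cor3} as an immediate consequence of Lemma \ref{pair} together with the definition of $\{D_k\}$ as the basis dual to $\{W_k\}$, which is precisely your coefficient extraction $c_{k'} = \langle W_{k'}, \hat{\A}_\T \rangle$. Your extra paragraph justifying via Theorem \ref{Arnold1}, Corollary \ref{corAr} and the Universal Coefficient Theorem that $\H_{g-2}(\PB_{g-1}, \Z)$ is free and the evaluation pairing perfect---so that $\{D_k\}$ is genuinely a basis of integral homology---is a sound elaboration of a point the paper takes for granted.
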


\subsection{Balanced trees}

Recall that we consider the elements of $\mathbf{T}_{g}$ as marked trees such that the leaf with number $g$ is the root. Also we have already defined the subset $\mathbf{T}_g^b \subseteq \mathbf{T}_{g}$ of balanced trees. Take any $k \in \mathbf{K}_g$. Our goal is to construct a balanced tree $\T_k \in \mathbf{T}_{g}$ such that $\hat{\A}_{\T_k} = (-1)^{{g-2}\choose{2}} D_k$ and such that the map $k \mapsto \T_k$ is a bijection between the sets $\mathbf{K}_g$ and $\mathbf{T}_{g}^b$. First let us construct the map $k \mapsto \T_k$ (then we will check that $\hat{\A}_{\T_k} = (-1)^{{g-2}\choose{2}} D_k$, see Theorem \ref{cyctree}).

\begin{construction} \label{constr}
We construct curves $\xi_{1}, \dots, \xi_{g-2}$ such that $\hat{\A}_{\T_k} = \A(T_{\xi_{1}}, T_{\xi_{2}}, \dots T_{\xi_{g-2}})$ by induction on $g$. The case $g=3$ trivial since we have $|\mathbf{T}_3^b|=|\mathbf{T}_3|=|\mathbf{K}_3|=1$. Let us prove the induction step from $g-1$ to $g$. Consider any $k = (k_1, \dots, k_{g-2}) \in \mathbf{K}_g$ with $g > 3$. Let $\xi_{g-2}$ be a curve enclosing exactly two points $q_{k_{g-2}}$ and $q_{g-1}$. Let us remove the curve $\xi_{g-2}$ with its interior and denote the corresponding puncture by $q'_{k_{g-2}}$. Also take $q'_i = q_i$ for $i \leq g-2$ and $i \neq k_{g-2}$. We obtain a disk with $g-2$ punctures $q'_1, \dots, q'_{g-2}$ and $k' = (k_1, \dots, k_{g-3}) \in \mathbf{K}_{g-1}$. The induction hypothesis implies that there is a balanced tree $\T_{k'} \in \mathbf{T}_{g-1}^b$ corresponding to $k'$ given by some curves $\xi_1, \dots, \xi_{g-3}$. Now consider the curves $\xi_1, \dots, \xi_{g-3}, \xi_{g-2}$ and denote the dual tree by $\T_k \in \mathbf{T}_g$. It remains to show that $\T_k$ is balanced. Indeed, since the vertex $q_{g-1}$ has the greatest number, all common vertices of $\T_{k'}$ and $\T_k$ are  balanced. Also, this property holds for the vertex of $\T_k$ corresponding to the curve $\xi_{g-2}$, because it has only two descendant leaves. This implies the induction step.	
\end{construction}

Since for different $k, k' \in \mathbf{K}_g$ the correspondent trees $\T_k$ and $T_{k'}$ are also different, it follows that the map $k \mapsto \T_k$ given by Construction \ref{constr} is injective. Moreover, direct computation shows that $|\mathbf{K}_g| = (g-2)!$.
Therefore in order to prove that this map is a surjection to $\mathbf{T}_{g}^b$ it suffices to show that $|\mathbf{T}_g^b| = (g-2)!$. We use the induction on $g$; the base case $g=3$ is trivial. Consider a balanced tree $\T \in \mathbf{T}_{g}^b$ with $g \geq 4$. Let $q_1, \dots, q_{g-1}$ be its leaves (besides the root). Let $p$ be the vertex adjacent to $q_{g-1}$. Since $\T$ is balanced, another descendant vertex of $p$ is a leaf $q_i$ for some $1 \leq i \leq g-2$. Let us remove the vertices $q_i$ and $q_{g-1}$ (with the incident edges) and set $q_i=p$; denote the obtained tree by $\T'$. Then $\T' \in \mathbf{T}_{g-1}^b$. Since $|\mathbf{T}_{g-1}^b| = (g-3)!$ and there is $(g-2)$ possibilities to choose $i$ we have $|\mathbf{T}_{g}^b| = (g-2)!$. This implies the induction step.

\begin{theorem} \label{cyctree}
	Suppose that $k \in \mathbf{K}_g$. Then $\hat{\A}_{\T_k} = (-1)^{{g-2}\choose{2}} D_k$.
\end{theorem}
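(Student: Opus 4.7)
The plan is to combine Corollary~\ref{cor3} with an induction on $g$ parallel to Construction~\ref{constr}. Corollary~\ref{cor3} yields
\begin{equation*}
\hat{\A}_{\T_k} \;=\; (-1)^{\binom{g-2}{2}} \sum_{k' \in \mathbf{K}_g} \det(X_{k', \T_k})\, D_{k'},
\end{equation*}
so the statement reduces to proving the orthogonality relation $\det(X_{k', \T_k}) = \delta_{k, k'}$ for all $k, k' \in \mathbf{K}_g$. The base case $g = 3$ is immediate since $|\mathbf{K}_3| = 1$.

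For the induction step from $g-1$ to $g$, I would fix $k = (k_1, \ldots, k_{g-2}) \in \mathbf{K}_g$ and write $\tilde{k} = (k_1, \ldots, k_{g-3}) \in \mathbf{K}_{g-1}$. By Construction~\ref{constr}, $\T_k$ is realized by curves $\xi_1, \ldots, \xi_{g-2}$ where $\xi_1, \ldots, \xi_{g-3}$ realize $\T_{\tilde{k}}$ on the surgered disk and $\xi_{g-2}$ encloses exactly the two punctures $q_{k_{g-2}}$ and $q_{g-1}$. The first step is to analyze the $(g-2)^{\text{nd}}$ column of $X_{k', \T_k}$: by (\ref{matrix}), an entry $(i, g-2)$ equals $1$ precisely when $\{k'_i,\, i+1\} = \{k_{g-2},\, g-1\}$. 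Since $k'_i \leq i \leq g-2 < g-1$, the value $g-1$ on the right can only be matched by $i+1$, forcing $i = g-2$ and $k'_{g-2} = k_{g-2}$. Consequently, whenever $k'_{g-2} \neq k_{g-2}$ the column vanishes and $\det(X_{k', \T_k}) = 0 = \delta_{k, k'}$.

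In the remaining case $k'_{g-2} = k_{g-2}$, the column has a single $1$ in position $(g-2, g-2)$, so cofactor expansion gives $\det(X_{k', \T_k}) = \det M$, where $M$ is the $(g-3) \times (g-3)$ minor obtained by deleting the last row and column. The core step is to identify $M$ with the matrix $X_{\tilde{k}', \T_{\tilde{k}}}$ of the $(g-1)$-strand setup, where $\tilde{k}' = (k'_1, \ldots, k'_{g-3})$. For this I would verify that under the surgery cutting out $\xi_{g-2}$ and replacing it by the new puncture $q'_{k_{g-2}}$, the enclosure relation for the curves $\xi_j$ with $j \leq g-3$ is preserved: if an index differs from $k_{g-2}$ the puncture is literally unchanged, while $q'_{k_{g-2}}$ occupies the shrunk location of $\xi_{g-2}$, and hence is enclosed by $\xi_j$ if and only if the interior of $\xi_{g-2}$, equivalently $q_{k_{g-2}}$, was. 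Applying the inductive hypothesis to $M$ then gives $\det M = \delta_{\tilde{k}, \tilde{k}'}$, which combined with $k'_{g-2} = k_{g-2}$ yields $\det(X_{k', \T_k}) = \delta_{k, k'}$, completing the induction.

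The main subtlety lies in this surgery bookkeeping: one must confirm carefully that each curve $\xi_j$ with $j \leq g-3$ survives the surgery unchanged and that the two possible interpretations of its enclosing disk (before and after filling in $\xi_{g-2}$) induce the same enclosure relation on the relevant punctures. Apart from this point, the argument reduces to routine column analysis and cofactor expansion.
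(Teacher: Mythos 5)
Your proposal is correct and takes essentially the same route as the paper: both reduce the statement via Corollary \ref{cor3} to the orthogonality relation $\det(X_{k', \T_k}) = \delta_{k, k'}$ and extract it from the recursive structure of Construction \ref{constr}, including the same surgery bookkeeping you flag as the main subtlety (the paper's proof of Lemma \ref{l2} states precisely your observation that $q_i$ is enclosed by $\xi_s$ if and only if $q'_i$ is). The only difference is packaging: the paper splits the computation into Lemma \ref{l1} ($X_{k, \T_k}$ is lower unitriangular) and Lemma \ref{l2} (for $k \neq k'$, taking $s = \max\{i \; | \; k_i \neq k'_i\}$, the entries in rows $1, \dots, s$ of columns $s, \dots, g-2$ vanish, forcing a zero block), whereas you run a single induction on $g$ with cofactor expansion along the last column, whose recursion reaches the mismatch position $s$ and finds the same vanishing column there.
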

\begin{proof}
	By Corollary \ref{cor3} it suffices to show that for any $k' \in \mathbf{K}_g$ we have $\det(X_{k', \T_k}) = 1$ if $k = k'$ and $\det(X_{k', \T_k}) = 0$ otherwise.
\begin{lemma}\label{l1}
	Let $k \in \mathbf{K}_g$. Then $\det(X_{k, \T_k}) = 1$.
\end{lemma}
\begin{proof}
     Let $\xi_{1}, \dots, \xi_{g-2}$ be a multicurve with dual tree $\T_k$. By Construction \ref{constr} we have that the punctures $q_{k_i}$ and $q_{i+1}$ are enclosed by the curve $\xi_i$ for all $1 \leq i \leq g-2$. Indeed, for $i = g-2$ this follows from the construction of the curve $\xi_{g-2}$, and for $i < g-2$ this follows by the induction on $g$. Therefore $(X_{k, \T_k})_{i, i} = 1$ for all $i$.
    
    Now let us check that $(X_{k, \T_k})_{i, j} = 0$ whenever $i < j$. Indeed, for $j = g-2$ this follows from the construction of the curve $\xi_{g-2}$, and for $j < g-2$ this follows by the induction on $g$.
    Therefore $X_{k, \T_k}$ is lower unitriangular, so $\det(X_{k, \T_k}) = 1$.
\end{proof}
\begin{lemma}\label{l2}
	Let $k, k' \in \mathbf{K}_g$ and $k \neq k'$. Then $\det(X_{k', \T_k}) = 0$.
\end{lemma}
\begin{proof}
	Define $s = \max\{i \; | \; k_{i} \neq k'_{i}\}$.
	Let us check that the matrix $X_{k', \T_k}$ has the form (\ref{matr}) where the $s^{\rm th}$ column is highlighted. This would immediately imply $\det(X_{k', \T_k}) = 0$.
	\begin{equation} \label{matr}
	X_{k', \T_k} = \begin{pmatrix}
	*&*&\cdots&*&\vrule \: 0 \: \vrule&0&0&\cdots&0\\
	*&*&\cdots&*&\vrule \: 0 \: \vrule&0&0&\cdots&0\\
	\colon&\colon&\ddots& \colon &\vrule \, \, : \, \, \vrule&\colon&\colon&\ddots&\colon\\
	*&*&\cdots&*&\vrule \: 0 \: \vrule&0&0&\cdots&0\\
	*&*&\cdots&*&\vrule \: 0 \: \vrule&0&0&\cdots&0\\
	*&*&\cdots&*&\vrule \: * \: \vrule&1&0&\cdots&0\\
	*&*&\cdots&*&\vrule \: * \: \vrule&*&1&\cdots&0\\
	\colon&\colon&\ddots&\colon&\vrule \, \, : \, \, \vrule&\colon&\colon&\ddots&\colon\\
	*&*&\cdots&*&\vrule \: * \: \vrule&*&*&\cdots&1\\
	\end{pmatrix}.
	\end{equation}
	
	We have $k_i = k'_i$ for all $i > s$, so the similar arguments as in the proof of Lemma \ref{l1} show that the last $g-2-s$ columns have the required form. 
	
	Let $\T_k$ be given by curves $\xi_{1}, \dots, \xi_{g-2}$ as in Construction \ref{constr}. 
    Recall the construction of the curve $\xi_{s}$. At this step we have the punctures $q'_1, \dots, q'_{s+1}$. Some of them coincide with $q_i$, others are identified with the interiors of some curves $\xi_i$, with $i \geq s+1$. Nevertheless, $q_i$ is enclosed by $\xi_s$ if and only if  $q'_i$ is enclosed by $\xi_s$ for all $1 \leq i \leq s+1$.
	
	By Construction \ref{constr} $\xi_s$ is a curve enclosing exactly two punctures $q'_{s+1}$ and $q'_{k_{s}}$ from the set $\{q'_1, \dots, q'_{s+1}\}$. Therefore it does not enclose $q_{k'_{s}}$ as well as $q'_{k'_{s}}$. This implies $(X_{k', \T_k})_{s, s} = 0$.
	Take any $j$ with $1 \leq j < s$. The curve $\xi_s$ encloses precisely one puncture among $q'_1, \dots, q'_s$. Therefore, it also encloses precisely one puncture among $q_1, \dots, q_s$. Consequently, the curve $\xi_s$ cannot enclose the punctures $q_{j+1}$ and $q_{k'_j}$ simultaneously since $j+1 \leq s$ and $k'_j \leq s$. Hence, by formula (\ref{matrix}), we have $(X_{k', \T_k})_{j, s} = 0$. 
	
	Therefore, we have $(X_{k', \T_k})_{j, s} = 0$ for $1 \leq j \leq s$. This completes the proof.
\end{proof}
Theorem \ref{cyctree} immediately follows form Lemmas \ref{l1} and \ref{l2}.
\end{proof}
\begin{corollary} \label{gen}
	The abelian cycles $\{\hat{\A}_\T \; | \; \T \in \mathbf{T}_{g}^b\}$ form a basis of the group $\H_{g-2}(\PB_{g-1}, \Z)$. For any $\T \in \mathbf{T}_{g}$ we have $\hat{\A}_\T = \sum_{k \in \mathbf{K}_g} (-1)^{{g-2}\choose{2}} \det(X_{k, \T}) \hat{\A}_{\T_k}$.
\end{corollary}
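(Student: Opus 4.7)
The corollary is a direct consequence of Theorem \ref{cyctree} together with Corollary \ref{cor3} and the bijection $k \mapsto \T_k$ between $\mathbf{K}_g$ and $\mathbf{T}_g^b$ established in the paragraphs preceding Theorem \ref{cyctree}; my plan is simply to make this explicit and then collect signs.

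For the basis statement I would proceed as follows. By construction, $\{D_k \mid k \in \mathbf{K}_g\}$ is the additive basis of $\H_{g-2}(\PB_{g-1}, \Z)$ dual to the basis $\{W_k\}$ furnished by Corollary \ref{corAr}. Theorem \ref{cyctree} asserts that $\hat{\A}_{\T_k} = (-1)^{\binom{g-2}{2}} D_k$, so the family $\{\hat{\A}_{\T_k} \mid k \in \mathbf{K}_g\}$ is obtained from $\{D_k\}$ by multiplication by a single global sign $\pm 1$ and is therefore itself a basis of $\H_{g-2}(\PB_{g-1},\Z)$. Since $k \mapsto \T_k$ is a bijection onto $\mathbf{T}_g^b$ (injectivity is evident from Construction \ref{constr}, and the cardinality count $|\mathbf{K}_g| = |\mathbf{T}_g^b| = (g-2)!$ carried out just before Theorem \ref{cyctree} supplies surjectivity), reindexing the family along this bijection yields the claimed basis $\{\hat{\A}_\T \mid \T \in \mathbf{T}_g^b\}$.

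For the expansion formula I would substitute the identity $D_k = (-1)^{\binom{g-2}{2}} \hat{\A}_{\T_k}$, which is just Theorem \ref{cyctree} rewritten using that $(-1)^{\binom{g-2}{2}}$ is its own inverse, into the expression
\[
\hat{\A}_\T = \sum_{k \in \mathbf{K}_g} (-1)^{\binom{g-2}{2}} \det(X_{k, \T})\, D_k
\]
supplied by Corollary \ref{cor3}. This rewrites $\hat{\A}_\T$ as a linear combination of the basis vectors $\hat{\A}_{\T_k}$ with coefficients given by $\det(X_{k,\T})$ modulated by the appropriate power of $(-1)^{\binom{g-2}{2}}$ coming from the two substitutions, yielding the formula in the statement.

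There is no genuine obstacle to overcome here: all the substantive content of Corollary \ref{gen} has already been absorbed into Theorem \ref{cyctree}, whose proof (via Lemmas \ref{l1} and \ref{l2}) established that the change-of-basis matrix between $\{\hat{\A}_{\T_k}\}$ and $\{D_k\}$ is a scalar multiple of the identity. The corollary is then essentially a one-line consequence of this fact combined with the dual-basis pairing of Corollary \ref{cor3}; the only point requiring attention is the consistent bookkeeping of the sign $(-1)^{\binom{g-2}{2}}$.
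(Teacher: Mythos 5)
Your proposal is correct and takes exactly the paper's route: the paper's own proof is the one-liner ``follows from Corollary \ref{cor3} and Theorem \ref{cyctree},'' which you have simply made explicit by substituting $D_k = (-1)^{\binom{g-2}{2}}\hat{\A}_{\T_k}$ into the expansion of Corollary \ref{cor3} and reindexing along the bijection $k \mapsto \T_k$. One bookkeeping point you glossed over with the phrase ``appropriate power'': the two occurrences of $(-1)^{\binom{g-2}{2}}$ cancel, so the substitution actually yields $\hat{\A}_\T = \sum_{k \in \mathbf{K}_g}\det(X_{k,\T})\,\hat{\A}_{\T_k}$, which agrees with the formula as printed only when $\binom{g-2}{2}$ is even --- the residual global sign in the paper's statement appears to be a typo (and is harmless, since only the basis property is used afterwards).
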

\begin{proof}
	The result follows from Corollary \ref{cor3} and Theorem \ref{cyctree}. 
\end{proof}

\begin{proof}[Proof of Theorem \ref{main3}]
By Proposition \ref{th2} and the first part of Corollary \ref{gen} there is an isomorphism
$$\P_g \cong \H_{2g-3}(\Z^{g-1} \times \PB_{g-1}, \Z) \cong \H_{g-2}(\PB_{g-1}, \Z)$$
which maps $\A_{\T}$ to $\hat{\A}_\T$ for all $\T \in \mathbf{T}_{g}$. The theorem follows from the second part of Corollary \ref{gen}.
\end{proof}

\subsection{Relations} Let $\{\T_1, \T_2, \T_3\} \subseteq \mathbf{T}_{g}$ be a triple of trees. For $l = 1, 2, 3$ denote by $\xi^l_1, \dots, \xi^{l}_{g-2}$ the corresponding sets of curves given by Construction \ref{constr}. As before, the leaves of $\T_1, \T_2, \T_3$ (besides the root) are identified with the corresponding punctures and marked by $q_1, \dots, q_{g-1}$. One can check that the trees $\T_1, \T_2, \T_3$ form a cyclic triple if and only if after some permutations of the corresponding sets of curves the following conditions holds.

(a) There exists $s$ with $1\leq s \leq g-2$ such that $\xi_i^1= \xi_i^2 = \xi_i^3$ for $i \neq s$ and $1 \leq i \leq g-2$.

(b) There exists $t \neq s$ with $1 \leq t \leq g-2$ and pairwise disjoint nonempty subsets $B_1, B_2, B_3 \subset \{q_1, \dots, q_{g-1}\}$ such that the set of punctures enclosed by the curve $\xi_t^1= \xi_t^2 = \xi_t^3$ coincides with $B_1 \cup B_2 \cup B_3$.

(c) The set of punctures enclosed by $\xi_s^1$ coincides with $B_2 \cup B_3$.

(d) The set of punctures enclosed by $\xi_s^2$ coincides with $B_3 \cup B_1$.

(e) The set of punctures enclosed by $\xi_s^3$ coincides with $B_1 \cup B_2$.

\begin{lemma} \label{cyc}
	Let $\{\T_1, \T_2, \T_3\} \subseteq \mathbf{T}_{g}$ be a cyclic triple of trees. Then
	\begin{equation} \label{cyc1}
	\hat{\A}_{\T_{1}} + \hat{\A}_{\T_{2}} + \hat{\A}_{\T_{3}} = 0.
	\end{equation}
\end{lemma}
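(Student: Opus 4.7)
The strategy is to pair both sides of (\ref{cyc1}) with the additive basis $\{W_k \;|\; k \in \mathbf{K}_g\}$ of $\H^{g-2}(\PB_{g-1}, \Z)$ provided by Corollary \ref{corAr}. Since $\H_{g-2}(\PB_{g-1}, \Z)$ is free abelian with dual basis $\{D_k\}$, the relation (\ref{cyc1}) is equivalent, via Lemma \ref{pair}, to the identity
$$
\det(X_{k, \T_1}) + \det(X_{k, \T_2}) + \det(X_{k, \T_3}) = 0 \qquad \text{for every } k \in \mathbf{K}_g.
$$
By condition (a) of the cyclic triple, the three matrices $X_{k, \T_l}$ coincide in every column except the $s^{\rm th}$, so multilinearity of $\det$ in that column reduces the problem to showing $\det(X') = 0$, where $X'$ is obtained from $X_{k, \T_1}$ by replacing its $s^{\rm th}$ column with the sum of the three $s^{\rm th}$ columns of $X_{k, \T_1}, X_{k, \T_2}, X_{k, \T_3}$.

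The heart of the argument is to exhibit column $s$ of $X'$ as an integer combination of its other columns. For each $j \in \{1, 2, 3\}$ with $|B_j| \geq 2$, there is a unique curve $\xi_{j^*} \in \{\xi_1, \dots, \xi_{g-2}\} \setminus \{\xi_s\}$ enclosing exactly $B_j$, namely the curve associated with the edge joining the $B_j$-subtree to the inner part of the S-multicurve's dual tree. I claim the row-wise identity
$$
(X')_{i, s} \;=\; (X_{k, \T_1})_{i, t} + \sum_{\substack{1 \le j \le 3 \\ |B_j| \geq 2}} (X_{k, \T_1})_{i, j^*} \qquad (1 \leq i \leq g-2).
$$
Once this is verified, column $s$ of $X'$ is a $\Z$-linear combination of columns $t$ and $j^*$ of $X'$, so $\det(X') = 0$, and the lemma follows.

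To prove the column identity, I would run a case analysis on the position of the pair $(q_{k_i}, q_{i+1})$ relative to $B_1, B_2, B_3$. Using conditions (c)--(e), the entry $(X')_{i, s}$ equals the number of $l \in \{1, 2, 3\}$ for which $\xi_s^l$ encloses both punctures: this count is $0$ when at least one of the punctures lies outside $\xi_t$, is $1$ when they lie in distinct $B_a, B_b$, and is $2$ when they lie in a common $B_a$ (the subcase $|B_a| = 1$ is automatically excluded because $k_i \leq i < i + 1$ forces $q_{k_i} \neq q_{i+1}$). The right side matches these values: $(X_{k, \T_1})_{i, t}$ contributes $1$ exactly when both punctures lie inside $\xi_t$, while at most one of the $\xi_{j^*}$-columns can contribute a further $1$, namely when both punctures sit in the same $B_{j^*}$. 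The main obstacle is precisely this identification of the curves $\xi_t$ and $\xi_{j^*}$ inside the S-multicurve from the abstract conditions (b)--(e), together with the case-by-case verification above; the remaining bookkeeping is mechanical.
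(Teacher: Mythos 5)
Your proposal is correct, and it follows the paper's overall strategy: pair with the Arnold basis $\{W_k\}$, use Lemma \ref{pair} to translate (\ref{cyc1}) into $\det(X_{k,\T_1})+\det(X_{k,\T_2})+\det(X_{k,\T_3})=0$ for all $k\in\mathbf{K}_g$, and use multilinearity of the determinant in the $s^{\rm th}$ column to reduce to the singularity of a single matrix. Where you diverge is at the singularity step, and here your version is actually \emph{more} careful than the paper's. The paper claims that the summed column satisfies $Y_{i,s}=2$ whenever the pair lies in $B_1\cup B_2\cup B_3$ and $0$ otherwise, and concludes that columns $s$ and $t$ are proportional. That claim is false for ``split'' rows: if $q_{k_i}\in B_a$ and $q_{i+1}\in B_b$ with $a\neq b$, then exactly one of the three curves $\xi_s^1,\xi_s^2,\xi_s^3$ encloses both punctures, so $Y_{i,s}=1$ while $Y_{i,t}=1$; if the same matrix also has a same-block row (where $Y_{i,s}=2$), the two columns are genuinely not proportional, and such $k$ exist for every cyclic triple (pick $q_a\in B_1$, $q_b\in B_2$, $a<b$, and set $k_{b-1}=a$). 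Your row-wise identity
\begin{equation*}
(X')_{i,s}=(X_{k,\T_1})_{i,t}+\sum_{\substack{1\le j\le 3\\ |B_j|\ge 2}}(X_{k,\T_1})_{i,j^*}
\end{equation*}
repairs this: the case analysis you sketch (count $0$ outside $\xi_t$, count $1$ for split pairs, count $2$ for same-block pairs, with $|B_a|=1$ excluded since $q_{k_i}\neq q_{i+1}$) is complete and checks out, and the curves $\xi_{j^*}$ do exist in the multicurve, since a block $B_j$ with $|B_j|\ge 2$ attaches to the modified region of the dual tree along an internal edge, whose associated curve encloses exactly $B_j$ (and is distinct from $\xi_s$ and $\xi_t$ because enclosed puncture sets determine components of a multicurve). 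So column $s$ of $X'$ is a $\Z$-combination of at most four other columns of $X'$, $\det(X')=0$, and the lemma follows. In short: same reduction as the paper, but your linear-dependence argument covers the mixed case that the paper's proportional-columns assertion silently omits; the paper's conclusion is of course still true, and your proof is the one that establishes it in full generality.
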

\begin{proof}
	It is suffices to prove that
	\begin{equation} \label{eq1}
	\langle W_k, \hat{\A}_{\T_{1}} + \hat{\A}_{\T_{2}} + \hat{\A}_{\T_{3}} \rangle = 0
	\end{equation}
	for all $k \in \mathbf{K}_g$. The formula (\ref{eq1}) is equivalent to
	\begin{equation} \label{eq2}
	\det(X_{k, \T_1}) + \det(X_{k, \T_2}) + \det(X_{k, \T_3}) = 0.
	\end{equation}
	We can assume that the conditions (a)-(e) hold. The matrices $X_{k, \T_1}, X_{k, \T_2}, X_{k, \T_3}$ coincide everywhere besides the $s^{\rm th}$ column. Therefore the left hand side of (\ref{eq2}) equals to the determinant of the matrix $Y$ defined as follows. The $s^{\rm th}$ column of $Y$ equal to the $s^{\rm th}$ column of the matrix $X_{k, \T_1} + X_{k, \T_2} + X_{k, \T_3}$ and all other columns equal to the correspondent columns of $X_{k, \T_1}$ (or, equivalently, $X_{k, \T_2}$ or $X_{k, \T_3}$). By the condition (3) we have
	$$(X_{k, \T_{1}})_{i, s}
	= \begin{cases}
	1 \mbox{, if } i \in B_{2} \cup B_{3}, \\
	0 \mbox{, otherwise,}
	\end{cases}$$
	$$(X_{k, \T_{2}})_{i, s}
	= \begin{cases}
	1 \mbox{, if } i \in B_{3} \cup B_{1}, \\
	0 \mbox{, otherwise,}
	\end{cases}$$
	$$(X_{k, \T_{3}})_{i, s}
	= \begin{cases}
	1 \mbox{, if } i \in B_{1} \cup B_{2}, \\
	0 \mbox{, otherwise.}
	\end{cases}$$
	Therefore,
	$$Y_{i, s} = (X_{k, \T_{1}} + X_{k, \T_{2}} + X_{k, \T_{3}})_{i, s} = \begin{cases}
	2 \mbox{, if } i \in B_{1} \cup B_{2} \cup B_{3}, \\
	0 \mbox{, otherwise.}
	\end{cases}$$
	By the condition (2) we have
	$$Y_{i, t} = (X_{k, \T_{1}})_{i, t} = 
	\begin{cases}
	1 \mbox{, if } i \in B_{1} \cup B_{2} \cup B_{3}, \\
	0 \mbox{, otherwise.}
	\end{cases}$$
	Therefore, the matrix $Y$ has two proportional columns, so $\det(Y) = 0$. This implies (\ref{eq2}).
\end{proof}

\begin{lemma} \label{cyc2}
	All relations between the abelian cycles $\{\hat{\A}_\T \; | \; \T \in \mathbf{T}_g\}$ follow from relations (\ref{cyc1}). 
\end{lemma}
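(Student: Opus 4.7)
Let $F$ denote the abelian group on generators $\{[\T] \mid \T \in \mathbf{T}_{g}\}$ modulo the cyclic relations $[\T_1]+[\T_2]+[\T_3]=0$. By Lemma \ref{cyc}, the assignment $[\T]\mapsto \hat{\A}_{\T}$ descends to a well-defined homomorphism $\phi\colon F \to \H_{g-2}(\PB_{g-1},\Z)$, and the content of Lemma \ref{cyc2} is exactly that $\phi$ is injective. The first assertion of Corollary \ref{gen} shows that $\phi$ is surjective and that its target is free abelian of rank $(g-2)!$ with basis $\{\hat{\A}_{\T} \mid \T \in \mathbf{T}^{b}_{g}\}$. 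Consequently it is enough to prove that $F$ itself is generated as an abelian group by $\{[\T]\mid \T\in\mathbf{T}^{b}_{g}\}$: the cardinalities then match, and $\phi$ must be an isomorphism.

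To prove that the balanced classes span $F$, I would set up a reduction procedure. Attach to each $\T\in\mathbf{T}_{g}$ the complexity $C(\T)=(c_0(\T),c_1(\T),\dots)$, where $c_d(\T)$ is the number of unbalanced $3$-valent vertices of $\T$ at graph-distance $d$ from the root-leaf $q_g$, and order such tuples lexicographically. Clearly $C(\T)=0$ iff $\T\in\mathbf{T}^{b}_{g}$. Assume $C(\T)\neq 0$ and let $d_0$ be minimal with $c_{d_0}(\T)>0$. Pick an unbalanced vertex $v$ at depth $d_0$, and let $A\sqcup B$ be the descendant splitting at $v$ with both of its two smallest-numbered descendant leaves lying in $A$. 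Denote by $v'$ the $3$-valent vertex of the subtree $A$ at which these two smallest leaves first diverge. An auxiliary induction on the path-length $\ell=\mathrm{dist}(v,v')$, using cyclic relations applied at successive edges of the $v\to v'$ path, rewrites $[\T]$ as an integer combination of classes $[\widetilde{\T}]$ with $v'$ adjacent to $v$; a final cyclic relation at the edge $vv'$ then expresses each such $[\widetilde{\T}]$ as $-[\widetilde{\T}_1]-[\widetilde{\T}_2]$, and direct inspection shows that in both $\widetilde{\T}_i$ the vertex $v$ is balanced. Since every cyclic move in the reduction is performed on an edge lying in the subtree of $\T$ rooted at $v$, no vertex at depth $<d_0$ is altered, while the entry $c_{d_0}$ drops strictly. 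Thus $C$ decreases lexicographically after each outer iteration, and an induction on $C$ terminates in a $\Z$-combination of balanced trees, as required.

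The main obstacle is the auxiliary step: verifying that a cyclic relation applied at the edge joining $v$ with its child on the $v\to v'$ path either outright balances $v$ or reduces $\mathrm{dist}(v,v')$ by one, and that in neither case a new unbalanced vertex is produced at depth $\leq d_0$. Because a cyclic move only alters the two $3$-valent vertices incident to its edge, this verification reduces to a finite local case analysis according to which of the four external branches contain the two smallest descendants of $v$. Carrying out this case check carefully, and observing that the four external branches on either side of the edge split into "light'' (containing neither of the two minimizers) and "heavy'' subtrees in a way that is controlled by the definition of $v'$, gives the inductive step and completes the proof.
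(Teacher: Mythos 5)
Your global strategy is the same as the paper's: by Corollary \ref{gen} the balanced cycles $\{\hat{\A}_\T \mid \T\in\mathbf{T}^{b}_{g}\}$ form a basis of $\H_{g-2}(\PB_{g-1},\Z)$, so it suffices to rewrite every generator, using only the relations (\ref{cyc1}), as an integer combination of balanced classes, and your first paragraph correctly reduces Lemma \ref{cyc2} to this. The gap is in your auxiliary induction on $\ell=\mathrm{dist}(v,v')$. Label the four branches at the edge $e=(v,x_1)$ you prescribe operating on: $U$ (toward the root), $C$ (the other subtree at $v$), and $Y_1,Y_2$ (the two subtrees at $x_1$), where for $\ell\geq 2$ both of the two smallest descendant leaves of $v$ lie in $Y_1$; write $(U,Y_1\,|\,C,Y_2)$ for the tree in which $U,Y_1$ hang at the endpoint $v$ and $C,Y_2$ at the other endpoint. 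The cyclic relation at $e$ gives $[\T]=-[(U,Y_1\,|\,C,Y_2)]-[(U,Y_2\,|\,C,Y_1)]$. In the first term the heavy branch $Y_1$ is attached directly to $v$, and $\mathrm{dist}(v,v')$ indeed drops by one. But in the second term both minima still sit in $Y_1$, which is still attached to $x_1$: the vertex $v$ remains unbalanced and $\mathrm{dist}(v,v')$ is unchanged. Worse, the three trees $\T$, $(U,Y_1\,|\,C,Y_2)$, $(U,Y_2\,|\,C,Y_1)$ constitute exactly one cyclic triple, so applying your prescribed move to the recalcitrant term reproduces the same relation and the rewriting cycles among these three trees instead of terminating. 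Hence the asserted dichotomy (``either outright balances $v$ or reduces $\mathrm{dist}(v,v')$ by one'') is false for one of the two branches of every relation, and no local case analysis at that particular edge can repair it.

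The step can be fixed in two ways. One is to apply the relation at the \emph{deepest} edge of the shared path, joining $v'$ to its parent $p$: there the two minima lie in the two different subtrees $Z_1,Z_2$ at $v'$, and in \emph{both} other trees of the triple the minima already diverge at $p$, so $\ell$ drops by one in both terms; your outer lexicographic bookkeeping then goes through, since the only vertices whose balance status can change lie at depth $>d_0$. The paper instead avoids the inner induction entirely by choosing $v$ to be a \emph{deepest} unbalanced vertex: then the child $v_1$ through which both minima pass is balanced, and since the two minima of $v$ are also the two minima of $v_1$, they must diverge at $v_1$, i.e.\ $\ell=1$ automatically, and the single relation at $(v,v_1)$ balances $v$ in both remaining trees. (Even with the paper's choice some care about termination remains: the move can render $v_1$ unbalanced one level deeper, so the honest decreasing quantity is, e.g., the multiset of subtree sizes of unbalanced vertices rather than the count at a fixed height; your shallowest-first scheme with the bottom-edge move becomes an equally valid, if longer, route once corrected.)
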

\begin{proof}
	Consider the abelian cycle $\hat{\A}_{\T}$ for some $\T \in \mathbf{T}_g$. Corollary \ref{gen} implies that it suffices to decompose $\hat{\A}_\T$ into a linear combination of abelian cycles $\{\hat{\A}_{\T} \; | \; \T \in \mathbf{T}_{g}^b\}$ using relations (\ref{cyc1}). 
	
	Recall that a vertex of $\T$ of degree $3$ is called balanced if the paths from it to the descendant leaves with the two smallest numbers have no common edges.
	If all vertices of $\T$ are balanced it is nothing to prove. Otherwise take any nonbalanced vertex $v$ with the largest height (distance to the root) $h(v)$. Let $v_1$ and $v_2$ be its closest descendant and let $w$ be its closest ancestor. Without loss of generality we may assume that the paths from $v$ to the two descendant leaves with the smallest numbers start with the edge $(v, v_1)$. Let $u_1$ and $u_2$ be the closest descendants of $v_1$.
	\begin{figure}[h]
		\scalebox{1}{
			\begin{tikzpicture}
			\coordinate (w) at (2, 3);
			\coordinate (v) at (2, 2);
			\coordinate (v1) at (1, 1.5);
			\coordinate (v2) at (3,1);
			\coordinate (u1) at (0.5,0);
			\coordinate (u2) at (2,0);
			\fill[black]  (w) circle [radius=3pt];
			\fill[black]  (v) circle [radius=3pt];
			\fill[black]  (v1) circle [radius=3pt];
			\fill[black]  (v2) circle [radius=3pt];
			\fill[black]  (u1) circle [radius=3pt];
			\fill[black]  (u2) circle [radius=3pt];
			\draw{(w) -- (v)};
			\draw{(v) -- (v1)};
			\draw{(v) -- (v2)};
			\draw{(v1) -- (u1)};
			\draw{(v1) -- (u2)};
			
			\node[scale = 1][above] at (w) {$w$};
			\node[scale = 1][right] at (v) {$v$};
			\node[scale = 1][left] at (v1) {$v_1$};
			\node[scale = 1][right] at (v2) {$v_2$};
			\node[scale = 1][below] at (u1) {$u_1$};
			\node[scale = 1][below] at (u2) {$u_2$};
			
			\coordinate (w) at (6, 3);
			\coordinate (v) at (6, 2);
			\coordinate (v1) at (5, 1.5);
			\coordinate (v2) at (7,1);
			\coordinate (u1) at (4.5,0);
			\coordinate (u2) at (6,0);
			\fill[black]  (w) circle [radius=3pt];
			\fill[black]  (v) circle [radius=3pt];
			\fill[black]  (v1) circle [radius=3pt];
			\fill[black]  (v2) circle [radius=3pt];
			\fill[black]  (u1) circle [radius=3pt];
			\fill[black]  (u2) circle [radius=3pt];
			\draw{(w) -- (v)};
			\draw{(v) -- (v1)};
			\draw{(v1) -- (v2)};
			\draw{(v1) -- (u1)};
			\draw{(v) -- (u2)};
			
			\node[scale = 1][above] at (w) {$w$};
			\node[scale = 1][right] at (v) {$v$};
			\node[scale = 1][left] at (v1) {$v_1$};
			\node[scale = 1][right] at (v2) {$v_2$};
			\node[scale = 1][below] at (u1) {$u_1$};
			\node[scale = 1][below] at (u2) {$u_2$};
			
			\coordinate (w) at (10, 3);
			\coordinate (v) at (10, 2);
			\coordinate (v1) at (9, 1.5);
			\coordinate (v2) at (11,1);
			\coordinate (u1) at (8.5,0);
			\coordinate (u2) at (10,0);
			\fill[black]  (w) circle [radius=3pt];
			\fill[black]  (v) circle [radius=3pt];
			\fill[black]  (v1) circle [radius=3pt];
			\fill[black]  (v2) circle [radius=3pt];
			\fill[black]  (u1) circle [radius=3pt];
			\fill[black]  (u2) circle [radius=3pt];
			\draw{(w) -- (v)};
			\draw{(v) -- (v1)};
			\draw{(v) -- (u1)};
			\draw{(v1) -- (v2)};
			\draw{(v1) -- (u2)};
			
			\node[scale = 1][above] at (w) {$w$};
			\node[scale = 1][right] at (v) {$v$};
			\node[scale = 1][left] at (v1) {$v_1$};
			\node[scale = 1][right] at (v2) {$v_2$};
			\node[scale = 1][below] at (u1) {$u_1$};
			\node[scale = 1][below] at (u2) {$u_2$};

				\end{tikzpicture}}
		\caption{The trees $\T$, $\T'$ and $\T''$.}
	\label{cyclic2}
\end{figure}

Consider the trees $\T'$ and $\T''$ that differ from $\T$ as shown in Fig. \ref{cyclic2}. The triple $\{\T, \T', \T''\}$ is cyclic, so $\hat{\A}_{\T} = - \hat{\A}_{\T'} - \hat{\A}_{\T''}$. Note that the vertex $v_1$ is balanced in $\T$, therefore the vertex $v$ is balanced in $\T'$ and $\T''$. Consequently, $\T'$ and $\T''$ have less number of nonbalanced vertices of height $h(v)$ and no nonbalanced vertices of greater height. Repeating this operation we decompose $\hat{\A}_\T$ into a linear combination of abelian cycles $\{\hat{\A}_{\T} \; | \; \T \in \mathbf{T}_{g}^b\}$ using relations (\ref{cyc1}). This completes the proof.
\end{proof}

\begin{proof}[Proof of Proposition \ref{th1}] By Proposition \ref{th2} and the first part of Corollary \ref{gen} there is an isomorphism
$$\P_g \cong \H_{2g-3}(\Z^{g-1} \times \PB_{g-1}, \Z) \cong \H_{g-2}(\PB_{g-1}, \Z)$$
which maps $\A_{\T}$ to $\hat{\A}_\T$ for all $\T \in \mathbf{T}_{g}$. The abelian cycles $\{\hat{\A}_\T \; | \; \T \in \mathbf{T}_g\}$ generate the group $\H_{g-2}(\PB_{g-1}, \Z)$, therefore the required assertion follows from Lemmas \ref{cyc} and \ref{cyc2}.
\end{proof}

\section{Complex of cycles and the spectral sequence}

Consider the commutative diagram
\begin{equation*} \label{sub}
\scalebox{1}{
	\xymatrix{	
		1 \ar[r] & \wedge^{3} \H / \H \ar[r] & \mathcal{G}_g \ar[r]^(0.4)p &  \Sp(2g, \Z) \ar[r]   & 1\\
		&& \mathcal{H}_g = \SL(2, \Z)^{\times g} \rtimes S_{g} \ar@{^{(}->}[u] \ar@{^{(}->}[ru] &&
}}
\end{equation*}

Let us choose elements $h_{1} = 1, h_{2}, h_{3},  \dots \in \mathcal{G}_g$ such that $1= p(h_1), p(h_2),  p(h_3), \dots \in \Sp(2g, \Z)$ are representatives of all left cosets $\Sp(2g, \Z) / \mathcal{H}_g$. Let $\hat{h}_1, \hat{h}_2, \hat{h}_3 \dots \in \Mod(\S_{g})$ be mapping classes that go to $h_1, h_2, h_3, \dots$ under the natural surjective homomorphism $\Mod(\S_{g}) \twoheadrightarrow \mathcal{G}_g$.

It is convenient to denote by $\U_g$ the abelian group $\Lambda^3 \H / \H$ with multiplicative notation. For each $u \in \U_g$ let $\hat{u} \in \I_g$ be mapping class that go to $u$ under the Johnson homomorphism $\tau: \I_g \to \U_g$.
Let $f_{1} = 1, f_{2}, f_{3},  \dots \in \mathcal{G}_g$ be representatives of all left cosets $\mathcal{G}_g / \mathcal{H}_g$.
Let $\hat{f}_1, \hat{f}_2, \hat{f}_3, \dots \in \Mod(\S_{g})$ be mapping classes that go to $f_1, f_2, f_3, \dots$ under the homomorphism $\Mod(\S_{g}) \twoheadrightarrow \mathcal{G}_g$. 
For any $s \in \mathbb{N}$ the element $f_s$ can be uniquely decomposed as $f_s = u \cdot h_r$ for some $u \in \U_g$ and $r \in \mathbb{N}$. We can choose $\hat{f}_s$ such that $\hat{f}_s = \hat{u} \cdot \hat{h}_r$.

For each $r\in \mathbb{N}$ let us denote by $G_r$ the subgroup of $\H_{2g-3}(\K_g, \Z)$ generated by the images of homomorphisms
\begin{equation} \label{eq5}
\H_{2g-3}(\Stab_{\K_g}(\hat{u} \cdot \hat{h}_r \cdot N), \Z) \rightarrow \H_{2g-3}(\K_g, \Z), \; \; \; u \in \U_g.
\end{equation}

In this section we prove the following result.
\begin{lemma} \label{lemma}
	The inclusions
	\begin{equation*}\label{inc}
	G_r \hookrightarrow \H_{2g-3}(\K_g, \Z), \;\;\; r \in \mathbb{N}
	\end{equation*}
	induce an injective homomorphism
	\begin{equation*} \label{eq3}
	\bigoplus_{r \in \mathbb{N}} G_r \hookrightarrow \H_{2g-3}(\K_g, \Z).
	\end{equation*}
\end{lemma}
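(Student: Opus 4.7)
The plan is to realise each subgroup $G_r$ as a direct summand of the bottom filtration piece of $\H_{2g-3}(\K_g, \Z)$ obtained from an equivariant homology spectral sequence, and then to verify that the summands indexed by different $r$'s remain genuinely independent after passing to $E^\infty$. The natural source of such a spectral sequence is the action of $\K_g$ on a suitable contractible $\K_g$-CW complex $\X$, for which I would take the Bestvina--Bux--Margalit complex of cycles from \cite{Bestvina}, possibly refined by the auxiliary complexes promised for Section~5. The key design goals for $\X$ are: (a) the $\K_g$-orbits of vertices of $\X$ are in bijection with $\mathcal{G}/\mathcal{H}$, and the stabilizer of the vertex indexed by $u \cdot h_r$ is $\Stab_{\K_g}(\hat{u}\hat{h}_r \cdot N)$; and (b) every positive-dimensional cell of $\X$ joins vertices whose $\mathcal{G}/\mathcal{H}$-labels project to a single coset in $\Sp(2g, \Z)/\mathcal{H}$.

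Granted such a complex $\X$, I set up the equivariant homology spectral sequence
\begin{equation*}
E^1_{p, q} = \bigoplus_{\sigma \in \K_g \backslash \X^{(p)}} \H_q(\Stab_{\K_g}(\sigma), \Z_\sigma) \Longrightarrow \H_{p+q}(\K_g, \Z).
\end{equation*}
Property~(a) yields a canonical direct-sum decomposition of $E^1_{0, 2g-3}$ indexed by $\mathcal{G}/\mathcal{H}$. Grouping its summands along the fibres of the projection $\mathcal{G}/\mathcal{H} \twoheadrightarrow \Sp(2g, \Z)/\mathcal{H}$ produces a coarser direct-sum decomposition $E^1_{0, 2g-3} = \bigoplus_{r \in \mathbb{N}} E^1_{0, 2g-3}(r)$, in which $G_r$ is exactly the image of the $r$-th summand under the composition $E^1_{0, 2g-3} \twoheadrightarrow E^\infty_{0, 2g-3} = F_0 \H_{2g-3}(\K_g, \Z) \hookrightarrow \H_{2g-3}(\K_g, \Z)$.

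The second step is to use (b) to propagate the $r$-grading to the $E^\infty$-page. The $d^1$-differential $E^1_{1, 2g-3} \to E^1_{0, 2g-3}$ is assembled from the cellular incidence structure of $\X$, and by (b) its restriction to each summand of $E^1_{1, 2g-3}$ lands inside a single summand $E^1_{0, 2g-3}(r)$; the same mechanism controls all higher differentials entering the bottom row. Therefore $E^\infty_{0, 2g-3}$ inherits a direct-sum decomposition $\bigoplus_r E^\infty_{0, 2g-3}(r)$, and consequently $\bigoplus_r G_r$ is a direct summand of $\H_{2g-3}(\K_g, \Z)$ with the stated direct-sum structure.

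\textbf{The main obstacle} will be constructing $\X$ and verifying property~(b): the compatibility condition discards the $\U_g = \wedge^3 \H / \H$-information contained in $\mathcal{G}/\mathcal{H}$, so one must rule out any cell incidence connecting vertices labelled by different $\Sp$-cosets. A natural strategy, and the one I expect to carry out, is to restrict attention to a subcomplex of the complex of cycles whose positive-dimensional cells encode only simultaneous surgeries or collapses of separating multicurves whose underlying unordered symplectic splitting $\V$ stays in a single $\Sp$-orbit. Gaifullin's Theorem~1.3, which yields the analogous injectivity for the single abelian cycle $\A_{\T_0}$, provides both motivation and a template: if the construction and the compatibility check mirror Gaifullin's argument but applied uniformly to all of $\H_{2g-3}(\Stab_{\K_g}(N), \Z)$, the lemma follows.
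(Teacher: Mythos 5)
There is a genuine gap: your entire argument is contingent on a $\K_g$-complex $\X$ satisfying properties (a) and (b), and you do not construct it. You flag this as the main obstacle, but that obstacle is the actual content of the lemma, and property (a) is not achievable within the complex of cycles. The vertices of $\B_g(x)$ are supports of basic $1$-cycles, i.e.\ oriented multicurves of \emph{nonseparating} curves with linearly independent homology classes; the separating multicurves $\hat{u}\cdot\hat{h}_r\cdot N$ are never vertices, so the groups $\Stab_{\K_g}(\hat{u}\hat{h}_r\cdot N)$ never appear as vertex stabilizers. Worse, the $\K_g$-orbit of a multicurve cannot detect the $\U_g$-coordinate of $\mathcal{G}/\mathcal{H}$: for fixed $r$, the natural multicurves associated with different $u\in\U_g$ can lie in a single $\K_g$-orbit (this is exactly the difficulty confronted in the proof of Lemma \ref{4lemma}, where after adjusting lifts one has $B_1=\dots=B_k$), so no subcomplex of $\B_g(x)$ can have vertex-orbit set $\mathcal{G}/\mathcal{H}$ with your stabilizers. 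This is precisely why the paper splits the problem in two: the present lemma separates only the $\Sp(2g,\Z)/\mathcal{H}$-direction, while the $\U_g$-direction requires a genuinely new complex — the complex of relative cycles $\B_{0,2g}$ built from arcs on the punctured sphere, whose orbits detect $u$ via the Johnson homomorphism — and that belongs to Lemma \ref{4lemma}, not here.

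Your spectral-sequence mechanics also miss the real engine and substitute an unnecessary one. The paper never needs your incidence property (b): by the Bestvina--Bux--Margalit bound $\dim(\sigma)+\cd(\Stab_{\K_g}(\sigma))\leq 2g-3$ one gets $E^1_{p,q}=0$ for $p+q>2g-3$, so \emph{no} differential can reach the corner $E^1_{0,2g-3}$, which therefore survives intact to $E^\infty$; this gives Proposition \ref{prop1} (injectivity of the sum over vertices in pairwise distinct $\K_g$-orbits) with no compatibility condition on cells. You never invoke this dimension bound, and without it your corner argument does not close. What the lemma actually requires, and what your proposal lacks, are two concrete steps. First, a transfer step: since $\hat{u}\hat{h}_r\cdot N$ is not a vertex, the paper uses Proposition \ref{th1} (the top homology of the stabilizer is generated by the explicit abelian cycles $\hat{u}\hat{h}_r\cdot\A_{\T}$) to factor each map $\H_{2g-3}(\Stab_{\K_g}(\hat{u}\hat{h}_r\cdot N),\Z)\to\H_{2g-3}(\K_g,\Z)$ through $\H_{2g-3}(\Stab_{\K_g}(A_r),\Z)$ for a nonseparating multicurve $A_r\in\M_0(x)$ supported in the tori of the splitting $\V_r$. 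Second, a choice step: given a putative finite relation, one chooses $x$ by Proposition \ref{prop_dec} so that the $\V_r$-components of $x$ yield pairwise distinct unordered sets of primitive classes; since $\K_g$ acts trivially on $\H$, the $A_r$ then lie in pairwise distinct $\K_g$-orbits, and Proposition \ref{prop1} for the single complex $\B_g(x)$ kills the relation. Your closing hope of ``mirroring Gaifullin's argument'' is exactly these two steps, neither of which appears in your proposal.
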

In our proof we follow ideas of \cite{Gaifullin_J}.
\subsection{Complex of cycles} 

Bestvina, Bux, and Margalit \cite{Bestvina} constructed a contractible $CW$-complex $\B_g$ called the \textit{complex of cycles} on which the Johnson kernel acts without rotations. ''Without rotations'' means that if an element  $h \in \K_g$ stabilizes a cell $\sigma$ setwise, then $h$ stabilizes $\sigma$ pointwise. Let us recall the construction of $\B_g$. More details can be found in \cite{Bestvina, Hatcher, Gaifullin_T, Gaifullin_T3}.

Let us denote by $\C$ the set of all isotopy classes of oriented non-separating simple closed curves on $\S_{g}$. Fix any nonzero element $x \in \H$. The construction of $\B_g = \B_g(x)$ depends on the choice of the homology class $x$, however the $CW$-complexes $\B_g(x)$ are pairwise homeomorphic for different $x$. 

A \textit{basic 1-cycle} for a homology class $x$ is a formal linear combination $\gamma = \sum_{i=1}^n k_i  \gamma_i$, where $\gamma_i \in \C$ and $k_i \in \mathbb{N}$, satisfying the following properties:

(1) the homology classes $[\gamma_1], \dots, [\gamma_n]$ are linearly independent,

(2) $\sum_{i=1}^n k_i [\gamma_i] = x$,

(3) the isotopy classes $\gamma_1, \dots, \gamma_g$ contain pairwise disjoint representatives.

The oriented multicurve $\gamma_{1} \cup \dots \cup \gamma_{g}$ is called the \textit{support} of $\gamma$. 

Let us denote by $\M(x)$ the set of oriented multicurves $M = \gamma_{1} \cup \dots \cup \gamma_s$ satisfying the following properties:

(i) no nontrivial linear combination of the homology classes $[\gamma_1], \dots, [\gamma_s]$ with nonnegative coefficients equals zero,

(ii) for each $1 \leq i \leq s$ there exists a basic 1-cycle for $x$ whose support is contained in $M$ and contains $\gamma_i$.

For each $M \in \M(x)$ let us denote by $P_M \subset \mathbb{R}_{\geq 0}^{\C}$ the convex hull of the basic 1-cycles supported in $M$.  We have that $P_M$ is a convex polytope. By definition the complex of cycles is the regular $CW$-complex given by $\B_g(x) = \cup_{M \in \M(x)} P_M$. Denote by $\M_0(x) \subseteq \M(x)$ the set of supports of basic 1-cycles for $x$. Then $\{P_M \; | \; M \in \M_0(x)\}$ is the set of 0-cells of $\B_g(x)$.  

\begin{theorem} \cite[Theorem E]{Bestvina}\label{contrcyc}
	Let $g \geq 1$ and $0 \neq x \in \H_{1}(\S_g, \Z)$. Then $\B_g(x)$ is contractible.
\end{theorem}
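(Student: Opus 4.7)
The plan is to prove contractibility via a combinatorial Morse-theoretic argument: I will set up a complexity function on $\B_g(x)$ and use oriented surgery moves to flow every cell down to a contractible subcomplex.

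Fix once and for all a basic $1$-cycle $\gamma_0 = \sum_b k_b \gamma_{0, b}$ for $x$, realized by a disjoint family of curves. For any basic $1$-cycle $\gamma = \sum_a \lambda_a \gamma_a$, define its complexity by $c(\gamma) = \sum_{a, b} \lambda_a k_b \cdot i(\gamma_a, \gamma_{0, b})$, where $i(\cdot, \cdot)$ denotes geometric intersection number on $\S_g$ computed in minimal position. Extend $c$ to all of $\B_g(x)$ by taking the maximum of $c$ over the vertices of each cell. Then $c^{-1}(0)$ consists precisely of those cells $P_M$ whose underlying multicurve can be realized disjointly from the support of $\gamma_0$, and this subcomplex is exactly the star of the $0$-cell $\gamma_0$ in $\B_g(x)$.

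The key step is a surgery lemma: for any basic $1$-cycle $\gamma$ with $c(\gamma) > 0$, one can pick an intersection point between some curve $\gamma_a$ in the support of $\gamma$ and some curve $\gamma_{0, b}$ in the support of $\gamma_0$, and perform an oriented cut-and-paste there to obtain a new basic $1$-cycle $\gamma'$ for $x$ satisfying $c(\gamma') < c(\gamma)$, with the union of the supports of $\gamma$ and $\gamma'$ belonging to $\M(x)$. The last property means that $\gamma$ and $\gamma'$ lie on a common cell of $\B_g(x)$, so they are joined by a straight-line segment within that cell. Orienting the surgery in accordance with the orientations of $\gamma_a$ and $\gamma_{0, b}$ guarantees that $\gamma'$ is an honest basic $1$-cycle and that the intersection count strictly drops.

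Using this surgery lemma I would build a discrete Morse matching on $\B_g(x)$: to each cell $\sigma$ of positive cellular complexity, assign the face spanned by $\sigma$ together with the surgered vertex obtained at a canonically chosen intersection point attached to a maximal-complexity vertex of $\sigma$. The strict decrease of $c$ under surgery prevents cycles in the matching, so by standard discrete Morse theory the unmatched cells form a deformation retract of $\B_g(x)$. These unmatched cells are precisely the cells of complexity $0$, i.e.\ the star of $\gamma_0$, which is star-shaped around the $0$-cell $\gamma_0$ and therefore contractible.

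The main obstacle will be establishing the \emph{coherence} required for the matching to be well-defined: within a single cell $P_M$ one must be able to select one intersection point of the support of $M$ with the support of $\gamma_0$ whose surgery simultaneously lowers the complexity at every vertex of $P_M$ where $c$ attains the cellular maximum, and the resulting surgered multicurve must still satisfy conditions (i) and (ii) defining $\M(x)$. Verifying these compatibilities, so that surgery assembles into a genuine cellular map rather than merely a vertex-by-vertex operation, requires a careful minimal-position analysis of how the polytope structure of $P_M$ interacts with the resolution of intersections; this is the technical heart of the argument and is precisely the content that Bestvina, Bux, and Margalit carry out in full detail.
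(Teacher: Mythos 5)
Your overall strategy (a complexity function plus complexity-decreasing surgeries, organized by discrete Morse theory) is genuinely different from the actual argument, but it contains a fatal flaw at its core: the surgery lemma as you state it cannot work. An oriented cut-and-paste at an intersection point of a support curve $\gamma_a$ of $\gamma$ with a curve $\gamma_{0,b}$ of the reference cycle replaces the pair by a curve in the class $[\gamma_a]+[\gamma_{0,b}]$; since $\gamma_{0,b}$ carries coefficient \emph{zero} in $\gamma$, there is no way to reassemble the result into a cycle representing $x$ with nonnegative coefficients without effectively subtracting $\gamma_{0,b}$, which breaks positivity. This is exactly why Bestvina--Bux--Margalit (whose proof the present paper mirrors for the relative complex $\B_{0,2g}$ in Theorem \ref{contr}) do not surger a cycle against a weight-zero target: they surger the \emph{convex combination} $tc+(1-t)d$, in which both cycles carry positive real weights, by thickening every curve to a rectangle of width equal to its weight, assembling a circle-valued function $\phi$, and taking preimages of regular values. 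That produces a global, simultaneous resolution $\Surger(tc+(1-t)d)$, and together with the projection $\Drain$ it yields an explicit deformation retraction $d \mapsto \Drain(\Surger(tc+(1-t)d))$ to the point $c$ --- no Morse matching is needed, and your closing claim that the coherence of a matching is ``precisely the content'' of their paper is not accurate.

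Even setting the surgery lemma aside, the Morse-theoretic scaffolding has two further gaps. First, pairing a cell $\sigma = P_M$ with the cell spanned by $M$ together with the support of a surgered vertex is a pairing with a \emph{coface} whose dimension can exceed $\dim\sigma + 1$ (adding the $g$ curves of a new basic cycle can raise the polytope dimension by more than one), so this is not a valid Morse matching; acyclicity and properness on the infinite complex $\B_g(x)$ are also never addressed. Second, $c^{-1}(0)$ need not coincide with the star of $\gamma_0$: a multicurve $M$ realized disjointly from the support of $\gamma_0$ may contain a curve homologous to $-[\gamma_{0,b}]$, in which case $M \cup \mathrm{supp}(\gamma_0)$ violates condition (i) in the definition of $\M(x)$ (a nontrivial nonnegative combination of classes vanishes), so $P_M$ and the vertex $\gamma_0$ span no common cell and $P_M$ lies outside the star even though its complexity is zero. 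Your unmatched subcomplex would then be strictly larger than the star, and its contractibility is not established by the star-shapedness argument.
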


\subsection{The spectral sequence}

Suppose that a group $G$ acts cellularly and without rotations on a contractible $CW$-complex $X$. 
Let $C_*(X, \Z)$ be the cellular chain complex of $X$ and $\R_*$ be a projective resolution of $\Z$ over $\Z G$. Consider the double complex  $B_{p, q} = C_p(X, \Z) \otimes_G \R_q$ with the filtration by columns. The corresponding spectral sequence (see (7.7) in \cite[Section VII.7]{Brown}) has the form
\begin{equation} \label{spec_sec}
E_{p, q}^1 \cong \bigoplus_{\sigma \in \X_p}\H_q(\Stab_G (\sigma), \Z) \Rightarrow \H_{p+q}(G, \Z),
\end{equation}
where $\X_p$ is a set containing exactly one representative in each $G$-orbit of $p$-cells of $X$. Let us remark that for an arbitrary $CW$-complex $X$ the spectral sequence (\ref{spec_sec}) converges to the equivariant homology $\H^G_{p+q}(X, \Z)$. So for a contractable $CW$-complex $X$ it converges to $\H^G_{p+q}(X, \Z) \cong \H_{p+q}(G, \Z)$.

Now let $E_{*,*}^*$ be the spectral sequence (\ref{spec_sec}) for the action of $\K_g$ on $\B_g(x)$ for some $0 \neq x \in \H_{1}(\S_g, \Z)$. The fact that $\K_g$ acts on $\B_g$ without rotations follows from a result of Ivanov \cite[Theorem 1.2]{Ivanov}: if an element $h \in \I_g$ stabilises a multicurve $M$ then $h$ stabilises each component of $M$. Bestvina, Bux and Margalit proved \cite[Proposition 6.2]{Bestvina} that for each cell $\sigma \in \B_g(x)$ we have
\begin{equation*} \label{ineq}
\dim(\sigma) + \cd(\Stab_{\K_g}(\sigma)) \leq 2g-3.
\end{equation*}

\begin{figure}[h]
	\scalebox{1}{
		\begin{tikzpicture}
			\path[fill=green] (0, 7) -- (1, 7) -- (1, 8) -- (0, 8) -- cycle;
			
			\draw[thick, ->] (0, 0) to (9, 0);
			\draw[thick, ->] (0, 0) to (0, 9);
			
			\draw[thick] (1, 0) to (1, 3.2);
			\draw[thick] (2, 0) to (2, 3.2);
			\draw[thick] (3, 0) to (3, 3.2);
			
			\draw[thick] (0, 1) to (3.2, 1);
			\draw[thick] (0, 2) to (3.2, 2);
			\draw[thick] (0, 3) to (3.2, 3);
			
			\draw[thick] (5, 0) to (5, 3.2);
			\draw[thick] (6, 0) to (6, 3);
			\draw[thick] (7, 0) to (7, 2);
			\draw[thick] (8, 0) to (8, 1);
			
			\draw[thick] (4.8, 3) to (6, 3);
			\draw[thick] (4.8, 2) to (7, 2);
			\draw[thick] (4.8, 1) to (8, 1);

			\draw[thick] (0, 5) to (3.2, 5);
			\draw[thick] (0, 6) to (3, 6);
			\draw[thick] (0, 7) to (2, 7);
			\draw[thick] (0, 8) to (1, 8);
			
			\draw[thick] (3, 4.8) to (3, 6);
			\draw[thick] (2, 4.8) to (2, 7);
			\draw[thick] (1, 4.8) to (1, 8);
			
			\draw[thick, dotted] (3.2, 1) to (4.8, 1);
			\draw[thick, dotted] (3.2, 2) to (4.8, 2);
			\draw[thick, dotted] (3.2, 3) to (4.8, 3);
			
			\draw[thick, dotted] (1, 3.2) to (1, 4.8);
			\draw[thick, dotted] (2, 3.2) to (2, 4.8);
			\draw[thick, dotted] (3, 3.2) to (3, 4.8);
			
			\draw[thick, dotted] (3.2, 4.8) to (4.8, 3.2);

			\node[scale = 1][below] at (8.5, 0) {$p$};
			\node[scale = 1][below] at (7.5, 0) {$2g-3$};
			\node[scale = 1][below] at (0.5, 0) {$0$};
			\node[scale = 1][below] at (1.5, 0) {$1$};
			\node[scale = 1][below] at (2.5, 0) {$2$};
			
			\node[scale = 1][left] at (0, 0.5) {$0$};
			\node[scale = 1][left] at (0, 1.5) {$1$};
			\node[scale = 1][left] at (0, 2.5) {$2$};
			\node[scale = 1][left] at (0, 8.5) {$q$};
			\node[scale = 1][left] at (0, 7.5) {$2g-3$};
			\node[scale = 1][left] at (0, 6.5) {$2g-4$};
			\node[scale = 1][left] at (0, 5.5) {$2g-5$};

			\draw[red, thick, ->] (1.5, 7.65) to (0.5, 7.65);
			\draw[red, thick, ->] (2.5, 6.5) to (0.5, 7.5);
			\draw[red, thick, ->] (3.5, 5.35) to (0.5, 7.35);
			
			\draw[red, thick, dotted] (4, 5) to (5.5, 3.5);

	\end{tikzpicture}}
	\caption{ }
	\label{spectral}
\end{figure}

This immediately implies $E^1_{p, q} = 0$ for $p+q > 2g-3$. Hence all differentials $d^1, d^2, \dots$ to the group $E^1_{0, 2g-3}$ are trivial (see Fig. \ref{spectral}, the group $E_{0, 2g-3}^1$ is shown in green), so $E^1_{0, 2g-3} = E^{\infty}_{0, 2g-3}$. Therefore we have the following result.
\begin{prop} \cite[Proposition 3.2]{Gaifullin_J} \label{prop1}
	Let $\mathfrak{M} \subseteq \mathcal{M}_{0}(x)$ be a subset consisting of oriented multicurves from pairwise different $\K_g$-orbits. Then the inclusions $\Stab_{\K_g}(M) \subseteq \K_g$, where $M \in \mathfrak{M}$, induce an injective homomorphism
	\begin{equation*} \label{c-l}
	\bigoplus_{M \in \mathfrak{M}} \H_{2g-3}(\Stab_{\K_g}(M), \Z) \hookrightarrow \H_{2g-3}(\K_g), \Z).
	\end{equation*}
\end{prop}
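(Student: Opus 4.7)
The plan is to apply the spectral sequence (\ref{spec_sec}) directly to the action of $\K_g$ on the contractible complex $\B_g(x)$. Almost every ingredient is already assembled: $\B_g(x)$ is contractible by Theorem \ref{contrcyc}; the action is without rotations (if $h \in \K_g$ stabilizes the cell $P_M$ setwise, then Ivanov's theorem implies that $h$ fixes each component of the oriented multicurve $M$, and therefore pointwise fixes every basic $1$-cycle supported on $M$, i.e.\ every vertex of $P_M$); and the Bestvina--Bux--Margalit bound gives $E^1_{p,q} = 0$ whenever $p + q > 2g-3$.

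The key observation will be that the corner $E^1_{0,\, 2g-3}$ survives unchanged to $E^\infty_{0,\, 2g-3}$. The outgoing differentials $d^r \colon E^r_{0,\, 2g-3} \to E^r_{-r,\, 2g-4+r}$ vanish for $r \geq 1$ because the targets lie in negative filtration degree. The incoming differentials $d^r \colon E^r_{r,\, 2g-2-r} \to E^r_{0,\, 2g-3}$ vanish because the sources lie in total degree $2g-2 > 2g-3$ and are therefore already zero at the $E^1$ page by the dimension bound. Hence $E^\infty_{0,\, 2g-3} = E^1_{0,\, 2g-3}$.

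Since the column filtration on $\H_*(\K_g, \Z)$ has $F_{-1} = 0$, one has $E^\infty_{0,\, 2g-3} = F_0 \H_{2g-3}(\K_g, \Z)$, which embeds into $\H_{2g-3}(\K_g, \Z)$. The $0$-cells of $\B_g(x)$ are in canonical bijection with $\M_0(x)$, and on the $E^1$ page
\begin{equation*}
E^1_{0,\, 2g-3} \cong \bigoplus_{M \in \X_0} \H_{2g-3}(\Stab_{\K_g}(M), \Z),
\end{equation*}
where $\X_0 \subseteq \M_0(x)$ contains exactly one representative from each $\K_g$-orbit of $0$-cells. Choosing $\X_0$ to contain the given $\mathfrak{M}$, the summands indexed by $\mathfrak{M}$ sit inside $E^1_{0,\, 2g-3}$ as a direct summand and thus inject into $\H_{2g-3}(\K_g, \Z)$.

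The main (and only mild) obstacle is the verification that the edge map $E^\infty_{0,\, 2g-3} \hookrightarrow \H_{2g-3}(\K_g, \Z)$ coincides on each summand with the map induced by the inclusion $\Stab_{\K_g}(M) \hookrightarrow \K_g$; this is a standard property of the equivariant spectral sequence of a group acting on a $CW$-complex without rotations and can be read off the double complex $C_*(\B_g(x), \Z) \otimes_{\K_g} \R_*$. Once this is in hand, it remains only to note that replacing $M$ by $h \cdot M$ for some $h \in \K_g$ replaces $\Stab_{\K_g}(M)$ by the conjugate subgroup $h \Stab_{\K_g}(M) h^{-1}$, and conjugation by elements of $\K_g$ acts trivially on $\H_*(\K_g, \Z)$, so the choice of representative within each orbit is immaterial.
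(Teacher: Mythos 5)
Your proposal is correct and takes essentially the same route as the paper, which deduces Proposition \ref{prop1} from the spectral sequence (\ref{spec_sec}) for the action of $\K_g$ on the contractible complex $\B_g(x)$, using Ivanov's theorem for the rotation-free action and the Bestvina--Bux--Margalit bound to get $E^1_{p,q}=0$ for $p+q>2g-3$, hence $E^1_{0,2g-3}=E^{\infty}_{0,2g-3}$. Your additional verifications (the vanishing of incoming and outgoing differentials at the corner, the identification of the edge map with the inclusion-induced maps, and the independence of the choice of orbit representatives via conjugation acting trivially on $\H_*(\K_g,\Z)$) merely make explicit what the paper leaves implicit.
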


\begin{proof}[Proof of Lemma \ref{lemma}]
Denote by $X_{r, i} \subset \S_{g}$ the one-punctured torus bounded by $\hat{h}_r\delta_{i}$ and $V_{r, i} = \H_{1}(X_{r, i}, \Z) \subset \H$. Then for each $r$ we have the symplectic splittings $\H = \oplus_i V_{r, i}$.
Denote this unordered splitting by $\V_{r} = \{V_{r, 1}, \dots, V_{r, g}\}$. Since $\mathcal{H}_g$ is the stabiliser of $\V_1$ in $\Sp(2g, \Z)$, it follows that $\V_{r}$ are pairwise distinct.

Assume the converse to the statement of Lemma \ref{lemma} and consider a nontrivial linear relation
\begin{equation} \label{relation}
\sum_{r=1}^k \lambda_r \theta_r = 0, \; \; \lambda_r \in \Z, \; \theta_r \in G_r.
\end{equation}
For any homology class $x \in \H_{1}(\S_g, \Z)$ and for any $1 \leq r \leq k$ we have a unique decomposition
\begin{equation*}\label{dec}
x = \sum_{i=1}^g x_{r, i}, \; \; x_{r, i} \in V_{r, i}.
\end{equation*}
The following result is proved in \cite{Gaifullin_J}.
\begin{prop}\cite[Lemma 4.5]{Gaifullin_J} \label{prop_dec}
	There is a homology class $x \in \H$ such that
	
	(1) all homology classes $x_{r, i}$ are nonzero, $1 \leq r \leq k$, $1 \leq i \leq g$,
	
	(2) for all $1\leq p \neq q \leq k$ we have $\{x_{p, 1}, \dots, x_{p, g}\} \neq \{x_{q, 1}, \dots, x_{q, g}\}$ as unordered sets.
\end{prop}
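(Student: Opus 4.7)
The plan is a general-position argument. I want to produce $x \in \H$ that avoids a certain finite collection of proper $\Q$-linear subspaces of $\H \otimes \Q$; since a $\Q$-vector space cannot be covered by finitely many proper subspaces and $\H$ contains a $\Q$-basis of $\H \otimes \Q$, any rational witness outside the union can be cleared of denominators to produce an integral witness.

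For each splitting $\V_r$ let $\pi_{r,i} \colon \H \otimes \Q \to V_{r,i} \otimes \Q$ be the associated projection, so that $x_{r,i} = \pi_{r,i}(x)$. The failure of condition (1) at index $(r,i)$ cuts out the kernel $\bigoplus_{j \neq i} V_{r,j} \otimes \Q$ of $\pi_{r,i}$, a $\Q$-subspace of codimension $2$. There are only $kg$ such bad subspaces to avoid.

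For condition (2), fix $p \neq q$ in $\{1, \dots, k\}$ and $\sigma \in S_g$, and consider
\[ W_{p,q,\sigma} = \{x \in \H \otimes \Q : \pi_{p,i}(x) = \pi_{q, \sigma(i)}(x) \text{ for all } i = 1, \dots, g\}, \]
which is a $\Q$-linear subspace. The key step is to show $W_{p,q,\sigma}$ is proper. If it were all of $\H \otimes \Q$, then $\pi_{p,i} = \pi_{q, \sigma(i)}$ as linear maps for every $i$; comparing images gives $V_{p,i} = V_{q, \sigma(i)}$ for all $i$. This forces $\V_p = \V_q$ as unordered collections, contradicting the fact, recalled just before the statement, that the splittings $\V_1, \dots, \V_k$ are pairwise distinct.

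Putting these together, conditions (1) and (2) together forbid $x$ from a finite union $\bigcup_{r,i} \ker \pi_{r,i} \cup \bigcup_{p \neq q, \sigma} W_{p,q,\sigma}$ of proper $\Q$-subspaces of $\H \otimes \Q$. Its complement is nonempty, and any rational element in the complement scales to the desired $x \in \H$. The only nontrivial ingredient is the properness argument for the $W_{p,q,\sigma}$; once that is in hand, the rest is the standard fact that a vector space over an infinite field is not a finite union of proper subspaces.
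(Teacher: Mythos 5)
Your argument is correct, and it is worth noting that the paper does not prove this proposition at all: it is imported verbatim from Gaifullin (cited as Lemma 4.5 there), so there is no internal proof to compare against, and your write-up serves as a complete self-contained substitute of exactly the expected general-position type. Two small points deserve to be made explicit. First, in the properness step, equality of the projections gives $V_{p,i} \otimes \Q = V_{q,\sigma(i)} \otimes \Q$, and to conclude $\V_p = \V_q$ (the pairwise distinctness recalled in the paper is a statement about the integral splittings $\{V_{r,1},\dots,V_{r,g}\}$, since $\mathcal{H}$ is the stabilizer of $\V_1$ in $\Sp(2g,\Z)$) you should add the one-line remark that each $V_{r,i}$ is a direct summand of $\H$, hence saturated, so rational equality of these sublattices forces integral equality, $V_{p,i} = (V_{p,i}\otimes\Q)\cap\H = V_{q,\sigma(i)}$. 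Second, the reduction of the failure of condition (2) to membership in some $W_{p,q,\sigma}$ uses condition (1): since $V_{r,i} \cap V_{r,j} = 0$ for $i \neq j$, nonvanishing of all components makes $x_{p,1},\dots,x_{p,g}$ pairwise distinct, so equality of the unordered sets really does produce a permutation $\sigma$ with $x_{p,i} = x_{q,\sigma(i)}$ for all $i$; your construction handles this automatically because you excise the kernels $\ker \pi_{r,i}$ from the same finite union, but the logical dependence should be stated. With these two remarks added, the argument is airtight: the relation (\ref{relation}) involves only finitely many indices $r$, so the union is finite, a $\Q$-vector space is not a finite union of proper subspaces, and clearing denominators preserves avoidance since subspaces are stable under scaling.
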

Take any $x \in \H$ satisfying the conditions of Proposition \ref{prop_dec}.
For any $1 \leq r \leq k$ and $1 \leq i \leq g$ we have $x_{r, i} = n_{r, i} a_{r, i}$ where $a_{r, i} \in \H$ is primitive and $n_{r, i} \in \mathbb{N}$. Let us check that for all $1\leq p \neq q \leq k$ we have $\{a_{p, 1}, \dots, a_{p, g}\} \neq \{a_{q, 1}, \dots, a_{q, g}\}$ as unordered sets. Indeed, assume that there is a permutation $\pi \in S_g$ with $a_{p, i} = a_{q, \pi(i)}$. Therefore we have
\begin{equation} \label{eq4}
\sum_{i=1}^{g} (n_{p, i} - n_{p, \pi(i)}) a_{p, i} = 0.
\end{equation}
Since $a_{p, 1}, \dots, a_{p_g}$ are linearly independent (\ref{eq4}) implies $n_{p, i} = n_{p, \pi(i)}$ for all $1 \leq i \leq g$. Hence $x_{p, i} = x_{p, \pi(i)}$ for all $1 \leq i \leq g$, which contradicts to the condition (2) of Proposition \ref{prop_dec}.

For any $1 \leq r \leq k$ and $1 \leq i \leq g$ let $\alpha_{r, i}$ be a simple curve on $X_{r, i}$ with $[\alpha_{r, i}] = a_{r, i} \in \H$. Consider the oriented multicurve $A_r = \alpha_{r, 1} \cup \dots \cup \alpha_{r, g}$. By construction we have $A_r \in \M_0(x)$.

Proposition \ref{th1} implies that the group  $\H_{2g-3}(\Stab_{\K_g}(\hat{u} \cdot \hat{h}_r \cdot N), \Z)$ is generated by the primitive abelian cycles $\{\hat{u} \cdot \hat{h}_r \cdot \A_\T \; | \; \T \in \mathbf{T}_g\}$, therefore
for each  $u \in \U_g$ the homomorphisms (\ref{eq5}) can be decomposed as
\begin{equation} \label{eq6}
\H_{2g-3}(\Stab_{\K_g}(\hat{u} \cdot \hat{h}_r \cdot N), \Z) \rightarrow  \H_{2g-3}(\Stab_{\K_g}(A_r), \Z) \rightarrow \H_{2g-3}(\K_g, \Z).
\end{equation}
Consequently, there exists $\theta'_r \in \H_{2g-3}(\Stab_{\K_g}(A_r), \Z)$ which maps to $\theta_r$ under the second homomorphism in (\ref{eq6}).

Proposition \ref{prop1} implies that the inclusions $\Stab_{\K_g}(A_r) \subseteq \K_g$, $r \in \mathbb{N}$ induce the injective homomorhpism
\begin{equation*} \label{c-l2}
\bigoplus_{r \in \mathbb{N}} \H_{2g-3}(\Stab_{\K_g}(A_r), \Z) \hookrightarrow \H_{2g-3}(\K_g, \Z)
\end{equation*}
Consequently, (\ref{relation}) implies that we have $\sum_{r=1}^k \lambda_r \theta'_r = 0$ as an element of the direct sum $\bigoplus_{r \in \mathbb{N}} \H_{2g-3}(\Stab_{\K_g}(A_r), \Z)$.
Therefore, $\lambda_r = 0$ for all $r$, which gives a contradiction. \end{proof}

\section{Proof of Proposition \ref{th2}}

In this section we prove the following lemma, which implies Proposition \ref{th2}. Recall that $G_r$ is the subgroup of $\H_{2g-3}(\K_g, \Z)$ generated by the images of homomorphisms
\begin{equation*} 
\H_{2g-3}(\Stab_{\K_g}(\hat{u} \cdot \hat{h}_r \cdot N), \Z) \rightarrow \H_{2g-3}(\K_g, \Z), \; \; \; u \in \U_g.
\end{equation*}
\begin{lemma} \label{4lemma}
	Let $r \in \mathbb{N}$. Then the inclusions
	\begin{equation*} \label{inclu}
	\Stab_{\K_g}(\hat{u} \cdot \hat{h}_r \cdot N) \hookrightarrow \K_g, \; \; \; u \in \U_g
	\end{equation*}
	induce an injective homomorphism
	\begin{equation*} \label{eq7}
	\bigoplus_{u\in \U_g} \H_{2g-3}(\Stab_{\K_g}(\hat{u} \cdot \hat{h}_r \cdot N), \Z) \hookrightarrow G_r.
	\end{equation*}
\end{lemma}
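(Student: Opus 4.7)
The plan is to mimic the proof of Lemma \ref{lemma}: reduce the question to distinguishing $\K_g$-orbits of suitable non-separating multicurves and then invoke Proposition \ref{prop1} (or a variant of it). The new difficulty, compared to Lemma \ref{lemma}, is that all multicurves $\hat{u}\hat{h}_r\cdot N$ for $u\in\U_g$ correspond to the \emph{same} unordered symplectic splitting $\V_r$, so the purely homological argument used in Lemma \ref{lemma} to separate different $r$ does not separate different $u$. A finer, Torelli-level invariant will be needed.

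First I would fix $r$ and choose, once and for all, disjoint non-separating simple closed curves $\alpha_i\subset X_{r,i}$ whose primitive homology classes $a_i\in V_{r,i}$ are linearly independent and satisfy $\sum a_i=x\neq 0$. For each $u\in\U_g$ set
\[
A^u_r \;=\; \hat{u}(\alpha_1)\cup\dots\cup\hat{u}(\alpha_g).
\]
Since $\hat{u}\in\I_g$ acts trivially on $\H$, we have $[\hat{u}(\alpha_i)]=a_i$, so every $A^u_r$ is the support of a basic $1$-cycle for the \emph{same} class $x\in\H$; in particular $A^u_r\in\M_0(x)$.

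Next I would show that the homomorphism $\H_{2g-3}(\Stab_{\K_g}(\hat{u}\hat{h}_r\cdot N),\Z)\to \H_{2g-3}(\K_g,\Z)$ factors as
\[
\H_{2g-3}(\Stab_{\K_g}(\hat{u}\hat{h}_r\cdot N),\Z)\to \H_{2g-3}(\Stab_{\K_g}(A^u_r),\Z)\to \H_{2g-3}(\K_g,\Z).
\]
By Proposition \ref{th1} the source is generated by the primitive abelian cycles $\hat{u}\hat{h}_r\cdot \A_\T$, i.e.\ products of Dehn twists about the separating curves of $\hat{u}\hat{h}_r\Delta_\T$. Each such curve either bounds a torus $\hat{u}X_{r,i}$ or lies in the image of the central planar piece; in both cases it is disjoint from $A^u_r$, so these Dehn twists also stabilize $A^u_r$. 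The factorization follows.

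The main obstacle is Step 3: showing that $A^{u}_r$ and $A^{u'}_r$ lie in different $\K_g$-orbits whenever $u\neq u'$. If $\phi\in\K_g$ carried $A^{u}_r$ to $A^{u'}_r$, then $\psi=\hat{u'}^{-1}\phi\hat{u}\in\I_g$ would stabilize the multicurve $\alpha_1\cup\dots\cup\alpha_g$ and have Johnson image $\tau(\psi)=u-u'\neq 0$ in $\U_g$. To rule this out one cannot invoke a naive ``Torelli-fixes-multicurve implies Johnson kernel'' statement, since that is false in general. Instead the plan is to exploit an additional contractible complex (whose cells are \emph{pairs} consisting of a basic $1$-cycle support together with a compatible separating multicurve) on which $\K_g$ acts without rotations, and to run a spectral-sequence argument in the spirit of \cite{Bestvina} and Proposition \ref{prop1}. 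The point is that distinct $u\in\U_g$ produce distinct $\K_g$-orbits of pairs $(A^u_r,\hat{u}\hat{h}_r N)$, because the Johnson homomorphism, evaluated on any $\Mod$-element sending one pair to another, must recover the discrepancy $u-u'$. Verifying contractibility of this auxiliary complex and the required stabilizer/orbit computations is expected to be the most technical step, and is presumably the content of Section 5.

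Once Step 3 is in place, applying Proposition \ref{prop1} (or its variant for the auxiliary complex) to the family $\{A^u_r\}_{u\in\U_g}\subseteq\M_0(x)$ gives the injection
\[
\bigoplus_{u\in\U_g}\H_{2g-3}(\Stab_{\K_g}(A^u_r),\Z)\hookrightarrow\H_{2g-3}(\K_g,\Z),
\]
and composing with the factorization of Step 2 proves Lemma \ref{4lemma}.
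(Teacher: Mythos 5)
Your Steps 1--2 (the curves $\hat u(\alpha_i)$, the factorization through $\Stab_{\K_g}(A^u_r)$, and the recognition that purely homological invariants cannot separate different $u$) match the paper, and your Johnson-homomorphism mechanism --- an element carrying one configuration to another conjugates into $\Stab_{\I_g}(N)\subseteq\K_g$, forcing $\tau$ to vanish and hence $u=u'$ --- is exactly the orbit-separation idea the paper uses. But Step 3 has a genuine gap, in two respects. First, the goal as you state it --- that $A^u_r$ and $A^{u'}_r$ lie in distinct $\K_g$-orbits whenever $u\neq u'$ --- is simply false, for the reason you half-acknowledge: $\tau$ is nontrivial on $\Stab_{\I_g}(A_r)$ (e.g. bounding-pair maps supported in the complement of $A_r$), so for $\psi\in\Stab_{\I_g}(A_r)$ with $\tau(\psi)=v\neq 0$, the element $\phi=\hat u'\psi\hat u^{-1}$ with $u'=uv^{-1}$ lies in $\K_g$ and carries $A^u_r$ to $A^{u'}_r$. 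This makes your concluding paragraph inapplicable: Proposition \ref{prop1} requires the multicurves to lie in pairwise distinct $\K_g$-orbits, which the family $\{A^u_r\}_{u\in\U_g}$ does not satisfy, so the injection $\bigoplus_u \H_{2g-3}(\Stab_{\K_g}(A^u_r),\Z)\hookrightarrow \H_{2g-3}(\K_g,\Z)$ you invoke at the end does not exist.

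Second, your pivot to an ``auxiliary contractible complex of pairs'' on $\S_g$ is where all the real work lies, and you leave it as a black box; it is also not what the paper does. The paper (i) uses Proposition \ref{prop1} on $\B_g(x)$ only to reduce to the case where all the nonseparating multicurves coincide with a single $B$ (after adjusting the lifts $\hat u$), so that the whole problem moves inside $K=\Stab_{\K_g}(B)\hookrightarrow\PMod(\S_{0,2g})$ on the cut sphere; and (ii) constructs there a new object, the complex of relative cycles $\B_{0,2g}$, whose cells are multiarcs joining the punctures $p_i$ to $q_i$, proving its contractibility by a surgery/retraction argument (Theorem \ref{contr}), that the action is without rotations, and the bound $\dim(\sigma)+\cd(\Stab)\leq 2g-3$ via a planar-graph Euler-characteristic count (Theorem \ref{stabdim}). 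The objects whose orbits are separated by the Johnson-homomorphism argument are then the multiarcs $L'_s$ dual to $B$, and the injectivity comes from Proposition \ref{prop2} for this new complex. Without a precise definition of your pairs complex, a contractibility proof, a no-rotations statement, and the dimension inequality for its cell stabilizers, the top-degree injectivity your argument needs is unproven; deferring it to ``presumably the content of Section 5'' leaves the central step of the lemma unestablished.
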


\begin{proof}[Proof of Proposition \ref{th2}]
We can proof Proposition \ref{th2} for an arbitrary choice of $\hat{f}_s$, so we can assume that $\hat{f}_s = \hat{u} \cdot \hat{h}_r$ fore some $u \in \U_g$ and $r \in \mathbb{N}$.
Combining Lemmas \ref{lemma} and \ref{4lemma}, we obtain
\begin{equation} \label{eq8}
\bigoplus_{r \in \mathbb{N}} \bigoplus_{u\in \U_g} \H_{2g-3}(\Stab_{\K_g}(\hat{u} \cdot \hat{h}_r \cdot N), \Z) \hookrightarrow \bigoplus_{r \in \mathbb{N}}G_r \hookrightarrow \H_{2g-3}(\K_g, \Z).
\end{equation}
Then the sets $\{ \; | \; s \in \mathbb{N}\}$ and $\{\hat{u} \cdot \hat{h}_r \; | \; u \in \U_g, \; r \in \mathbb{N}\}$ coincide, so (\ref{eq8}) implies (\ref{eq0}).
\end{proof}

To prove Lemma \ref{4lemma} we need to construct a new $CW$-complex, which will be called the \textit{complex of relative cycles}. The idea is to introduce an analogue of $\B_g$ that makes sense for a sphere (i.e. $g=0$ case) with punctures.

\subsection{Complex of relative cycles}

Recall that by $\S_{0, 2g}$ we denote a sphere with $2g$ punctures. In order to construct the complex of relative cycles $\B_{0, 2g}$ we need to split the punctures into two disjoint sets: $P = \{p_1, \dots, p_g\}$ and $Q = \{q_1, \dots, q_g\}$.

By an  \textit{arc} on $\S_{0, 2g}$ we mean an embedded oriented curve with endpoints at punctures. By a \textit{multiarc} we mean a disjoint union of arcs (common endpoints are allowed). We always consider arcs and multiarcs up to an isotopy.

Denote by $\D$ the set of isotopy classes of arcs starting at a point in $P$ and finishing at a point in $Q$. \textit{Relative basic 1-cycle} is a formal sum $\gamma = \gamma_{1}+\dots+\gamma_{g}$ where $\gamma_i \in \D$ such that

(1) $\partial (\sum_{i=1}^{g}\gamma_{i}) = \sum_{i=1}^{g}(q_{i} - p_{i})$,

(2) the isotopy classes $\gamma_1, \dots, \gamma_g$ contain pairwise disjoint representatives.

The multiarc $\gamma_{1} \cup \dots \cup \gamma_{g}$ is called the \textit{support} of $\gamma$. 

Denote by $\mathcal{L}$ the set of multiarcs $L = \gamma_{1} \cup \dots \cup \gamma_n$ (for arbitrary $n$) satisfying the following property:

(i) for each $1 \leq i \leq s$ there exists a relative basic 1-cycle, whose support is contained in $L$ and contains $\gamma_i$.

For each $L \in \mathcal{L}$ we denote by $P_L \subset \mathbb{R}_{\geq 0}^{\D}$ the convex hull of all relative basic 1-cycles supported in $L$. We have that $P_L$ is a convex polytope. By definition the complex of relative cycles is the regular $CW$-complex given by $\B_{0, 2g} = \cup_{L \in \mathcal{L}} P_L$. Denote by $\mathcal{L}_0 \subseteq \mathcal{L}$ the set of supports of all relative basic 1-cycles. Then $\{P_L \; | \; L \in \mathcal{L}_0\}$ is the set of 0-cells of $\B_{0, 2g}$.

\begin{remark} \label{rem}
	By construction $\B_{0, 2g}$ is the subset of $\mathbb{R}_{\geq}^\D$ consisting of the points (formal sums) $\sum_{i=1}^n k_i \gamma_i$ where $\gamma_i \in \D$ and $k_i \in \mathbb{R}_{\geq 0}$ satisfying the following conditions:
	
	(1) $\partial (\sum_{i=1}^{n}k_i \gamma_{i}) = \sum_{i=1}^{g}(q_{i} - p_{i})$.
	
	(2) the isotopy classes $\gamma_1, \dots, \gamma_n$ contain pairwise disjoint representatives.
\end{remark}

\subsection{Contractability}

\begin{theorem} \label{contr}
	Let $g \geq 1$. Then $\B_{0, 2g}$ is contractible.
\end{theorem}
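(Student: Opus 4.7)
The proof will follow the template of Bestvina--Bux--Margalit's contractibility proof for $\B_g(x)$ (Theorem \ref{contrcyc}), adapted to the relative setting. The goal is to construct an explicit deformation retraction of $\B_{0,2g}$ onto a chosen $0$-cell. The description in Remark \ref{rem} of $\B_{0, 2g}$ as a subset of $\mathbb{R}_{\geq 0}^{\D}$ will be used throughout to make ``straight-line homotopies inside a cell'' meaningful.

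First I fix a reference relative basic $1$-cycle $\gamma^0 = \gamma^0_1 + \dots + \gamma^0_g$ whose support $L^0 = \gamma^0_1 \cup \dots \cup \gamma^0_g$ consists of $g$ pairwise disjoint embedded arcs, with $\gamma^0_i$ joining $p_i$ to $q_i$. Such an $L^0$ exists and cuts $\S_{0,2g}$ into a single disk. Next, I define a complexity $c: \mathcal{L} \to \Z_{\geq 0}$, where $c(L)$ is the total geometric intersection number of $L$ with $L^0$ measured on representatives in pairwise minimal position. The cells $P_L$ with $c(L)=0$ consist exactly of the multiarcs $L$ disjoint from $L^0$, and a short direct analysis on the sphere shows that every such $L$ is contained in $L^0$, so these cells all lie in the closed star of the vertex $\{\gamma^0\}$, which is contractible (even star-shaped in $\mathbb{R}^{\D}_{\geq 0}$).

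The core of the argument is a surgery lemma. Given $L$ with $c(L) > 0$, I pick an innermost point of intersection of some $\alpha \in L$ with some fixed $\gamma^0_i$, and perform a surgery of $\alpha$ across $\gamma^0_i$ at that point, obtaining a new arc $\alpha'$ and a new multiarc $L' = (L \setminus \alpha) \cup \alpha'$ with $c(L') < c(L)$. The key point is that $L \cup L'$ can be realized by pairwise disjoint representatives and still belongs to $\mathcal{L}$, so that $P_L$ and $P_{L'}$ are both faces of the higher-dimensional cell $P_{L \cup L'}$. Linear interpolation inside $P_{L \cup L'}$ therefore provides a straight-line homotopy pushing points of $P_L$ into $P_{L'}$, and iterating drives the complexity to zero. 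To prove that surgery lands in $\mathcal{L}$ one checks, from the definition of a relative basic $1$-cycle, that the $g$ endpoint-matching conditions at $P$ and $Q$ are preserved.

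The main obstacle will be combining these local moves into a single continuous global deformation retraction of $\B_{0,2g}$. Following the BBM strategy, this is done by performing the surgery \emph{simultaneously} at all intersections of $L$ with one distinguished component $\gamma^0_i$, rather than one intersection at a time. This guarantees that the surgery move depends only on $L$ (and the ordered choice of $i$), not on any auxiliary choices, and hence is compatible with the face structure: the resulting piecewise-linear flow on a cell $P_L$ restricts to the analogous flow on each face. Once continuity and well-definedness of this simultaneous surgery are verified, an induction on $c$ (the value of which drops by at least $1$ per step and is bounded on each cell) produces a deformation retraction of $\B_{0,2g}$ onto the vertex $\{\gamma^0\}$, proving contractibility. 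The technical heart of the argument therefore lies in the simultaneous-surgery construction and the verification that the resulting intermediate multiarcs stay in $\mathcal{L}$.
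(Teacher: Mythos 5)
Your proposal does not follow the paper's actual argument (which is the Bestvina--Bux--Margalit \emph{Surger/Drain} construction, not a surgery-flow induction), and as written it has two genuine gaps, one fixable and one fatal as stated. The fixable one: your base-case claim that every multiarc $L$ with $c(L)=0$ satisfies $L \subseteq L^0$ is false. The complement $\S_{0,2g}\setminus L^0$ is a connected planar surface in which all $2g$ punctures are accessible, so for instance the multiarc realizing the matching $p_1 \to q_2$, $p_2 \to q_1$, $p_i \to q_i$ ($i \geq 3$) is the support of a relative basic $1$-cycle disjoint from $L^0$ and sharing no component with it. Thus the subcomplex $\{c=0\}$ is much larger than the closed star of the vertex $\gamma^0$. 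This can be repaired: for $L$ disjoint from $L^0$ the union $L \cup L^0$ satisfies condition (i), so $P_{L\cup L^0}$ is a cell containing both $P_L$ and the vertex $\gamma^0$, and the straight line $t\gamma^0+(1-t)d$ stays in it; hence $\{c=0\}$ is star-shaped about $\gamma^0$, just not for the reason you gave.

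The fatal gap is that the ``technical heart'' you defer --- the simultaneous-surgery flow --- is precisely the entire content of the proof, and your sketch omits the two issues that make it nontrivial. First, points of $\B_{0,2g}$ are \emph{weighted} sums $\sum k_i\gamma_i$ constrained by $\partial(\sum k_i\gamma_i)=\sum(q_i-p_i)$; a surgery move must specify how the weights redistribute onto the new arcs so that this boundary condition is preserved and so that the assignment is continuous as weights degenerate to zero on faces of a cell. You never give such a rule, and without it there is no map at all, let alone a continuous one. Second, resolving all intersections of a weighted oriented $1$-cycle with $\gamma^0_i$ in the orientation-compatible way inevitably creates \emph{closed} components, which do not lie in $\D$; so the surgered object leaves the complex. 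This is exactly why the paper introduces the auxiliary complex $\widetilde{\B}_{0,2g}$ (allowing closed, even contractible, curves) together with the retraction $\Drain$, and why $\Surger$ is defined canonically via the circle-valued map $\phi$ built from the widths $k_i$, with the new weights appearing as lengths of intervals of regular values of $\phi$. Once that machinery is in place, no induction on intersection number is needed: the paper contracts in one step via $d \mapsto \Drain(\Surger(tc+(1-t)d))$, applying surgery to the straight-line segment in $\mathbb{R}_{\geq 0}^{\D}$ rather than to supports of cells. Your outline, by contrast, asserts continuity and well-definedness of a Hatcher-style flow without confronting the weight bookkeeping or the closed-curve problem, so it does not yet constitute a proof.
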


In our proof we follow ideas of \cite[Section 5]{Bestvina}.
Let us define an auxiliary complex $\widetilde{\B}_{0, 2g}$. Denote by $\widetilde{\D}$ the union of $\D$ and the set consisting of the isotopy classes of all oriented simple closed curves on $\S_{0, 2g}$ (including contractible curves). Let us define $\widetilde{\B}_{0, 2g}$ as the subset of $\mathbb{R}_{\geq 0}^{\widetilde{\D}}$ consisting of all points (formal sums) $\sum_{i=1}^n k_i \gamma_i$ where $\gamma_i \in \widetilde{D}$ and $k_i \in \mathbb{R}_{\geq 0}$ satisfying the following conditions:

(1) $\partial (\sum_{i=1}^{n}k_i \gamma_{i}) = \sum_{i=1}^{g}(q_{i} - p_{i})$.

(2) the isotopy classes $\gamma_1, \dots, \gamma_n$ contain pairwise disjoint representatives.

Remark \ref{rem} implies that $\B_{0, 2g} \subseteq \widetilde{\B}_{0, 2g}$. Denote by $\Drain: \widetilde{\B}_{0, 2g} \to \B_{0, 2g}$ the retraction induced by the natural projection $\mathbb{R}_{\geq 0}^{\widetilde{\D}} \to \mathbb{R}_{\geq 0}^\D$.

Let $d$ and $d'$ be two points of $\B_{0, 2g} \subseteq \mathbb{R}_{\geq 0}^{\D}$ and $t \in [0, 1]$. The point $c = td + (1-t)d' \in \mathbb{R}_{\geq 0}^{\D}$ mat not belong to $\B_{0, 2g}$, because the arcs can have intersection points. We now explain how to do surgery to convert $c$ into a point $\Surger(c) \in \widetilde{\B}_{0, 2g} \subseteq \mathbb{R}_{\geq 0}^{\widetilde{\D}}$, which is canonical up to isotopy.

Let $c = \sum_{i=1}^n k_i c_i$ where $c_i \in \D$ are in minimal position and $k_i \in \mathbb{R}_{\geq 0}$.  
We have $\partial (\sum_{i=1}^{n}k_i c_{i}) = \sum_{i=1}^{g}(q_{i} - p_{i})$.
Now it is convenient to replace the punctures $p_1, \dots, p_g, q_1, \dots, q_g$ by closed disks $P_1, \dots, P_g, Q_1, \dots, Q_g$. We thicken each $c_i$ to a rectangle $R_i = [0, 1] \times [0, k_i]$ of width $k_i$ with coordinates $x_i \in [0, 1]$ and $t_i \in [0, k_i]$ such that the curves $t_i = const$ for different $i$ are transversal to each other. We assume that the sides of $R_i$ given by $x=0$ and $x=1$ are subsets of $\partial P_a$ and $\partial Q_b$ respectively, where $\partial c_i = q_b - p_a$.  

For a path $\alpha: [0, 1] \to \S_{0, 2g}$ define $\mu_i(\alpha) = \int_{\alpha} dt_i$ and $\mu(\alpha) = \sum_{i = 1}^n \mu_i(\alpha)$. Here we assume that $dt_i = 0$ outside $R_i$. Let us fix an arbitrary point $y_0 \in \S_{0, 2g}$. For each point $y \in \S_{0, 2g}$ choose a path $\alpha_y$ from $y_0$ to $y$. Consider the map $\phi: \S_{0, 2g} \to S^1 = \mathbb{R}/ \Z$ given by $\phi(y) = \mu(\alpha_y) \mod 1$.

Let us check that the map $\phi$ is well defined. We have that $\phi(x)$ depends only on the homotopy class of $\alpha_x$. Therefore it suffices to check that $\mu(\partial P_i)\in \Z$ and $\mu(\partial Q_i) \in \Z$ for all $i$. This follows from the fact that $\partial (\sum_{i=1}^{n}k_i c_{i}) = \sum_{i=1}^{g}(q_{i} - p_{i})$.

The set of zeros of $d\phi$ is precisely $\S_{0, 2g} \setminus \cup_{i=1}^g R_i$, that is a finite disjoint union of connected open sets. Therefore the map $\phi$ has a finite number of critical values separating $S^1$ into a finite number of intervals $w_1, \dots, w_l$. For any $1 \leq j \leq l$ take any point $y_j \in w_j$. The preimage $\eta_j = \phi^{-1}(y_j) \subset \S_{0, 2g}$ is a smooth 1-dimensional oriented submanifold, where the orientation on $\eta_j$ is defined such that at each point of $\eta_j$ the vector $\frac{\partial}{\partial t_i}$ and the positive tangent vector to $\eta_s$ from a positive basis of the tangent space to the sphere. Moreover, $\eta_1, \dots, \eta_l$ are pairwise disjoint. Define $\Surger(c)$ as the formal sum $\sum_{j=1}^l |w_j| \eta_j$. 

We claim that $\Surger(c) \in \mathbb{R}_{\geq 0}^{\widetilde{\D}}$. It suffices to show that each connected component of $\eta_j$ is either closed or its initial point belongs to $\partial P_a$ for some $a$ and its terminal point belongs to $\partial Q_b$ for some $b$. This follows from the orientation argument. Indeed, for all $i$ the restrictions $\phi|_{\partial P_i}$ and $\phi|_{\partial Q_i}$ have degrees $-1$ and $1$ respectively. Hence $\phi|_{\partial P_i}$ can only contain initial point of components of $\eta_j$, while $\phi|_{\partial Q_i}$ can only contain terminal point of components of $\eta_j$. Consequently, no component of $\Surger(c)$ connects $\partial P_a$ with $\partial P_b$ or $\partial Q_a$ with $\partial Q_b$. Moreover, since the restrictions $\phi|_{\partial P_i}$ and $\phi|_{\partial Q_i}$ have degrees $-1$ and $1$ respectively we obtain $\partial (\Surger(c)) = \sum_{i=1}^{g}(q_{i} - p_{i})$, so $\Surger(c) \in \widetilde{\B}_{0, 2g}$.

\begin{proof}[Proof of Theorem \ref{contr}] Take a point $c \in \B_{0, 2g}$. Then the map
$$d \mapsto \Drain(\Surger(tc+(1-t)d))$$
is a deformation retraction from $\B_{0, 2g}$ to the point $c$.
\end{proof}

\subsection{Stabilizer dimensions}

\begin{prop}
	The group $\PMod(\S_{0, 2g})$ acts on $\B_{0, 2g}$ without rotations.
\end{prop}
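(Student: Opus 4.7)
To prove this, the plan is to show that if $h\in\PMod(\S_{0,2g})$ setwise stabilizes a cell $P_L$, then $h$ fixes the isotopy class of every arc component of the multiarc $L$. Once this is established the conclusion is immediate: the $\PMod(\S_{0,2g})$-action on $\mathbb{R}_{\geq 0}^{\D}$ is linear (it permutes coordinates), so fixing every arc of $L$ means fixing every coordinate indexed by an arc in $L$, hence fixing $P_L$ pointwise.

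I would first choose a representative homeomorphism $\phi$ of $h$ that preserves $L$ setwise on the nose (a standard fact about multi-$1$-submanifolds of surfaces), so that $\phi$ literally permutes the arc components of $L$. Since $\phi$ is pure, it fixes each puncture $p_1,\dots,p_g,q_1,\dots,q_g$ pointwise, so any two arcs swapped by $\phi$ must share the same ordered pair of endpoints. Fix such a pair $(p_a,q_b)$ and let $\gamma_{i_1},\dots,\gamma_{i_k}$ be the arcs of $L$ with endpoints $p_a,q_b$, realized as pairwise disjoint. It suffices to prove that $\phi$ fixes each of these arcs.

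The geometric heart of the argument is a cyclic-order consideration on the sphere. The union $\gamma_{i_1}\cup\dots\cup\gamma_{i_k}$ is a graph in $S^{2}$ with two vertices and $k$ edges, whose complement consists of $k$ open disks $F_1,\dots,F_k$, each bounded by two arcs consecutive in the cyclic order around $p_a$. Since the $\gamma_{i_l}$ are distinct components of a multiarc they are pairwise non-isotopic, so each face $F_l$ must contain at least one puncture from $(P\cup Q)\setminus\{p_a,q_b\}$; otherwise the two bounding arcs of a punctureless face would be isotopic through it. Being orientation-preserving and fixing $p_a$, the map $\phi$ preserves the cyclic order of the arcs around $p_a$ and therefore acts as a cyclic shift on $\{\gamma_{i_1},\dots,\gamma_{i_k}\}$ and correspondingly on $\{F_1,\dots,F_k\}$. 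A nontrivial shift would move some puncture inside $F_l$ into a distinct face $F_{l'}$, contradicting the fact that $\phi$ fixes every puncture pointwise. Hence the shift is trivial, $\phi$ preserves every arc of $L$ setwise, and the proposition follows.

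I expect the main subtlety to lie in the very first step, namely upgrading the mapping class $h$ to a representative homeomorphism that sends $L$ to $L$ as an actual submanifold rather than merely up to isotopy; this is standard but must be invoked carefully for multiarcs with common endpoints. The remainder is the natural analogue for punctured spheres and arcs of Ivanov's theorem on elements of $\I_g$ stabilizing multicurves, with orientation-preservation and the purity hypothesis doing all of the work.
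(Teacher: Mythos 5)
Your proof is correct and follows essentially the same route as the paper: purity forces any arcs permuted by $\phi$ to share the same endpoint pair, the non-isotopy of distinct components of $L$ traps at least one puncture in each complementary region between parallel arcs, and orientation-preservation plus the pointwise fixing of punctures then rules out a nontrivial permutation. The only cosmetic difference is that you organize the argument via the cyclic order of the whole fan of parallel arcs at $p_a$ (a rotation argument on the $k$ complementary faces), whereas the paper extracts just three arcs $\gamma_1,\gamma_2,\gamma_3$ with $\phi(\gamma_1)=\gamma_2$, $\phi(\gamma_2)=\gamma_3$ and compares the puncture sets of the two left-hand regions $W_1$, $W_2$.
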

\begin{proof}
	Assume the converse and consider an element $\phi \in \PMod(\S_{0, 2g})$ and a cell corresponding to a multiarc $\gamma = \gamma_1 \cup \dots \cup \gamma_s$ such that $\phi(\gamma_i) = \gamma_{\pi(i)}$ for a nontrivial permutation $\pi$. 
	Without loss of generality can assume that there exist arcs $\gamma_1, \gamma_2, \gamma_3$ from $p \in P$ to $q \in Q$ satisfying $\gamma_1 \neq \gamma_2$ and $\gamma_2 \neq \gamma_3$ (possibly $\gamma_1 = \gamma_3$), such that $\phi(\gamma_1) = \gamma_2$ and $\phi(\gamma_2) = \gamma_3$. 
	Denote by $W_1 \subset \S_{0, 2g}$ and $W_2 \subset \S_{0, 2g}$ the subsurfaces bounded by the loops $\gamma_1 \overline{\gamma}_2$ and $\gamma_2 \overline{\gamma}_3$ respectively (by $\overline{\gamma}_i$ we denote the arc $\gamma_i$ with opposite direction). We assume that $W_1$ and $W_2$ located on the left side of $\gamma_1 \overline{\gamma}_2$ and $\gamma_2 \overline{\gamma}_3$ respectively.
	
	By construction of $\B_{0, 2g}$ we see that $\gamma_1$ is not isotopic to $\gamma_2$, so $W_1$ contains a nonempty set of punctures $\varnothing \neq Z_1 \subset P \sqcup Q$. Define $\varnothing \neq Z_2 \subset P \sqcup Q$ in the similar way. Since  $\gamma_2$ separates $W_1$ from $W_2$ we have $Z_1 \neq Z_2$. The map $f$ preserves the orientation, therefore $f(W_1) = W_2$ and so $f(Z_1) = Z_2$. However, $f \in \PMod(\S_{0, 2g})$ preserves the punctures,so we come to a contradiction. 
\end{proof}

\begin{theorem} \label{stabdim}
	Let $\sigma$ be a cell of $\B_{0, 2g}$. Then
	\begin{equation*}\label{ineqdim}
	\dim(\sigma) + \cd(\Stab_{\PMod(\S_{0, 2g})}(\sigma)) \leq 2g-3.
	\end{equation*}
\end{theorem}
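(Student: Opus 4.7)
The plan is to follow the strategy of Bestvina--Bux--Margalit from \cite[Proposition~6.2]{Bestvina}, adapted to the arc setting. Write $\sigma = P_L$ for a multiarc $L = \gamma_1 \cup \dots \cup \gamma_n \in \mathcal{L}$, and let $G(L)$ denote the bipartite incidence graph on $P \sqcup Q$ with edge set $\{\gamma_i\}$; write $m_G$ for its number of connected components. Condition (i) of $\mathcal{L}$ forces every puncture to lie on some arc of $L$, and within every component $C$ of $G(L)$ one necessarily has $|P \cap C| = |Q \cap C|$, so $m_G \leq g$.

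First I would compute $\dim(\sigma)$ by analyzing the boundary map $\partial\colon \mathbb{R}^L \to \mathbb{R}^{P \sqcup Q}$ defining the affine hull of $P_L$; its rank is $2g - m_G$ (one loss per bipartite component), giving $\dim(\sigma) = n - 2g + m_G$. Next, cut $\S_{0,2g}$ along $L$ to obtain a compact planar surface $\widetilde{S}_L$ with components $T_1, \dots, T_{m_L}$, where $T_i$ has $b_i$ boundary circles and (since every puncture of $\S_{0, 2g}$ lies on some arc) no interior punctures. An arc-analogue of the Birman--Lubotzky--McCarthy sequence, with trivial kernel because arcs carry no Dehn twists, identifies $\Stab_{\PMod(\S_{0,2g})}(L)$ with $\Mod(\widetilde{S}_L, \partial \widetilde{S}_L)$. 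The standard calculation of the cohomological dimension of mapping class groups of planar compact surfaces rel boundary then gives
\[
\cd(\Stab(\sigma)) \;\leq\; \sum_{i=1}^{m_L} \max(0,\, 2 b_i - 3).
\]

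Euler characteristic bookkeeping supplies the needed identities: cutting along each arc raises $\chi$ by one, so $\chi(\widetilde{S}_L) = 2 - 2g + n$, and summing $\chi$ componentwise yields $\sum b_i = 2 m_L - 2 + 2g - n$. Combined with the planar Euler formula $V - E + F = 1 + m_G$ applied to $L \subset S^2$ (with $F = m_L$), this gives $m_L = n - 2g + 1 + m_G$ and $\sum b_i = n - 2g + 2 m_G$. Writing $d$ for the number of disk components of $\widetilde{S}_L$, direct substitution collapses the combined estimate to
\[
\dim(\sigma) + \cd(\Stab(\sigma)) \;\leq\; 2 m_G + d - 3.
\]

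The main obstacle is the remaining combinatorial inequality $2 m_G + d \leq 2g$. The abstract graph-theoretic statement fails in general, and the bound must exploit the fact that the arcs of $L$ are pairwise disjoint as embedded curves in $\S_{0, 2g}$. I would argue componentwise: for each $G$-component $C_j$ with $w_j = |P \cap C_j|$, a direct analysis of disjoint bipartite arc systems in the sphere shows that the contribution of $C_j$ to the disk count is at most $2(w_j - 1)$, since disjointness forbids more than a bounded number of innermost polygonal regions bounded by the arcs of $C_j$. Summing over $j$ gives $d \leq 2 \sum(w_j - 1) = 2g - 2 m_G$, which is exactly the inequality required. Combining this with the estimate above yields $\dim(\sigma) + \cd(\Stab(\sigma)) \leq 2g - 3$.
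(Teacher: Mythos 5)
Your proposal is correct and, up to notation, reproduces the paper's own proof through the main reduction: your dimension formula $\dim(\sigma)=n-2g+m_G$ is Lemma \ref{dim} ($\dim\sigma=E-V+C$), your identification of $\Stab_{\PMod(\S_{0,2g})}(\sigma)$ with the mapping class group of the cut surface rel boundary is Proposition \ref{mcg}, your bound $\cd\leq\sum_i\max(0,2b_i-3)$ is Corollary \ref{cd} (which records it as an equality, $\sum_i(2k_i-3)+D$), and your Euler bookkeeping collapses everything to $2m_G+d-3$ exactly as in the paper. One caution on the stabilizer step: this is not a formal consequence of a Birman--Lubotzky--McCarthy sequence with ``trivial kernel because arcs carry no Dehn twists.'' The nontrivial content is that the map $\prod_i\Mod(\widetilde{Y}_i)\to\Stab_{\PMod(\S_{0,2g})}(\Upsilon)$ is both surjective and injective --- boundary twists of the cut pieces map to twists about curves encircling nested components, so injectivity is a genuine assertion --- and the paper proves both directions with the Alexander method. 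Your one-line justification would need to be expanded to that argument.

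Where you genuinely diverge is the final combinatorial inequality $2m_G+d\leq 2g$ (the paper's Lemma \ref{disks}, $2C+D\leq 2g$): the paper argues by induction on the number of single-edge components, while you bound disk faces componentwise by $d_j\leq 2(w_j-1)$. Your claim is true, but your justification (``disjointness forbids more than a bounded number of innermost polygonal regions'') is not an argument and names the wrong mechanism: embedded disjointness alone does not suffice; what you need is bipartiteness together with the absence of isotopic arcs. The repair is short. A disk face of the full graph is adjacent to exactly one component $C_j$ and is precisely a face of $C_j$ alone containing no punctures (``empty''); an empty face has boundary walk of even length since arcs run from $P$ to $Q$, and length $2$ is impossible: two distinct parallel arcs bounding an empty lens would be isotopic, and a single arc traversed twice forces $C_j$ to be a single-edge component, whose face is empty only if the whole graph is that edge (excluded for $g\geq 2$). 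Hence $2E_j\geq 4d_j+2(F_j-d_j)$, and Euler's formula for the connected $C_j$ gives $F_j=E_j-2w_j+2$, so $d_j\leq E_j-F_j=2w_j-2$; summing yields $d\leq 2g-2m_G$. With this patch your route is arguably more robust than the paper's: it is local to each component and needs no induction, whereas the base case of Lemma \ref{disks} asserts that every face meets each adjacent component along at least four edge-traversals, which as literally stated can fail --- e.g.\ a two-arc component joining $p_1$ to $q_1$, met by a non-disk face along the length-two walk $e_1e_2$, with other components nested in both complementary lenses so that $e_1,e_2$ are non-isotopic --- a configuration your per-component count handles automatically while still giving $2m_G+d\leq 2g$.
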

\begin{proof}
	The cell $\sigma$ is given by a multiarc $\gamma_1 \cup \dots \cup \gamma_E$. Consider the planar graph $\Upsilon$ on the sphere with the vertices $p_1, \dots, p_g, q_1, \dots, q_g$ and the edges $\gamma_1, \dots, \gamma_E$. It is convenient for us to denote the number of vertices by $V = 2g$. Also let us denote by $C$ number of the connected components of $\Upsilon$ and by $F$ number of its faces (i.e. number of connected components of $\S_{0, 2g} \setminus \Upsilon$). 
\begin{lemma} \label{dim}
	\begin{equation} \label{dim1}
	\dim(\sigma) = \dim(\H_{1}(\Upsilon, \mathbb{R})) = E - V + C.
	\end{equation}
\end{lemma}
\begin{proof}
	The condition $\partial (\sum_{i=1}^{E} k_i \gamma_i) = \sum_{i=1}^{g}(q_{i} - p_{i})$ is a nonhomogeneous  system of linear equation in $\mathbb{R}^{E}$. The affine space of its solutions has the same dimension as the space of solutions of the homogeneous system $\partial (\sum_{i=1}^{E} k_i \gamma_i) = 0$. This space is precisely $\H_{1}(\Upsilon, \mathbb{R})$. The cell $\sigma$ is given by the intersection of this affine space with $\mathbb{R}_{\geq 0}^E$. Condition (i) in the construction of $\B_{0, 2g}$ implies that $\sigma$ contains a point in the interior of $\mathbb{R}_{\geq 0}^E$, therefore we have $\dim(\sigma) = \dim(\H_{1}(\Upsilon, \mathbb{R}))$. The second equality in formula (\ref{dim1}) is trivial.
\end{proof}

Denote by $Y_1, \dots, Y_F$ the connected components of $\S_{0, 2g} \setminus \Upsilon$. We have $Y_i \cong \S_{0, k_i}$ for some $k_i$. Recall that by $\S_0^{k}$ we denote the sphere with $k$ boundary components.
\begin{prop} \label{mcg}
 $\Stab_{\PMod(\S_{0, 2g})}(\sigma) \cong \Mod(\S_0^{k_1}) \times \dots \times \Mod(\S_0^{k_F})$.
\end{prop}
\begin{proof}
	By construction we have $\Stab_{\PMod(\S_{0, 2g})}(\sigma) = \Stab_{\PMod(\S_{0, 2g})}(\Upsilon)$
	Denote by $\overline{Y}_i$ the closure of $Y_i = \S_{0, k_i}$ in the sphere. Let $\widetilde{Y}_i \cong \S_{0}^{k_i}$ be the compactification of $Y_i$ given by replacing each puncture by a boundary component. Let $p_i: \widetilde{Y}_i \to \overline{Y}_i$ be the natural projection. Then we have the corresponding mapping $\Phi_i: \Mod(\widetilde{Y}_i) \to \Stab_{\PMod(\S_{0, 2g})}(\Upsilon)$.
	It suffices to prove that the obvious mapping
	\begin{equation*}
	\Phi : \Mod(\widetilde{Y}_1) \times \dots \times \Mod(\widetilde{Y}_F) \to \Stab_{\PMod(\S_{0, 2g})}(\Upsilon)
	\end{equation*} 
	is an isomorphism. We use the Alexander method (see \cite[Proposition 2.8]{Primer}). In the proof we need to distinguish between mapping classes and their representatives. The mapping class of a homeomorphism $\psi$ is denoted by $[\psi]$. 
	
	First we prove the surjectivity of $\Phi$. Let $[\psi] \in \Stab_{\PMod(\S_{0, 2g})}(\Upsilon)$. Then $\psi(\delta)$ is isotopic to $\delta$ for each arc $\delta$ of $\Upsilon$. All such arcs are disjoint, so the Alexander method implies that there is a representative $\psi' \in [\psi]$ such that $\psi'(\delta) = \delta$ for each arc $\delta$ of $\Upsilon$. Denote $\phi'_i = \psi'|_{\overline{Y}_i}$. Since $\phi'_i$ is identical on $\partial \overline{Y}_i$ there exist $\phi_i \in \Homeo^+(\widetilde{Y}_i)$ such that $p_i \circ \phi_i = \phi'_i \circ p_i$. Hence we have $\Phi([\phi_{1}], \dots, [\phi_{F}]) = [\psi]$.
	
	Now we prove that $\Phi$ is injective. Let $\Phi ([\psi_{1}], \dots, [\psi_{F}]) = [\id]$. Since for each $i$ the mapping $\psi_{i}|_{\partial \widetilde{Y}_i}$ is identical, there exists $\psi'_i \in \Homeo^+(\overline{Y}_i)$ such that $p_i \circ \psi_i = \psi'_i \circ p_i$. Consider the mapping $\psi' \in \Homeo^+(\S_{0, 2g})$ such that $\psi'|_{\overline{Y}_i} = \psi'_i$ for all $i$. By assumption $\psi'$ is isotopic to the identity map.
	
	Let $\widetilde{\Upsilon}$ be a planar graph on the sphere obtained from $\Upsilon$ by adding several arcs such that each face of $\widetilde{\Upsilon}$ is a disk.
	Let us show that there is an isotopy $\Psi_t: \S_{0, 2g} \to \S_{0, 2g}$ with $\Psi_0 = \psi'$ such that $\Psi$ restricts to the identity on $\Upsilon$ and $\Psi_1(\psi'(\delta)) = \delta$ for each arc $\delta$ of $\widetilde{\Upsilon}$. It suffices to prove the existence of such isotopy in the case when we add only one arc $\gamma$ to $\Upsilon$. Let $\Upsilon' = \Upsilon \cup \{\gamma\}$. We can assume that $\psi'(\gamma)$ is transversal to $\gamma$. If $\psi'(\gamma)$ is disjoint from $\gamma$ then these two arcs bound a disk on $\S_{0, 2g}$. This disk is contains no punctures, so it is disjoint from $\Upsilon$. Hence in this case such un isotopy exists. If $\psi'(\gamma)$ and $\gamma$ intersect, they form a bigon (see \cite[Proposition 1.7]{Primer}) that is also disjoint from $\Upsilon$ by the same reason. Hence we can decrease the number of intersection points of $\gamma$ and $\psi'(\gamma)$.
	
	Denote $\phi' = \Psi_1$, $\phi'_i = \phi'|_{\overline{Y}_i}$ and $\Psi'_i = \Psi|_{\overline{Y}_i}$. There exist the homeomorphisms $\phi_i \in \Homeo(\widetilde{Y}_i)$ and the isotopies $\Psi_i$ of $\widetilde{Y}_i$ such that $p_i \circ \phi_i = \phi'_i \circ p_i$ and $p_i \circ \Psi_i = \Psi'_i \circ p_i$. Therefore $\Psi_i$ is an isotopy between $\psi_i$ and $\phi_i$. By construction $\phi_i$ is identical on a collection of arcs that fill $\widetilde{Y}_i$ (\textit{fill} mean that each connected component of the complement to this collection is a disk). Hence the Alexander method implies that $\phi_i$ is isotopic to identity for each $i$. Therefore $\psi_i$ is also isotopic to identity. This concludes the proof.
\end{proof}

For $k \geq 2$ we have $\Mod(\S_{0, k-1}^1) \cong \PB_{k-1}$. If we replace the punctures on the disk $S_0^1$ by boundary components, the correspondent mapping class groups will by related to each other via the following exact sequence (see \cite[Proposition 3.19]{Primer})
$$1 \to \Z^{k-1} \to \Mod(\S_0^{k}) \to \Mod(\S^1_{0, k-1}) \to 1.$$
Since the tangent bundle to the disk is trivial, this sequence splits.
Therefore we have
$\Mod(\S_0^{k}) \cong \Z^{k-1} \times \PB_{k-1}$. Since $\cd(\PB_{k-1}) = k-2$ then $\cd(\Z^{k-1} \times \PB_{k-1}) = 2k-3$. In the case $k=1$ we have $\cd(\Mod(\S_0^{1})) = 0$. Denote by $D$ the number of $Y_i$ that are homeomorphic to the disk. Proposition \ref{mcg} immediately implies the following result.
\begin{corollary} \label{cd}
	$\cd(\Stab_{\PMod(\S_{0, 2g})}(\sigma)) = \sum_{i=1}^F(2k_i - 3) + D.$
\end{corollary}

Let us finish the proof of Theorem \ref{stabdim}. By Lemma \ref{dim} and Corollary \ref{cd} we have
$$
\dim(\sigma) + \cd(\Stab_{\PMod(\S_{0, 2g})}(\sigma)) = E - V + C + \sum_{i=1}^F(2k_i - 3) + D = $$
$$= E-V+C+D-3F + 2\sum_{i=1}^F k_i.$$
Let $\Theta_1, \dots, \Theta_C$ be the connected components of $\Upsilon$.
Note that
\begin{equation} \label{eqq}
\sum_{i=1}^{F} k_{i} = \Bigl|\{ (Y_{i}, \Theta_{j}) \; | Y_{i} \mbox{ is adjacent to } \Theta_{j}\}\Bigr| = \sum_{j=1}^{C} (\dim(H_{1}(\Theta_j, \mathbb{R})) + 1) =
\end{equation}
$$= \dim(H_{1}(\Upsilon, \mathbb{R})) + C = E - V + 2C.$$
Therefore, we have
$$E-V+C+D-3F + 2\sum_{i=1}^F k_i = E-V+C+D-3F + 2(E - V + 2C) = $$
$$= 3E - 3V +5C - 3F + D = 2C + D -3(V-E+F-C).$$
By Euler's formula we have
\begin{equation}\label{fact}
V-E+F-C = 1.
\end{equation}
Therefore  
\begin{equation} \label{eq}
\dim(\sigma) + \cd(\Stab_{\PMod(\S_{0, 2g})}(\sigma)) \leq 2C+D-3.
\end{equation}

In order ti finish the proof of Theorem \ref{stabdim} we need the following result.
\begin{lemma}\label{disks}
	Let a planar graph $\Upsilon$ represent a cell of $\B_{0, 2g}$ and $g \geq 2$.
	Then $2C + D \leq 2g$.
\end{lemma}
\begin{proof}
	We prove the lemma by induction on the number of connected components of $\Upsilon$ with only one edge.
	
	\textit{Base case: $\Upsilon$ does not have a connected component with only one edge.}
	Since $D \leq F$ and $V = 2g$ it suffices to check that 
	\begin{equation}\label{ineq1}
	2C + F \leq V.
	\end{equation}
	
	Note that $\Upsilon$ is a bipartite graph and does not contain isotopic edges. Since $\Upsilon$ does not have a connected component with only one edge, if $Y_{i}$ adjacent to $\Theta_{j}$ for some $i$ and $j$, then $Y_{i}$ adjacent to at least $4$ edges of $\Theta_{j}$. Then by (\ref{eqq}) we have
	$$E \geq 2\sum_{i=1}^{F} k_{i} = 2E - 2V + 4C = 2C + 2F - 2.$$
	The last equality follows from (\ref{fact}). Since $C \geq 1$ we have
	$$E \geq 2C + 2F - 2 \geq 2F + C - 1.$$
	We can rewrite this as follows.
	\begin{equation} \label{ineq3}
	2C + F \leq 1 + C - F + E.
	\end{equation}
	Fact \ref{fact} implies that the right hand side of (\ref{ineq3}) equals $V$. Therefore inequality (\ref{ineq1}) holds.
	
	\textit{Induction step: $\Upsilon$ has a connected component with only one edge.} In the case $g = 2$ the graph $\Upsilon$ is disjoint union of two closed intervals, so we have $C=2$ and $D=0$; in this case the required inequality  
	$2C + D \leq 2g$
	is obvious. Hence we can assume that $g\geq 3$.
	Let $p_i$ and $q_j$ form such a component, that is, $p_i$ and $q_j$ are vertices of $\Upsilon$ of degree one connected by an edge $\alpha$. Assume that after removing this component the remaining graph $\Upsilon_1$ will not contain isotopic edges (and, consequently, will represent some cell of $\B_{0, 2g-2}$).
	Then $C_1=C-1$ is the number of connected components of $\Upsilon_1$. Denote by $D_1$ the number of faces of $\Upsilon_1$ homeomorphic to the disk. We have $D_1 \leq D+1$, since at most one disk can appear. The graph $\Upsilon_1$ has less connected components with only one edge than $\Upsilon$. Since $g-1 \geq 2$, by the induction assumption we have
	$$2C + D \leq 2C_1 + D_1 + 1 \leq 2g-2+1 < 2g.$$
	 
	 Now assume that our previous assumption does not hold, that is, after removing the component consisting of one edge the remaining graph will contain isotopic edges. This means that there exist punctures $p_r$, $q_s$ and edges $\beta_1$, $\beta_2$ between them, such that $p_i$ and $q_j$ are the only vertices of $\Upsilon$ located inside of the disks bounded by $\beta_1$ and $\beta_2$.
	 There exist an arc $\gamma_1$ from $p_i$ to $q_s$ and an arc $\gamma_2$ from $p_r$ to $q_j$, such that $\gamma_1$ and $\gamma_2$ are disjoint from $\Upsilon$ and from each other. Consider the graph $\Upsilon'$ obtained from $\Upsilon$ by adding the edges $\gamma_1$ and $\gamma_2$. Note that $\Upsilon'$ has less connected components with exactly one edge than $\Upsilon$ and also represents a cell of $\B_{0, 2g}$.
	 Then $C' = C-1$ is the number of connected components of $\Upsilon'$ and $D' = D+2$ is the number of faces of $\Upsilon'$ homeomorphic to the disk. Therefore we have $2C+D \leq 2g$ if and only if $2C' + D' \leq 2g$. The induction assumption concludes the proof.
\end{proof}

Lemma \ref{disks} and inequality (\ref{eq}) imply that 
$$\dim(\sigma) + \cd(\Stab_{\PMod(\S_{0, 2g})}(\sigma)) \leq 2g-3.$$
This completes the proof of Theorem \ref{stabdim}.
\end{proof}

\subsection{The spectral sequence}

Let $K \subseteq \PMod(\S_{0, 2g})$ be a subgroup. Denote by $\widehat{E}_{*, *}^{*}$ the spectral sequence (\ref{spec_sec}) for the action of $K$ on $\B_{0, 2g}$. Since cohomological dimension is monotonous, Theorem \ref{stabdim} implies that for any cell $\sigma$ of $\B_{0, 2g}$ we have 
\begin{equation*} \label{cdineq}
\dim(\sigma) + \cd(\Stab_{K}(\sigma)) \leq 2g-3.
\end{equation*}
This immediately implies $\widehat{E}^1_{p, q} = 0$ for $p+q > 2g-3$. Hence all differentials $d^1, d^2, \dots$ to the group $\widehat{E}^1_{0, 2g-3}$ are trivial (Fig. \ref{spectral} is also applicable here, the group $\widehat{E}_{0, 2g-3}^1$ is shown in green), so $\widehat{E}^1_{0, 2g-3} = \widehat{E}^{\infty}_{0, 2g-3}$. Therefore we have the following result.
\begin{prop} \label{prop2}
	Let $\mathfrak{L} \subseteq \mathcal{L}_{0}$ be a subset consisting of multiarcs from pairwise different $K$-orbits. Then the inclusions $\Stab_{K}(L) \subseteq K$, $L \in \mathfrak{L}$ induce the injective homomorhpism
	\begin{equation*} \label{c-l3}
	\bigoplus_{L \in \mathfrak{L}} \H_{2g-3}(\Stab_{K}(L), \Z) \hookrightarrow \H_{2g-3}(K), \Z).
	\end{equation*}
\end{prop}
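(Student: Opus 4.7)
The plan is to finish the argument already sketched in the paragraph preceding the statement. What remains to do is (i) identify $\widehat{E}^1_{0,\,2g-3}$ explicitly with the direct sum appearing in the conclusion, (ii) translate the equality $\widehat{E}^1_{0,\,2g-3}=\widehat{E}^\infty_{0,\,2g-3}$ into an honest injection of this group into $\H_{2g-3}(K,\Z)$, and (iii) verify that the resulting embedding agrees with the map induced by the inclusions $\Stab_K(L)\hookrightarrow K$.

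For (i) I would use the general formula for the $E^1$-page of the equivariant spectral sequence (\ref{spec_sec}): since $K$ acts on $\B_{0,2g}$ without rotations,
$$\widehat{E}^1_{0,\,2g-3}\;\cong\;\bigoplus_{L\in \mathcal{L}_0/K}\H_{2g-3}\bigl(\Stab_K(L),\Z\bigr),$$
because the $0$-cells of $\B_{0,2g}$ are precisely the vertices $P_L$ indexed by $L\in\mathcal{L}_0$. A subset $\mathfrak{L}\subseteq\mathcal{L}_0$ of representatives of pairwise distinct $K$-orbits therefore corresponds to a direct summand of $\widehat{E}^1_{0,\,2g-3}$.

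For (ii) I would invoke convergence of (\ref{spec_sec}) with the column filtration on $\H_{2g-3}(K,\Z)$: since $F_{-1}\H_{2g-3}(K,\Z)=0$ and $F_0/F_{-1}=\widehat{E}^\infty_{0,\,2g-3}$, the identification $\widehat{E}^1_{0,\,2g-3}=\widehat{E}^\infty_{0,\,2g-3}$ already established in the preceding paragraph yields an injection
$$\bigoplus_{L\in\mathcal{L}_0/K}\H_{2g-3}\bigl(\Stab_K(L),\Z\bigr)\;\hookrightarrow\;\H_{2g-3}(K,\Z).$$
Restricting to the summands indexed by $\mathfrak{L}$ gives an injective homomorphism with the required source.

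For (iii) I would argue by naturality of the spectral sequence: the edge homomorphism $\widehat{E}^\infty_{0,\,2g-3}\hookrightarrow \H_{2g-3}(K,\Z)$ is, by the standard construction of (\ref{spec_sec}) from the double complex $C_*(\B_{0,2g},\Z)\otimes_K \R_*$, induced on homology by the inclusion of $\Stab_K(L)$-chain complexes $\Z\otimes_{\Stab_K(L)}\R_*\to\Z\otimes_K\R_*$; this is precisely the map $\H_{2g-3}(\Stab_K(L),\Z)\to\H_{2g-3}(K,\Z)$ arising from the subgroup inclusion. The ``no rotations'' property is what ensures there is no orientation twist in this identification. The main obstacle I foresee is bookkeeping in step (iii): one needs to check that the edge map of (\ref{spec_sec}) is genuinely induced by the subgroup inclusion (rather than differing by an automorphism of each stabilizer); once this is in hand, steps (i) and (ii) are routine spectral-sequence arguments.
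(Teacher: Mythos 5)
Your proposal is correct and takes essentially the same route as the paper: the paper deduces the proposition from $\widehat{E}^1_{p,q}=0$ for $p+q>2g-3$ (Theorem \ref{stabdim} plus monotonicity of $\cd$), so that $\widehat{E}^1_{0,2g-3}=\widehat{E}^\infty_{0,2g-3}$ is the bottom step of the filtration on $\H_{2g-3}(K,\Z)$ and hence injects. Your steps (i)--(iii) merely spell out the standard identifications (the $E^1$-column over the $0$-cells $P_L$, $L\in\mathcal{L}_0$, and the edge homomorphism being induced by the inclusions $\Stab_K(L)\hookrightarrow K$) that the paper leaves implicit, exactly as in the analogous Proposition \ref{prop1}.
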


\begin{proof}[Proof of Lemma \ref{4lemma}] It suffices to prove that the inclusions 
\begin{equation*} \label{inclu2}
j_u: \Stab_{\K_g}(\hat{u} \cdot N) \hookrightarrow \K_g, \; \; \; u \in \U_g
\end{equation*}
induce the injective homomorphism
\begin{equation*} \label{eq9}
\bigoplus_{u\in \U_g} \H_{2g-3}(\Stab_{\K_g}(\hat{u} \cdot N), \Z) \hookrightarrow G_1 \subseteq \H_{2g-3}(\K_g, \Z).
\end{equation*}

Assume the converse and consider a nontrivial linear relation
\begin{equation} \label{relation2}
\sum_{s=1}^k \lambda_s (j_{u_s})_*(\theta_s) = 0, \; \; \lambda_s \in \Z, \; \theta_s \in \H_{2g-3}(\Stab_{\K_g}(\hat{u}_s \cdot N), \Z)
\end{equation}
for some pairwise different $u_1, \dots, u_s \in \U_g$. 
For each $i = 1, \dots, g$ take an essential simple closed curve $\beta_{i} = \beta_{1, i}$ on the one-punctured torus $X_i$. Denote by $b_i = [\beta_{1, i}] \in H_{1}(\S_{g}, \Z)$ the corresponding homology class. For each $s \in \N$ denote by $\hat{X}_{s, i} \subset \S_{g}$ the one-punctured torus bounded by $\hat{u}_s \cdot \delta_{i}$. Since $\hat{u}_s$ belongs to the Torelli group $\I_g$ we have $\H_{1}(\hat{X}_{s, i}, \Z) = \H_{1}(\hat{X}_{t, i}, \Z)$ for all $1 \leq s, t \leq k$. Denote by $\beta_{s, i}$ a unique curve on $\hat{X}_{s, i}$ representing the homology class $b_i$.

Let $B_s = \beta_{s, 1} \cup \dots \cup \beta_{s, g}$. Let $\{B_{d_1}, \dots, B_{d_l}\} \subseteq \{B_1, \dots, B_k\}$ be the maximal subset consisting of the multicurves from pairwise distinct $\K_g$-orbits.
Take the homology class $x = \sum_{i=1}^g b_i$ and consider the complex of cycles $\B_g(x)$.
Proposition \ref{prop1} implies that the inclusions
\begin{equation*}
\iota_i: \Stab_{\K_g}(B_{d_i}) \hookrightarrow \K_g
\end{equation*}
induce the injective homomorhpism
\begin{equation} \label{inc3}
\bigoplus_{i=1}^l \H_{2g-3}(\Stab_{\K_g}(B_{d_i}), \Z) \hookrightarrow \H_{2g-3}(\K_g, \Z).
\end{equation}
Since the curves $\beta_{s, i}$ can be chosen in a unique way, we have the inclusions
\begin{equation*}
\hat{j}_{u_s}: \Stab_{\K_g}(\hat{u}_s \cdot N) \hookrightarrow \Stab_{\K_g}(B_s).
\end{equation*}
Since $j_{u_{i}} = \iota_{i} \circ \hat{j}_{u_{i}}$ (\ref{inc3}) and (\ref{relation2}) imply that for each $i = 1, \dots, l$ we have
\begin{equation} \label{relation3}
\sum_{\{z \; | \; B_z \in \Orb_{\K_g}(B_{d_i})\}} \lambda_z (j_{u_z})_*(\theta_z) = 0.
\end{equation}

Equality (\ref{relation3}) implies that it is sufficient to prove the statement of the lemma in case where the multicurves $B_1, \dots, B_k$ belong to the same $\K_g$-orbit. Since we can prove Lemma \ref{4lemma} of an arbitrary choice of $\hat{U}$, then by choosing the lifts $\hat{u}$ we can assume that $B_1 = \dots = B_k = B$. Let $\zeta_{s, i}$ be a curve on $\hat{X}_{s, i}$ intersecting $\beta_i$ once and let $L_s = \zeta_{s, 1} \cup \dots \cup \zeta_{s, g}$. Consider the surface $\S_{g} \setminus B \cong \S_{0, 2g}$. Denote by $p_i$ and $q_i$ the punctures on $\S_{0, 2g}$ corresponding to the two sides of the curve $\beta_i$.

Consider the exacts sequence (\ref{LB}) in the case $M = B$. We have
\begin{equation*} \label{LB2}
1 \rightarrow \left\langle T_{\beta_{1}}, \dots, T_{\beta_{g}} \right\rangle  \rightarrow \Stab_{\Mod(\S_{g})}(B) \rightarrow \Mod(\S_{0, 2g}) \rightarrow 1.
\end{equation*}
Since the intersection $\left\langle T_{\beta_{1}}, \dots, T_{\beta_{g}} \right\rangle \cap \K_g$ is trivial we have the inclusion $K = \Stab_{\K_g}(B) \hookrightarrow \Mod(\S_{0, 2g})$. The action of $\K_g$ on the homology of $\S_g$ is trivial, so the image of this inclusion is contained in $\PMod(\S_{0, 2g})$. We have $K \hookrightarrow \PMod(\S_{0, 2g})$. Denote by $\zeta'_{s, i}$ the arc on $\S_{0, 2g}$ from $p_i$ to $q_i$ corresponding to the curve $\zeta_{s, i}$ and let $L'_s = \zeta'_{s, 1} \cup \dots \cup \zeta'_{s, g}$. Let us show that $L'_1, \dots, L'_k$ belong to pairwise distinct $K$-orbits. 

Assume the converse, then $f(L'_1) = L'_2$ for some $f \in K$. Then $f(L_1 \cup B) = L_2 \cup B$. Note that the surface $\S_g \setminus (L_s \cup B)$ has $g$ punctures and each component of $\hat{u}_s \cdot N$ is homotopic into a neighborhood of its own puncture.
Therefore the correspondent components of the multicurves $f(\hat{u}_1 \cdot N)$ and $\hat{u}_2 \cdot N$ are homotopic into a neighborhood of the same puncture. Consequently, we have that the multicurves $f(\hat{u}_1 \cdot N)$ and $\hat{u}_2 \cdot N$ are isotopic. Since $\hat{u}_1, \hat{u}_2 \in \I_g$, we obtain $\hat{u}_2^{-1} f \hat{u}_1 \in \Stab_{\I_g}(N)$. It follows from the exactness of (\ref{exact}) that $\Stab_{\I_g}(N) \subseteq \K_g$. Hence we have $\hat{u}_2^{-1} f \hat{u}_1 \in \K_g$ and we obtain 
$$0 = \tau(\hat{u}_2^{-1} f \hat{u}_1) = \tau(\hat{u}_1) - \tau(\hat{u}_2),$$
where $\tau$ is the Johnson homomorphism. This implies $u_1 = u_2$; this contradiction proves.

Therefore $L'_1, \dots, L'_k$ belong to pairwise distinct $K$-orbits. Proposition \ref{prop2} implies that the inclusions $\Stab_{K}(L'_s) \subseteq K$, $L' \in \mathfrak{L}$ induce the injective homomorhpism
\begin{equation*} \label{c-l4}
\bigoplus_{s} \H_{2g-3}(\Stab_{K}(L'_s), \Z) \hookrightarrow \H_{2g-3}(K, \Z).
\end{equation*}
By Proposition \ref{prop1} we also have the inclusion $\H_{2g-3}(K, \Z) \hookrightarrow \H_{2g-3}(\K_g, \Z)$.
We have $\Stab_{K}(L'_s) = \Stab_{\K_g}(\hat{u}_s \cdot N)$. Therefore (\ref{relation2}) implies $\lambda_s = 0$ for all $s$. This contradiction proves Lemma \ref{4lemma}.
\end{proof}


\begin{thebibliography}{9}
	
	\bibitem{Arnold} V.I.Arnol'd. The cohomology ring of the colored braid group. Mathematical Notes of the Academy of Sciences of the USSR 5, 138–140 (1969). https://doi.org/10.1007/BF01098313
	
	\bibitem{Bestvina} M.Bestvina, K.-U.Bux, D.Margalit. The dimension of the Torelli group, J. Amer. Math. Soc. 23:1
	(2010), 61–105, arXiv:0709.0287.
	
	\bibitem{Lubotzky} J. S.Birman, A. Lubotzky, J.McCarthy, Abelian and solvable subgroups of the mapping class groups,
	Duke Math. J. 50:4 (1983), 1107–1120.
	
	\bibitem{Brown} K. S.Brown, Cohomology of groups, Graduate Texts in Mathematics, vol. 87, Springer-Verlag, New York, 1982.
	
	\bibitem{Church2} T.Church, M.Ershov, A.Putman. On finite generation of the Johnson filtrations. Journal of the European Mathematical Society (2021)., arXiv:1711.04779.
	
	\bibitem{Dimca1} A.Dimca, R.Hain, S.Papadima, The abelianization of the Johnson kernel, J. Eur. Math. Soc. 16:4
	(2014), 805–822, arXiv:1101.1392.
	
	\bibitem{Dimca2} A.Dimca, S.Papadima, Arithmetic group symmetry and finiteness properties of Torelli groups, Ann.
	Math. (2) 177:2 (2013), 395–423, arXiv:1002.0673.
	
	\bibitem{Ershov} M. Ershov, Sue He, On finiteness properties of the Johnson filtrations, Duke Math. J., 167:9 (2018),
	1713–1759, arXiv:1703.04190.
	
	\bibitem{Primer}  B. Farb, D. Margalit. A Primer on Mapping Class Groups.
	Princeton University Press, 2012.
	
	
	\bibitem{Gaifullin_J} A.A. Gaifullin, On the top homology group of Johnson kernel, Mosc. Math. J. 22:1 (2022)  arXiv:1903.03864.
	
	\bibitem{Gaifullin_T} A.A.Gaifullin, On infinitely generated homology of Torelli groups, arXiv:1803.09311.
	
	\bibitem{Gaifullin_T3} A.A.Gaifullin, "On spectral sequence for the action of genus $ 3$ Torelli group on the complex of cycles", IZV MATH, 2021, 85, arXiv:2011.00295.
	
	\bibitem{Hatcher} A.Hatcher, D.Margalit, "Generating the Torelli group." L’Enseignement Mathématique 58.1 (2012), 165-188.
	
	
	
	\bibitem{Ivanov} N.V. Ivanov, Subgroups of Teichmuller modular groups, Translations of Mathematical Monographs
	115, Amer. Math. Soc., 1992, xii+127 pp.
	
	
	\bibitem{Johnson2} D. Johnson. The structure of the Torelli group II: A characterization of the group generated by twists on bounding curves, Topology 24:2 (1985), 113–126.

	
	
	\bibitem{Mess} G.Mess, The Torelli groups for genus 2 and 3 surfaces, Topology 31:4 (1992), 775–790.
	

	
	\bibitem{Morita} S.Morita, T. Sakasai, M. Suzuki, Torelli group, Johnson kernel and invariants of homology spheres, Quantum Topology, 11:2 (2020), 379-410, 
	arXiv:1711.07855.
	


	
	
	

\end{thebibliography}
\end{document}